\titleformat{\section}{\center\normalfont\fontsize{13.5}{10}\bfseries}{\thesection}{0.5em}{}
\titleformat{\subsection}{\normalfont\fontsize{12}{17}\bfseries}{\thesubsection}{0.5em}{}
\newcommand\shorttitle{Deformations of Nearly $\G2$-instantons}
\newcommand\authors{Ragini Singhal}
\ifodd\value{page}
\authors
\shorttitle
\newtheorem{thm}{Theorem}[section]
\newtheorem{lemma}[thm]{Lemma}
\newtheorem{prop}[thm]{Proposition}
\theoremstyle{definition}
\newtheorem{rem}[thm]{Remark}
\numberwithin{equation}{section}
\def\R{\mathbb R}
\def\S{\slashed{\mathcal{S}}}
\def\O{\mathbb O}
\def\Z{\mathbb Z}
\def\C{\mathbb C}
\def\P{\mathcal P}
\def\d{\mathrm d}
\def\del{\nabla}
\def\G2{\mathrm{G}_2}
\def\g2{\varphi}
\def\red{\color{red}}
\def \m{\mathfrak{m}}
\def\h{\mathfrak{h}} 
\def\fg{\mathfrak{g}}
\def\id{\textup{id}}
\def\cas{\mathrm{Cas}}
\def\lieg2{\mathfrak{g}_2}
\DeclareMathOperator\vol{vol}
\newcommand\Ric{\mathrm{Ric}}
\DeclareFontFamily{U}{MnSymbolC}{}
\DeclareSymbolFont{MnSyC}{U}{MnSymbolC}{m}{n}
\DeclareFontShape{U}{MnSymbolC}{m}{n}{
    <-6>  MnSymbolC5
   <6-7>  MnSymbolC6
   <7-8>  MnSymbolC7
   <8-9>  MnSymbolC8
   <9-10> MnSymbolC9
  <10-12> MnSymbolC10
  <12->   MnSymbolC12}{}
\DeclareMathSymbol{\intprod}{\mathbin}{MnSyC}{'270}
\newcommand\blfootnote[1]{%
  \begingroup
  \renewcommand\thefootnote{}\footnote{#1}%
  \addtocounter{footnote}{-1}%
  \endgroup
}
\begin{document}

\title{Deformations of $\G2$-instantons on nearly $\G2$ manifolds}

\author{Ragini Singhal}

\date{\today}

\maketitle

\textbf{Abstract.} We study the deformation theory of $\G2$-instantons on nearly $\G2$ manifolds. There is a one-to-one correspondence between nearly parallel $\G2$ structures and real Killing spinors, thus the deformation theory can be formulated in terms of spinors and Dirac operators. We prove that the space of infinitesimal deformations of an instanton is isomorphic to the kernel of an elliptic operator. Using this formulation we prove that abelian instantons are rigid. Then we apply our results to describe the deformation space of the canonical connection on the four normal homogeneous nearly $\G2$ manifolds.  
\tableofcontents{}

\blfootnote{ Department of Pure Mathematics, University of Waterloo, Waterloo, ON N2L3G1, \\  email:\ r4singha@uwaterloo.ca} 
\vspace{1.3cm}

\section{Introduction}

Nearly parallel $\G2$ structures on a 7-manifold $M$ are defined by a so-called positive 3-form $\g2$. Such a $3$-form induces a metric $g$, an orientation and a spin structure on $M$ (see \textsection\ref{prelims}). We denote by $\del^g$ the Levi-Civita connection and its lift on the spinor bundle. The $\G2$-structure $\g2$ is nearly parallel if for some $\tau_0\neq 0$
 \begin{align*}
d\g2 & =\tau_0 *_\g2 \g2,
\end{align*}
or equivalently if there exists a real Killing spinor $\eta$ such that 
\begin{align*}
    \del^g_X\eta=-\frac{\tau_0}{8}X\cdot\eta.
\end{align*}

Nearly $\G2$ manifolds were introduced as manifolds with weak holonomy $\G2$ by Gray in \cite{Gray1971}. Some examples of such manifolds are the round and squashed $7$-spheres, the Aloff--Wallach spaces, and the Berger space $\rm{SO}(5)/\rm{SO}(3)$. The inclusion of the exceptional Lie group $\G2$ as a possible holonomy group for Riemannian manifolds in Berger's list \cite{Berger} led mathematicians to look for examples of manifolds with holonomy $\G2$. In \cite{Wang} Wang established the first correspondence between parallel spinors and integrable geometries. Later the classification of manifolds with real Killing spinors in \cites{Hijazi1,Fried-Kath-JDG,  Friedrich1990,Grunewald,Bar_realkilling} established a link between manifolds with weak holonomy and manifolds with real Killing spinors. These manifolds are Einstein with positive scalar curvature. Except for the round $7$-sphere, the dimension of the space of Killing spinors on a nearly $\G2$ manifold is 1,2 or 3 (see\cite{friedkath}) giving rise to three types: proper, Sasaki--Einstein and 3-Sasakian respectively. The cones over these manifolds have holonomy contained in $\textup{Spin}(7)$ which makes these spaces particularly important in the construction and understanding of manifolds with torsion free $\textup{Spin}(7)$-structures.

The correspondence between nearly parallel $\G2$ structures and Killing spinors has been extensively used to produce many results on nearly $\G2$ manifolds. The infinitesimal deformation space of nearly $\G2$-structures was explicitly described as an eigenspace of a Dirac operator in \cite{deformg2}. In the homogeneous setting, non-trivial  deformations were only found for the Aloff--Wallach space and which in \cite{dwivedi2020deformation} were proved to be obstructed.

The spinorial approach can also be used to study gauge theory on manifolds with weak holonomy. A connection $A$ on $M$ is a {\emph{$\G2$-instanton}} if its curvature $F$ satisfies the algebraic condition \begin{align*}
F\wedge \g2 = *_\g2  F,
\end{align*} or equivalently $F\cdot\eta=0$. In this article we describe the infinitesimal deformation space of instantons on nearly $\G2$ manifolds as the eigenspaces of the Dirac operators associated to the one parameter family of connections with skew-symmetric torsion
\begin{align*}
    \del^t_XY=\del^g_XY+\frac{t}{3}\g2(X,Y,\cdot),
\end{align*}
described in \cite{AgrFried,Agr-Evalues,Agr-Ferr,spinor_description,Agr-Holl}. At $t=-1$, the connection $\del^{-1}$ is the \textit{characteristic connection} which is a $\G2$-instanton. We explicitly describe the infinitesimal deformation space of the characteristic connections for the normal homogeneous nearly $\G2$ manifolds classified in \cite{friedkath}. In \cite{bendef} an analogous description for the infinitesimal deformation space of instantons on nearly K\"ahler 6-manifolds is given. On an oriented manifold with real Killing spinor $\eta$ the volume form $\vol$ defines a Killing spinor $\vol\cdot\eta$. On a nearly K\"ahler $6$-manifold  $\{\eta,\vol\cdot\eta\}$ defines a 2 dimensional space of Killing spinors whereas on a nearly $\G2$ manifold $\eta$ and $\vol\cdot\eta$ are linearly dependent. This prevents us from having a relation like in \cite{bendef}*{Proposition 4(iii)} which makes the computation of the infinitesimal deformation space much more convenient (See \ref{rem:difference}). In fact we show in \textsection\ref{instatonhomogeneous} that such a relation does not exist in the nearly $\G2$ case by explicitly computing the kernel of the elliptic operator for the homogeneous nearly $\G2$ manifolds. In \cite{driscoll2020deformations} the author uses the spinorial approach to describe the deformation space of instantons on asymptotically conical $\G2$ manifolds.

\medskip

 Let $M^7$ be a manifold with a $\G2$ structure $\g2$ and let $\eta$ be the Killing spinor associated to $\g2$. A connection $A$ on $M$ is a {\emph{$\G2$-instanton}} if its curvature $F_A$ satisfies the algebraic condition \begin{align*}
F_A\wedge \g2 = *_\g2  F_A.
\end{align*} The above condition is equivalent to $F_A\cdot\eta=0$ as shown in  \textsection \ref{instdef}. When the $\G2$ structure is parallel (the case when the constant $\tau_0 =0$) these instantons clearly solve the Yang--Mills equation $d_\del^* F =0$. The analogous result was proved in the nearly $\G2$ case by Harland--N\"olle \cite{nolle}. They showed that the instantons on manifolds with real Killing spinors solve the Yang--Mills equation which makes the study of instantons on nearly $\G2$ manifolds important from the point of view of gauge theory in higher dimensions.  However $\G2$-instantons in the parallel case are the minimizers of the Yang--Mills functional which is not necessarily true for the nearly parallel case, as proved by  Ball--Oliveira in \cite{gonball}. The first examples of $\G2$-instantons on parallel $\G2$ manifolds were constructed in \cite{clarke}, \cite{walpuski} and \cite{saearp-walpuski}. In \cite{gonball} the authors proved the existence of nearly $\G2$-instantons on certain Aloff--Wallach spaces and classified invariant $\G2$-instantons on these spaces with gauge group U(1) and SO(3).  Recently, Waldron \cite{waldron} proved that the pullback of the standard instanton on $S^7$ obtained from ASD instantons on the 4-sphere via the quaternionic Hopf fibration lies in a smooth, complete, 15-dimensional family of $\G2$-instantons.

\medskip

In \textsection\ref{prelims} we describe a $1$-parameter family of connections on the spinor bundle $\S$ over nearly $\G2$ manifolds and the associated Dirac operators. In \cite{dwivedi2020deformation} the authors introduced a Dirac type operator and used it to completely describe the cohomology of nearly $\G2$ manifolds and proved the obstructedness of infinitesimal deformations of the nearly $\G2$ structure on the Aloff--Wallach space. We remark that the Dirac type operator introduced there is not associated to any connection in the $1$-parameter family.

In \textsection\ref{instdef} we describe the deformation space of a nearly $\G2$ instanton $A$ as an eigenspace of a Dirac operator associated to $A$ and the characteristic connection (Theorem \ref{kernelelliptic}). Using this description, we show that on a compact nearly $\G2$ manifold the $\G2$-instanton $A$ is rigid if the structure group is abelian (cf. Theorem \ref{abelian}(i)) or if all the eigenvalues of a linear operator $L_A$ are greater than $-\frac{28}{5}$ (Theorem \ref{abelian}(ii)) . The instanton $A$ is also rigid if all the eigenvalues of $L_A$ are less than $6$, as shown in \cite{gonball}*{Proposition 8} where authors used a Weitzenb\"ock formula, while the proof of Theorem \ref{abelian}(ii) uses the Schr\"odinger--Lichnerowicz formula for the family of Dirac operators associated to $\del^t$ and $A$.

\medskip

In \textsection\ref{instatonhomogeneous} we describe the infinitesimal deformation space of the characteristic connection on all the homogeneous nearly $\G2$ manifolds whose nearly $\G2$ metric is normal. By considering the actions of the Lie groups $H$ and $\G2$ on $G/H$ we can view the characteristic connection as an $H$-connection or a $\G2$-connection. We compute its infinitesimal deformation spaces in both of these cases. The results are recorded in Theorem \ref{tabledeformationspace}. The deformations are shown to be genuine  in all cases except that of the Aloff--Wallach space $\frac{\rm{SU}(3)\times\rm{SU}(2)}{\rm{SU}(2)\times\rm{U}(1)}$ . In the latter case the author is currently unaware of any known family of nearly $\G2$-instantons  for which the infinitesimal deformations are the ones found in Theorem \ref{tabledeformationspace}.

\vspace{0.7cm} 
\noindent
\textbf{Acknowledgements.} The author would like to thank her supervisors Benoit Charbonneau and Spiro Karigiannis for the innumerable discussions, immense support and advice during the project. The author would also like to thank Gon\c calo Oliveira for recommending Theorem \ref{abelian}(ii), Uwe Semmelmann for his insights, Simon Salamon for the proof of Theorem \ref{squashed-genuine}, and Ilka Agricola for her suggestions which improved the article vastly. The author is also thankful to Shubham Dwivedi for many useful interactions regarding the project.

\medskip

\section{Preliminaries}\label{prelims}
\subsection{Nearly parallel $\G2$ structures}

Let $M$ be a $7$-dimensional Riemannian manifold equipped with a positive $3$-form  $\g2 \in \Omega^3_+(M) $. The $ 3 $-form $\g2$ induces an orientation and a metric on $M$ and thus a Hodge star operator $*_\g2$ on the space of differential forms (see \cite{Bryantholo}). The $\G2$ structure $\g2$ is called a nearly parallel $\G2$ structure on $M$ if it satisfies the following differential equation for some non-zero $ \tau_0 \in \R$, \begin{align}\label{dphi1}
d\varphi = \tau_0*_\g2 \g2.
\end{align} 
\noindent
We denote the $4$-form $*_\g2 \ \g2$ by $\psi$ in the remainder of this article. The condition $d \g2 = \tau_0 \psi$ implies $\d\psi =0$, thus the nearly parallel $\G2$ structure $\g2$ is  co-closed. 
\medskip

Every manifold with a $\G2$ structure is orientable and spin, and thus admits a spinor bundle $\S$. Let $\del^{LC}$ be the Levi-Civita connection of the induced metric on $M$. A spinor $\eta\in\Gamma(\S)$ is a real Killing spinor if for some non-zero $\delta\in\R$, 
\begin{align}
\label{killspin1} \del_X^{LC} \eta = \delta X \cdot \eta \ \ \ \ \ \ \  \forall \ X \in \Gamma (TM).
\end{align} 
There is a one-to-one correspondence between nearly parallel $\G2$ structures and real Killing spinors on $M$. Given a nearly parallel $\G2$ structure $\g2$ that satisfies \eqref{dphi1} there exists a real Killing spinor $\eta$ that satisfies \eqref{killspin1} with $\delta=-\frac{1}{8}\tau_0$ and vice-versa. Switching $-\frac{\tau_0}{8}$ to $\frac{\tau_0}{8}$ corresponds to changing the orientation of the cone $M\times_{r^2} \R^+$. See \cite{book} and \cite{Bar_realkilling} for more details.

The constant $\tau_0$ can be altered by rescaling the metric and readjusting the orientation. In this article we use $\tau_0=4$. With this choice of $\tau_0$ our nearly $\G2$ structure $\g2$ and Killing spinor $\eta$ satisfies the following equations respectively   \begin{align}
d\g2 &= 4\psi,\nonumber \\
\del_X^{LC} \eta &= -\frac{1}{2} X \cdot \eta  \label{killspin}.
\end{align}

Manifolds with nearly parallel $\G2$ structures have several nice properties which can be found in detail in \cite{book}. In particular they are positive Einstein. Let $g$ be the metric induced by $\g2$, then the Ricci curvature  $\Ric_g =\frac 38 \tau_0^2 g$ and the scalar curvature $\textup{Scal}_g = 7 \Ric_g = \frac{21}{8}\tau_0^2$. A $\G2$ structure on $M$ induces a splitting of the spaces of differential forms on $M$ into irreducible $\G2$ representations.  The space of $2$-forms $\Lambda^2(M)$ decomposes as  
\begin{align*}
\Lambda^2(M)&=\Lambda^2_7(M)\oplus \Lambda^2_{14}(M), 
\end{align*}
where $\Lambda^2_l$ has pointwise dimension $l$. More precisely, we have the following description of the space of forms :

 \begin{align*} 
 \Lambda^2_7(M) &=\{X\lrcorner \g2\mid X\in \Gamma(TM)\} = \{\beta \in \Lambda^2(M)\mid *(\g2\wedge \beta)=-2\beta\} , \\
 \Lambda^2_{14}(M) &=\{\beta \in \Lambda^2(M)\mid \beta \wedge \psi =0 \} = \{\beta\in \Lambda^2(M)\mid *(\g2\wedge \beta)=\beta\}. 
 \end{align*}
Note that we are using the convention of \cite{skflow} which is opposite to that of \cite{joycebook} and \cite{bryantrmks}.

The space $\Lambda^2_{14}$ is isomorphic to the Lie algebra of $\G2$ denoted by $\mathfrak{g}_2$. Since the group $\G2$ preserves the $\G2$ structure $\g2$, it preserves the real Killing spinor $\eta$ induced by $\g2$. The space $\Lambda^2_{14}$ can be equivalently defined as\begin{align}\label{instantoneq3}
  \Lambda^2_{14} &=\{\omega \in \Lambda^2 \ | \ \omega \cdot \eta = 0\}.
 \end{align} We make use of this identification when defining the instanton condition on $M$ in \textsection \ref{instdef}.
 
 \subsection{The spinor bundle }


\medskip
 
For a $7$-dimensional Riemannian manifold $M$ with a nearly parallel $\G2$ structure $\g2$, the spinor bundle $\S$ is a rank-8 real vector bundle over $M$ and is isomorphic to the bundle $\underline{\R}\oplus TM = \Lambda^0\oplus \Lambda^1$. At each point $p\in M$, we can identify the fiber of $\S$ with $\R\oplus T_pM \cong \R\oplus \R^7 \cong \textup{Re}(\O)\oplus \textup{Im}(\O) = \O$. If $\eta$ is the real Killing spinor on $M$ induced by $\g2$ then we have the isomorphism\begin{align*}
\S & = (\Lambda^0TM \cdot \eta) \oplus (\Lambda^1 TM \cdot \eta) \cong \Lambda^0 TM \oplus \Lambda^1 TM.
\end{align*}

\noindent
Under this isomorphism any spinor $s = (f\cdot\eta, \alpha\cdot\eta)\in \S$ can be written as $s = (f,\alpha)\in \Lambda^0\oplus\Lambda^1$.

The $3$-form $\g2$ induces a cross product $\times_\g2$ on vector fields $X,Y\in \Gamma(TM)$. Throughout this article we use $e_i$ to denote both tangent vectors and $1$-forms, identified using the metric. All the computations are done in a local orthonormal frame $\{e_1,\dots,e_7\}$ and any
repeated indices are summed over all possible values. With respect to this local orthonormal frame, we have $(X\times_\g2 Y)_l  =X_iY_j\g2_{ijl}$.  The octonionic product of two octonions $(f_1,X_1)$ and $(f_2,X_2)$ is given by,\begin{align*}
(f_1,X_1)\cdot(f_2,X_2) &= (f_1f_2 - \langle X_1,X_2\rangle , f_1X_2 + f_2X_1 + X_1\times X_2).
\end{align*}

As shown in \cite{spironotes} the Clifford multiplication of a 1-form $Y$ and a spinor $(f,Z)$ is the octonionic product of an imaginary octonion and an octonion and is thus given by
 \begin{align}\label{cliffprod}
Y \cdot (f,Z) &= -(-\langle Y,Z \rangle , fY + Y\times Z).
\end{align}
Note that the product defined above differs from \cite{spironotes} by a negative sign due to our choice of the representation of $Cl_7$ on $\S$ \cite{spinbook}*{Chapter 1.8}.
We define the Clifford multiplication of any $p$-form $\beta=\beta_{i_1\ldots i_p}e_{i_1}\wedge e_2\wedge\cdots \wedge e_{i_p}$with a spinor by,
\begin{align*}
\beta \cdot (f,X) &= \beta_{i_1\ldots i_p}(e_{i_1}\cdot (e_{i_2}\cdot\ldots \cdot( e_{i_p} \cdot (f,X))\ldots)).
\end{align*} 

We record an identity for Clifford algebras for later use and refer the reader to \cite{spinbook}*{Proposition 3.8, Ch1} for the proof.

\begin{prop}\label{cliffidenity} For $\alpha \in \Lambda^p(M)$ \begin{align*}
 \sum_j e_j\cdot \alpha \cdot e_j = (-1)^{p+1}(n-2p)\ \alpha.
\end{align*}
\end{prop}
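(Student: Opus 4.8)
The plan is to prove the identity pointwise, so I fix a point and work in the orthonormal basis $\{e_1,\dots,e_n\}$ of the tangent space coming from the chosen frame. Both sides are pointwise linear in $\alpha\in\Lambda^p$, so it suffices to check the identity on a decomposable form $\alpha=e_{i_1}\wedge\cdots\wedge e_{i_p}$ with $i_1<\cdots<i_p$. For such an $\alpha$ the indices are distinct, so the definition of Clifford multiplication by a $p$-form collapses to the iterated Clifford product, $\alpha\cdot(-)=e_{i_1}\cdot e_{i_2}\cdots e_{i_p}\cdot(-)$, and the statement becomes a purely algebraic identity in the Clifford algebra, to be handled using the relation $e_i\cdot e_j+e_j\cdot e_i=-2\delta_{ij}$ (in particular $e_i\cdot e_i=-1$) dictated by the sign convention fixed after \eqref{cliffprod}.

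Next I would split the sum $\sum_j e_j\cdot\alpha\cdot e_j$ according to whether $j\in\{i_1,\dots,i_p\}$ or not. If $j\notin\{i_1,\dots,i_p\}$, then $e_j$ anticommutes with each of the $p$ factors of $\alpha$, so $e_j\cdot\alpha\cdot e_j=(-1)^p\,\alpha\cdot e_j\cdot e_j=(-1)^{p+1}\alpha$, and there are $n-p$ such indices. If $j=i_k$, moving the left factor $e_j$ rightward past $e_{i_1},\dots,e_{i_{k-1}}$ and using $e_j\cdot e_{i_k}=e_j\cdot e_j=-1$ produces $(-1)^k\,e_{i_1}\cdots\widehat{e_{i_k}}\cdots e_{i_p}$; then multiplying on the right by $e_j=e_{i_k}$ and moving it back into the $k$-th slot past the remaining $p-k$ factors contributes another $(-1)^{p-k}$, giving $(-1)^p\,\alpha$, and there are $p$ such indices. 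Adding the two contributions yields
\[
\sum_j e_j\cdot\alpha\cdot e_j=\big[(n-p)(-1)^{p+1}+p(-1)^p\big]\alpha=(-1)^{p+1}(n-2p)\,\alpha,
\]
which is the claim.

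The computation is entirely routine; the only thing requiring care is the sign bookkeeping while commuting $e_j$ through $\alpha$, and — before that — making sure the Clifford relation used is consistent with the slightly nonstandard convention adopted here, so that $e_i\cdot e_i=-1$ rather than $+1$ (otherwise the overall sign flips). Alternatively one can run the same argument by induction on $p$: writing $\alpha=e_{i_1}\wedge\beta$ with $\beta\in\Lambda^{p-1}$, pulling $e_{i_1}$ out on the left relates $\sum_j e_j\cdot\alpha\cdot e_j$ to $\sum_j e_j\cdot\beta\cdot e_j$ plus a correction from the index $j=i_1$, and the inductive step again reduces to the anticommutation relation. Either way this is the standard Clifford-algebra fact, recorded here for later use as in \cite{spinbook}.
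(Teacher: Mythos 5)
Your proof is correct, and the sign bookkeeping checks out in each case: for $j\notin\{i_1,\dots,i_p\}$ the two anticommutations past all $p$ factors plus $e_j\cdot e_j=-1$ give $(-1)^{p+1}\alpha$ exactly $n-p$ times, while for $j=i_k$ the $(-1)^{k-1}$ from moving in, the $-1$ from $e_{i_k}^2$, and the $(-1)^{p-k}$ from moving back combine to $(-1)^p\alpha$ exactly $p$ times, giving $(-1)^{p+1}\big[(n-p)-p\big]\alpha$. Note that the paper itself does not prove this proposition; it only cites \cite{spinbook}*{Proposition 3.8, Ch.\ 1}. What you have written is precisely the standard textbook argument that that reference uses, so there is no substantive difference in approach — you have simply made explicit what the paper delegates. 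One small point worth keeping in mind: the result as stated, with $(-1)^{p+1}$, is tied to the Clifford relation $e_i\cdot e_j+e_j\cdot e_i=-2\delta_{ij}$ used throughout the paper; with the opposite sign convention one would get $(-1)^p(n-2p)$, and you correctly flagged this as the one place where care is needed.
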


The Clifford multiplication between a $p$-form $\alpha$ and a $1$-form $v$ can be written as \cite{spinbook}*{Proposition 3.9} 
\begin{align*}
    v\cdot \alpha = v\wedge\alpha -v\lrcorner\alpha.
\end{align*}

The vector bundle $\S$ is a $\G2$-representation and since $\G2$ is the isotropy group of the 3-form $\g2$ the map $\mu\mapsto \g2\cdot\mu$ from the bundle of spinors $\S$ to itself is an isomorphism. The same argument holds for the 4-form $\psi$. The following formulae described in \cite{spinor_description,friedivaric} will prove useful in later computations.

\begin{lemma}\label{evalues} The subbundles of $\S$ isomorphic to $\Lambda^0$ and $\Lambda^1$ are eigenspaces of the operations of Clifford multiplication by $\g2$ and $\psi$. The associated eigenvalues are

\begin{center}
\begin{tabular}{c|cc}
& $\Lambda^0$ & $\Lambda^1$  \\ \hline
$\g2$ & $7$ & $-1$ \\
$\psi$ &$  7$ & $-1$. \\
\end{tabular}

\end{center}
\end{lemma}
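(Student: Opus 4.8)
Here is the plan. The shape of the two operators is forced by representation theory, a single scalar has to be computed by hand, and Proposition~\ref{cliffidenity} then delivers the remaining eigenvalues with no further work.

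\emph{Step 1: the eigenspaces.} Since $\G2$ preserves $\g2$, hence also $\psi=*_\g2\g2$, and preserves the Killing spinor $\eta$ (as recalled above), and since Clifford multiplication $\Lambda^\bullet TM\otimes\S\to\S$ is $\mathrm{Spin}(7)$-equivariant, the endomorphisms $s\mapsto\g2\cdot s$ and $s\mapsto\psi\cdot s$ of $\S$ are $\G2$-equivariant. Under $\S\cong\Lambda^0\oplus\Lambda^1$ the summand $\Lambda^0=\underline{\R}\,\eta$ carries the trivial $\G2$-representation and $\Lambda^1\cong TM$ the standard $7$-dimensional one; these are real, absolutely irreducible and pairwise non-isomorphic, so the commutant of $\G2$ in $\mathrm{End}(\S)$ consists exactly of the endomorphisms that are a real scalar on each summand. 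Hence $\g2\cdot$ and $\psi\cdot$ are both diagonalised by the splitting $\Lambda^0\oplus\Lambda^1$, which is the eigenspace assertion; it remains to pin down the four scalars.

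\emph{Step 2: the $\Lambda^0$ eigenvalue.} Here one computes $\g2\cdot\eta$ and $\psi\cdot\eta$ directly. Expanding $\g2$ in the orthonormal coframe and applying \eqref{cliffprod} term by term — equivalently, running the octonion multiplication table, since $\eta=(1,0)\in\Lambda^0\oplus\Lambda^1$ and the $\g2_{ijk}=\langle e_i\times_\g2 e_j,e_k\rangle$ are the structure constants of $\mathrm{Im}\,\O$ — one finds $\g2\cdot\eta=7\,\eta$, the $7$ entering as $|\g2|^2$ through $\sum_{i<j<k}\g2_{ijk}^{\,2}=7$. For $\psi$ one may repeat this, or note that $\psi=*_\g2\g2$ gives $\psi\cdot=\g2\cdot\,\vol\cdot$ on $\S$ (the Clifford-algebra sign being $+1$ in this degree), where $\vol\in\Cl(TM)$ is central because $\dim M=7$ is odd and therefore acts on the irreducible module $\S$ as $\pm\mathrm{Id}$; with the representation fixed in \textsection\ref{prelims} it acts as $+\mathrm{Id}$ (equivalently $\vol\cdot\eta=\eta$), so $\psi\cdot\eta=7\,\eta$ as well.

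\emph{Step 3: the $\Lambda^1$ eigenvalue.} Apply Proposition~\ref{cliffidenity} to $\alpha=\g2$ with $p=3$, $n=7$: since $(-1)^{p+1}(n-2p)=1$, we get $\sum_j e_j\cdot\g2\cdot e_j=\g2$ in $\Cl(TM)$, hence acting on $\eta$
\[
\g2\cdot\eta=\sum_j e_j\cdot\g2\cdot(e_j\cdot\eta).
\]
Each $e_j\cdot\eta$ lies in the $\Lambda^1$-subbundle of $\S$ — it is the spinor corresponding to the $1$-form $-e_j$ — so by Step~1 $\g2\cdot(e_j\cdot\eta)=\lambda\,(e_j\cdot\eta)$ for the (still unknown) $\Lambda^1$-eigenvalue $\lambda$; using $e_j\cdot e_j=-1$ this reads $\g2\cdot\eta=-7\lambda\,\eta$, and comparison with Step~2 gives $\lambda=-1$. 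Verbatim the same argument with $\alpha=\psi$ ($p=4$, $n=7$, again $(-1)^{p+1}(n-2p)=1$) gives the $\Lambda^1$-eigenvalue of $\psi$ equal to $-1$, completing the table. The only genuine computation in all of this is the single identity $\g2\cdot\eta=\psi\cdot\eta=7\,\eta$ of Step~2; the rest is Schur's lemma plus one line of Proposition~\ref{cliffidenity}, and the one delicate point is keeping the conventions — the normalisation of $\g2_{ijk}$, the sign in \eqref{cliffprod}, and the chosen $\Cl_7$-module $\S$ — consistent so that the sign of the $7$ comes out right.
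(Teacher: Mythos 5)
Your argument has exactly the structure of the paper's proof: Schur's lemma identifies $\Lambda^0$ and $\Lambda^1$ as common eigenspaces, a direct Clifford computation pins down the $\Lambda^0$ eigenvalue $7$, and Proposition~\ref{cliffidenity} applied to $\g2$ (resp.\ $\psi$) with $(-1)^{p+1}(n-2p)=1$ forces $\lambda_0+7\lambda_1=0$, whence $\lambda_1=-1$. The one place you diverge is the optional shortcut $\psi\cdot=\g2\cdot\vol\cdot$ with $\vol\cdot\eta=\eta$; the paper instead computes $\psi\cdot\eta$ directly, which avoids having to track both the $\Cl_7$-module sign and the degree-dependent sign in the identity $*\alpha=\pm\,\alpha\cdot\vol$, a bookkeeping task you correctly flag as the delicate point but do not actually discharge.
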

\begin{proof}
The bundle $\S$ is a $\G2$-representation. The spaces $\Lambda^0, \Lambda^1$ are its irreducible subrepresentations and thus are eigenspaces of the operators defined by the Clifford multiplication by $\g2, \psi$ respectively. By Schur's Lemma there exist real constants $\lambda_0,\lambda_1,\mu_0,\mu_1$ such that for all $f\in \Lambda^0, \alpha\in \Lambda^1$  \begin{align*}
    \g2\cdot f  &= \lambda_0 f, \quad \g2\cdot\alpha = \lambda_1\alpha,\\
    \psi\cdot f  &= \mu_0 f, \quad \psi\cdot\alpha = \mu_1\alpha.
\end{align*}  Proposition \ref{cliffidenity} then implies $\sum_i e_i\cdot\g2\cdot e_i = \g2$ and $\sum_i e_i\cdot\psi\cdot e_i = \psi$ thus \begin{align*}
\lambda_0f&=\g2\cdot f= \sum_{i=1}^7 e_i\cdot\g2\cdot e_i\cdot f ,\\
\mu_0 f&=\psi \cdot f = \sum_{i=1}^7 e_i\cdot\psi\cdot e_i\cdot f.
\end{align*}Using the fact that $e_i\cdot f \in \Lambda^1$ and summing over $i$ we get \begin{align*} \tag{R1}\lambda_0+7\lambda_1&=0,\label{R1}\\ 
\tag{R2} \mu_0+7\mu_1&=0.\label{R2}
\end{align*}

We find the eigenvalues corresponding to $\Lambda^0$ by explicit calculations and use relations (R1) and (R2) to show the result for $\Lambda^1$. 
 Let $(f,0)\in \Lambda^0$ be a spinor.
 \noindent
In the local orthonormal frame $e_1,\ldots,e_7$, we have $\g2 = \frac{1}{6}\g2_{ijk}e_i\wedge e_j\wedge e_k$, where $\g2_{ijk}$ is skew-symmetric in each pair of indices. Using \eqref{cliffprod} we get that\begin{align*}
\g2\cdot(f,0) &= \frac{1}{6}\g2_{ijk}e_i\cdot(e_j\cdot(e_k\cdot(f,0)))= -\frac{1}{6}\g2_{ijk}e_i\cdot(e_j\cdot(0, fe_k ))\\&= \frac{1}{6}\g2_{ijk}e_i\cdot(-f\delta_{kj} , f\g2_{jkt}e_t)\\&= -\frac{1}{6}\g2_{ijk}(-f\g2_{ijk} , -f\delta_{kj}e_i+f\g2_{jkt}\g2_{itp}e_p). 
 \end{align*} By using the skew-symmetry of $\g2$ and the contraction identities $\g2_{ijk}\g2_{ijl} = 6\delta_{kl}, \g2_{ijk}\g2_{ijk} = 42$ (see \cite{skflow}), we get
\begin{align*}
\g2\cdot(f,0) &= \frac{1}{6}(42f, -6f\delta_{it}\g2_{itp}e_p) = (7f,0).\\
\end{align*} 

 \noindent
Similarly, in above local orthonormal frame, $\psi = \frac{1}{24}\psi_{ijkl}e_i\wedge e_j\wedge e_k\wedge e_l$ and using \eqref{cliffprod} we get
\begin{align*}
\psi\cdot (f,0) & = \frac{1}{24}\psi_{ijkl} e_i\cdot(e_j\cdot(e_k\cdot(e_l\cdot(f,0))))= -\frac{1}{24}\psi_{ijkl} e_i\cdot(e_j\cdot(e_k\cdot(0,fe_l ))) \\
& = \frac{1}{24}\psi_{ijkl} e_i\cdot(e_j\cdot(-f\delta_{kl} , f\g2_{klp}e_p))\\
&=-\frac{1}{24}\psi_{ijkl} e_i\cdot( -f\g2_{klp}\delta_{jp},-f\delta_{kl}e_j  +f\g2_{klp}\g2_{jpt}e_t )\\
&=\frac{1}{24}\psi_{ijkl} (f\delta_{kl}\delta_{ij}-f\g2_{klp}\g2_{jpt}\delta_{it}, f\g2_{klp}\delta_{jp}e_i-f\delta_{kl}\g2_{ijs}e_s + f\g2_{klp}\g2_{jpt}\g2_{itr}e_r).
\end{align*}Here we can use the skew-symmetry of $\psi$, the contraction identity $\psi_{ijkl}\g2_{klp} = -4\g2_{ijp}$ along with the contraction identities of $\g2$ mentioned before to obtain
\begin{align*} 
\psi\cdot (f,0)&= \frac{1}{24}(24\delta_{il}\delta_{il}f,0)=\frac{1}{24}(24.7f,0) = (7f,0).
\end{align*}

Substituting  $\lambda_0=7$ and $\mu_0=7$ in Relations \eqref{R1}, \eqref{R2} respectively proves the desired result.
\end{proof}

\medskip

A common feature between nearly K\"ahler $6$-manifolds and manifolds with nearly parallel $\G2$ structures is the presence of a unique canonical connection $\del^{can}$ with totally skew-symmetric torsion defined below. The Killing spinor $\eta$ is parallel with respect to this connection and thus we have $\textup{Hol}(\del^{can}) \subset \G2$. It was proved by Cleyton--Swann in \cite{Cleyton2004}*{Theorem 6.3} that a $G$-irreducible Riemannian manifold $(M, g)$ with an invariant skew-symmetric non-vanishing intrinsic torsion falls in one of the following categories:
\begin{enumerate}
    \item it is locally isometric to a non-symmetric isotropy irreducible homogeneous space, or,
    \item it is a nearly K\"ahler $6$-manifold, or,
    \item it admits a nearly parallel $\G2$ structure.
    \end{enumerate}
    
For the nearly $\G2$ manifold $(M,\g2)$ we define a $1$-parameter family of connections on $TM$ that include the canonical connection $\del^{can}$. Let $t\in \R$ and let  $\del^t$ be the $1$-parameter family of connections on $TM$ defined for all $X,Y,Z \in \Gamma(TM)$ by
\begin{align}
g(\del^t_X Y, Z) & = g(\del^{LC}_X Y, Z) + \frac{t}{3}\g2(X,Y,Z).  \label{delgt} 
\end{align}

\medskip

\noindent
Let $T^t$ be the torsion $(1,2)$-tensor of $\del^t$. Since the connection $\del^{LC}$ is torsion free \begin{align*}
 g(X, T^t(Y,Z)) &= g(X, \del^t_YZ) - g(X, \del^t_ZY) - g(X,[Y,Z]) \\
&= g(\del^{LC}_YZ,X) + \frac{t}{3}\g2(Y,Z,X) - g(\del^{LC}_ZY,X)-\frac{t}{3}\g2(Z,Y,X)\\ &\quad -g(X,[Y,Z]) \\ 
&= \frac{2t}{3}\g2(X,Y,Z).
\end{align*}Therefore the torsion tensor $T^t$ is given by 
\begin{align}
T^t(X,Y)  =\frac{2t}{3}\g2(X,Y,\cdot)\label{torsion}
\end{align} which is proportional to $\g2$ and is thus totally skew-symmetric.

\medskip

By \cite{spinbook}*{Theorem 4.14} the lift of the connection $\del^t$ on the spinor bundle which is also denoted by $\del^t$ acts on sections $\mu$ of $\S$ as \begin{align}
\del^t_X \mu &= \del^{LC}_X \mu + \frac{t}{6}(i_X \g2)\cdot\mu. \label{delt}
\end{align}

\noindent
The space of real Killing spinors is isomorphic to $\Lambda^0$ thus for a Killing spinor $\eta$ it follows from \eqref{killspin} and Lemma \ref{evalues} that for any vector field $X$ since $X\cdot\g2+\g2\cdot X = -2 \ i_X\g2$,  \begin{align*}
\del^t_X \eta &=\del^0_X\eta +\frac{t}{6}(i_X\g2)\cdot\eta\\
&=-\frac{1}{2}X\cdot \eta - \frac{t}{12}(X\cdot \g2 + \g2 \cdot X) \cdot \eta \\ 
& = -\frac{1}{2}X\cdot \eta - \frac{t}{12}( 7 X\cdot\eta - X\cdot \eta ) \\
& = -\frac{t+1}{2}X\cdot \eta.
\end{align*}
\noindent
Therefore $\eta$ is parallel with respect to the connection $\del^{-1}$. The connection $\del^{-1}$ thus has holonomy group contained in $\G2$ with totally skew-symmetric torsion and is therefore the canonical connection on the nearly $\G2$ manifold $M$ described in \cite{Cleyton2004}.

\medskip

\begin{prop}\label{ricciprop} 
The Ricci tensor $\Ric^t$ of the connection $\del^t$ is given by
\begin{align*}
\Ric^t & = (6-\frac{2t^2}{3})g.
\end{align*} \end{prop}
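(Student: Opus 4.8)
The plan is to compute the Ricci curvature of the metric connection $\del^t$ with totally skew-symmetric torsion $T^t = \frac{2t}{3}\g2$ directly from the standard formula relating the Ricci tensor of a connection with skew torsion to the Levi-Civita Ricci tensor. Recall that for a metric connection $\del = \del^{LC} + \frac12 T$ with $T$ a $3$-form (viewed as a $(1,2)$-tensor via the metric), the Ricci tensor satisfies
\begin{align*}
\Ric(X,Y) = \Ric^{LC}(X,Y) - \tfrac14 \sum_{i,j} T(X,e_i,e_j) T(Y,e_i,e_j) - \tfrac12 \sum_i (\del^{LC}_{e_i} T)(e_i, X, Y),
\end{align*}
or a version thereof depending on sign conventions; the symmetric part is what matters here. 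So the first step is to fix the precise form of this formula consistent with the paper's conventions, writing $\del^t = \del^{LC} + \frac13 t\,\g2(X,Y,\cdot)$ so that the torsion $3$-form is $\frac{2t}{3}\g2$, and identifying the half-torsion coefficient accordingly.

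Next I would evaluate the two correction terms. For the quadratic term, the key input is the contraction identity $\sum_{i,j}\g2_{aij}\g2_{bij} = 6\delta_{ab}$ already cited in the excerpt (from \cite{skflow}); this gives $\sum_{i,j} T^t(X,e_i,e_j)T^t(Y,e_i,e_j) = \frac{4t^2}{9}\cdot 6\, g(X,Y) = \frac{8t^2}{3} g(X,Y)$, which after multiplying by the appropriate numerical factor contributes a term proportional to $t^2 g$. For the derivative term, I would use the nearly parallel condition: since $d\g2 = 4\psi$ and $\del^{LC}\g2$ is (up to the known constant) determined by $d\g2$ and $d\psi = 0$ — concretely $\del^{LC}_X \g2 = \frac{\tau_0}{4}(X \lrcorner \psi)$ with $\tau_0 = 4$, so $\del^{LC}_X\g2 = X\lrcorner\psi$ — the divergence-type contraction $\sum_i (\del^{LC}_{e_i}\g2)(e_i, X, Y) = \sum_i (e_i \lrcorner \psi)(e_i,X,Y)$ is a contraction of $\psi$ which vanishes by skew-symmetry (one index of $\psi$ is contracted against itself), or more precisely reduces to something that contributes nothing to the symmetric Ricci. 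Finally I would combine with $\Ric^{LC} = \frac38\tau_0^2 g = 6g$ (from the excerpt, with $\tau_0 = 4$) to obtain $\Ric^t = 6g - \frac{2t^2}{3}g$, checking the $t=0$ case against $\Ric^{LC}=6g$ and the overall normalization.

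The main obstacle will be bookkeeping of constants and signs: pinning down the exact coefficient in the skew-torsion Ricci formula under this paper's sign conventions for the Clifford/cross-product and for $\g2$, and making sure the factor of $\frac13$ in \eqref{delgt} versus the factor $\frac12$ in the standard "$\del^{LC} + \frac12 T$" parametrization is tracked correctly through the quadratic term. I would double-check the final answer by an independent route — for instance verifying that $\del^{-1}$ (the canonical connection, holonomy in $\G2$) has $\Ric^{-1} = 6g - \frac23 g = \frac{16}{3}g$, and cross-checking this against the known value of the Ricci curvature of the canonical connection on a nearly $\G2$ manifold, or by computing $\Ric^t$ on one of the homogeneous examples where everything is explicit. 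A secondary point to handle carefully is the contribution of the antisymmetric part of the torsion term and the $\del^{LC}T$ term: I should confirm these either cancel or are themselves symmetric, so that $\Ric^t$ is genuinely a multiple of $g$ as claimed rather than having a skew part.
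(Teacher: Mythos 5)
Your proposal follows essentially the same route as the paper: it invokes the Friedrich--Ivanov formula for the Ricci tensor of a metric connection with totally skew torsion, evaluates the quadratic correction via the contraction identity $\g2_{ijk}\g2_{ijl}=6\delta_{kl}$ to get $-\frac{2t^2}{3}g$, observes that the first-order (divergence) term vanishes, and adds $\Ric^{LC}=6g$. The only cosmetic difference is that the paper kills the divergence term by noting $d^*\g2=0$ (from $d\psi=0$), while you argue it directly from $\del^{LC}_X\g2 = X\lrcorner\psi$ and the skew-symmetry of $\psi$; these are equivalent.
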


\begin{proof}
By using the expression of the Ricci tensor for a connection with a totally skew-symmetric torsion from \cite{friedivaric}, we have \begin{align*}
\Ric^t(X,Y) &= \Ric^0(X,Y) - \frac{t}{3}\d^* \g2 (X,Y) - \frac{2t^2}{9}g(i_X \g2, i_Y\g2)
\end{align*} 

The Ricci tensor for the Levi-Civita connection is given by $\Ric^0=6g$.  Since $d\psi=0$, $\g2$ is co-closed  and the second term in the above expression vanishes. The third term can be calculated in a local orthonormal frame $e_1,\ldots ,e_7$ using the contraction identity $\g2_{ijk}\g2_{ijl}=6\delta_{kl}$  as follows\begin{align*}
 g(i_X \g2, i_Y\g2)& = \frac{1}{4} \sum_{i,j,k,\alpha,\beta,\gamma} X_kY_\gamma \g2_{ijk}\g2_{\alpha\beta\gamma}g(e_i\wedge e_j, e_\alpha\wedge e_\beta) \\
& = \frac{1}{4}\sum_{i,j,k,\gamma} X_kY_\gamma (\g2_{ijk}\g2_{ij\gamma}-\g2_{ijk}\g2_{ji\gamma})\\
& = 3\sum_{k,\gamma} X_kY_\gamma \delta_{k\gamma} = 3g(X,Y).
\end{align*}

Summing up all the terms together give the desired identity for $\Ric^t$.
\end{proof}

\section{Deformation theory of instantons}\label{instdef}
Let $\P\to M$ be a principal $K$-bundle. We denote by $\textup{Ad}_\P$ the adjoint bundle associated to $\P$. Let $A$ be a connection $1$-form on $\P$ and $F_A\in \Gamma(\Lambda^2T^* M \otimes \textup{Ad}_\P)$ be the curvature $2$-form associated to $A$ given by  \begin{align*}
F_A&=\d A  + \frac{1}{2}[A\wedge A] .
\end{align*}

\medskip

There are many ways to define the instanton condition on $A$. If $(M,g)$ is equipped with a $G$-structure such that $G\subset \mathrm{O}(n)$, there is a subbundle $\mathfrak{g}(T^* M) \subset \Lambda^2T^* M$ whose fibre is isomorphic to $\mathfrak{g}=\textup{Lie}(G)$. The connection $A$ is an instanton if the 2-form part of $F_A$ belongs to $\mathfrak{g}(T^* M)$. In global terms, $A$ is an instanton if \begin{align*}
F_A \in \Gamma(\mathfrak{g}(T^* M) \otimes \textup{Ad}_\P) \subset \Gamma(\Lambda^2T^* M\otimes \textup{Ad}_\P). 
\end{align*} 
Note that in dimension $7$ if $M$ is equipped with a $\G2$ structure then this condition implies that $A$ is an instanton if the $2$-form part of $F_A \in \mathfrak{g}_2(T^*M)=  \Gamma(\Lambda^2_{14}) $. 

\medskip
 
The second definition of an instanton is a special case of the first when the Lie algebra $\mathfrak{g}$ is simple. Its quadratic Casimir is a $G$-invariant element of $\mathfrak{g}\otimes \mathfrak{g}$ which may be identified with a section of $\Lambda^2\otimes \Lambda^2$ and thus to a $4$-form $Q$ by taking a wedge product. Since this $Q$ is $G$-invariant the operator $u\to *(* Q \wedge u)$ acting on $2$-forms commutes with the action of $G$ and hence by Schur's Lemma the irreducible representations of $G$ in $\Lambda^2$ are eigenspaces of the operator. Then $F_A$ is an instanton if 
\begin{align*}
 *(* Q \wedge F_A) &= \nu F_A. 
\end{align*}
for some $\nu\in\R$. In dimension $7$ it turns out that $Q = \psi$ (see \cite{nolle}) and the above condition is equivalent to $F_A\in \Gamma(\Lambda^2_{14})$ when $\nu=1$.
\medskip

Furthermore if $M$ is a spin manifold, and the spinor bundle admits one or more non-vanishing spinors $\eta$, then $A$ is an instanton if 
\begin{align*}
F_A\cdot \eta &= 0. 
\end{align*}
When $M$ has a $\G2$ structure and $\eta$ is the corresponding spinor, \eqref{instantoneq3} implies that a the above condition is satisfied if and only if $A$ is a $\G2$-instanton.  An interested reader can read further on these definitions and their relations in \cite{nolle}.

\medskip

We remark that for an instanton $A$ on a manifold with a $\G2$ structure $\g2$ all the above definitions are equivalent. They all imply that the curvature $F_A$ associated to $A$ lies in $\Gamma(\Lambda^2_{14})$ and thus satisfies \textit{all} of these equivalent conditions:
\begin{align}\label{instantoneq}
\begin{split}
F_A\cdot\eta &=0,\\
F_A\wedge\g2 &= * F,\\
F_A\wedge \psi &=0,\\
F_A\intprod \g2&=0.
\end{split}
\end{align}
 
From now on in this article we use these instanton conditions interchangeably according to the context without further specification. Note that the above definitions are valid for any general $\G2$ structure and not only for nearly parallel ones.

On a manifold with real Killing spinors it was shown in \cite{nolle} that instantons solve the Yang--Mills equation. In the case of a nearly $\G2$-instanton we can prove this fact by direct computation. For an instanton $A$, \eqref{instantoneq} and the second Bianchi identity imply \begin{align*}
(d^A)^* F_A &= * d^A* F_A \\
&= * d^A (\g2 \wedge F_A)\\
& = 4*( \psi\wedge F_A) = 0.
\end{align*}

\subsection{Infinitesimal deformation of instantons}
Let $M^7$ be a nearly $\G2$ manifold. We are interested in studying the infinitesimal deformations of nearly $\G2$-instantons on $M$. An infinitesimal deformation of a connection $A$ represents an infinitesimal change in $A$ and thus, is a section of  $T^* M\otimes Ad_\P$. If $\epsilon\in\Gamma(T^* M\otimes Ad_\P)$ is an infinitesimal deformation of $A$, the corresponding change in the curvature $F_A$ up to first order is given by $\d^A \epsilon$. A standard gauge fixing condition on this perturbation is given by $(\d^A)^* \epsilon=0$. So in total the pair of equations whose solutions define an infinitesimal deformation of an instanton $A$ is given by \begin{align}
(\d^A\epsilon) \cdot \eta = 0, \ \ (\d^A)^{*} \epsilon = 0. \label{perturbation}
\end{align}
On a nearly $\G2$ manifold we can define a $1$-parameter family of Dirac operators 
\begin{align*}
    D^{t,A}&=D^{A}+ \frac{t}{2}\g2\cdot
\end{align*}

\medskip

The 1-parameter family of connections on the spinor bundle $\S$ defined in \eqref{delt} and the connection $A$ on $\P$ can be used to construct a 1-parameter family of connections on the associated vector bundle $\S\otimes Ad_\P$. We denote by $\del^{t,A}$, the connection induced by $\del^t$ and $A$ for all $t\in\R$ respectively. We denote by $D^{t,A}$ the Dirac operator associated to $\del^{t,A}$. The following proposition associates the solutions to \eqref{perturbation} to a particular  eigenspace of $D^{t,A}$ for each $t$. The proposition was proved in \cite{friedevalue} for $t=0$.
\begin{prop}\label{diracevalue}Let $\epsilon$ be a section of $T^* M\otimes Ad_\P$, and let $D^{t,A}$ be the Dirac operator constructed from the connections $\del^{t,A}$ for $t\in \R$. Then $\epsilon$ solves \eqref{perturbation} if and only if \begin{align}
D^{t,A}(\epsilon \cdot \eta) & = -\frac{t+5}{2}\epsilon\cdot\eta. \label{perturbdirac}
\end{align}
\end{prop}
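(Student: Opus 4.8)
The plan is to translate the two equations in \eqref{perturbation} into a single spinorial equation by Clifford-multiplying $\d^A\epsilon$ and $(\d^A)^*\epsilon$ by $\eta$ and assembling the results. Write $\epsilon = \sum_j e_j \otimes \epsilon_j$ in a local orthonormal frame and consider the spinor-valued section $\epsilon\cdot\eta := \sum_j (e_j\cdot\eta)\otimes \epsilon_j \in \Gamma(\Lambda^1(\S)\otimes Ad_\P)$. First I would compute $D^{0,A}(\epsilon\cdot\eta) = \sum_i e_i\cdot\del^{0,A}_{e_i}(\epsilon\cdot\eta)$ using the Leibniz rule and \eqref{killspin}, which splits it into a term where the derivative hits $\eta$ (giving $-\tfrac12\sum_{i,j} e_i\cdot e_j\cdot\eta \otimes \del^{0,A}_{e_i}\epsilon_j$, to be decomposed via $e_i\cdot e_j = e_i\wedge e_j - \delta_{ij}$) and a term where it hits the $Ad_\P$-valued one-form $\epsilon$ (giving a piece involving $\d^A\epsilon$ and $(\d^A)^*\epsilon$ through the standard identity $\sum_{i,j} e_i\cdot e_j \cdot (\nabla^{LC}_{e_i}\epsilon_j) = -(\d^A)^*\epsilon + (\d^A\epsilon)$ under Clifford action on $\eta$). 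The upshot should be a formula expressing $D^{0,A}(\epsilon\cdot\eta)$ in terms of $(\d^A)^*\epsilon\cdot\eta$, $(\d^A\epsilon)\cdot\eta$, and a zeroth-order term $c\,\epsilon\cdot\eta$ coming from the Killing equation; then adding $\tfrac t2\g2\cdot(\epsilon\cdot\eta)$ and using Lemma \ref{evalues} (since $\epsilon\cdot\eta \in \Gamma(\Lambda^1(\S))$, Clifford multiplication by $\g2$ acts as $-1$) adjusts the zeroth-order constant to the claimed $-\tfrac{t+5}{2}$.

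More precisely, the key algebraic input is that for $\omega\in\Gamma(\Lambda^2_{14})$ one has $\omega\cdot\eta=0$ by \eqref{instantoneq3}, while for a general $2$-form $\omega = \omega_7 + \omega_{14}$ the Clifford product $\omega\cdot\eta$ sees only the $\Lambda^2_7$ component; and conversely $\omega\cdot\eta = 0$ forces $\omega\in\Lambda^2_{14}$. So the "instanton part" of $\d^A\epsilon$, i.e. the condition $(\d^A\epsilon)\cdot\eta = 0$, is equivalent to $\d^A\epsilon \in \Gamma(\Lambda^2_{14}\otimes Ad_\P)$, which is exactly the linearised instanton equation. Meanwhile a $1$-form valued in $Ad_\P$ that is both $\d^A$-closed-into-$\Lambda^2_{14}$ and $(\d^A)^*$-closed is precisely a solution of \eqref{perturbation}. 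Thus after the computation of $D^{0,A}(\epsilon\cdot\eta)$ I would argue: if \eqref{perturbation} holds then the $(\d^A)^*\epsilon$-term and the $\Lambda^2_7$-projection of the $\d^A\epsilon$-term both vanish, leaving $D^{0,A}(\epsilon\cdot\eta) = -\tfrac52\,\epsilon\cdot\eta$, hence $D^{t,A}(\epsilon\cdot\eta) = -\tfrac{t+5}{2}\,\epsilon\cdot\eta$. Conversely, given \eqref{perturbdirac}, I would decompose the spinor equation into its $\Lambda^0(\S)$- and $\Lambda^1(\S)$-valued components (equivalently, pair with $\eta$ and with $e_k\cdot\eta$): the component orthogonal to the "$\epsilon\cdot\eta$ direction" must vanish, which forces separately $(\d^A)^*\epsilon = 0$ and the $\Lambda^2_7$-part of $\d^A\epsilon$ to vanish, recovering \eqref{perturbation}.

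The main obstacle I anticipate is bookkeeping in the Clifford-algebra computation of $D^{0,A}(\epsilon\cdot\eta)$: correctly handling the three-fold Clifford products $e_i\cdot e_j\cdot \eta$ and, in the term where $\del^{0,A}$ differentiates $\epsilon$, isolating cleanly the $\d^A$ and $(\d^A)^*$ pieces together with their coefficients — in particular making sure the zeroth-order constant really comes out to $-\tfrac52$ and not some other value, since the whole statement hinges on that constant. A subtlety specific to the nearly $\G2$ (as opposed to torsion-free) setting is that $\del^{0,A}$ is the Levi-Civita-induced connection and the torsion enters only through the $\tfrac t2\g2\cdot$ shift; one must be careful that the $\g2\cdot$ term is added to $D^A$ and not absorbed into a connection in the $1$-parameter family, and that its eigenvalue on $\Lambda^1(\S)$ is $-1$ (Lemma \ref{evalues}), which is what makes the final coefficient depend affinely on $t$. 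Once the constant is pinned down, the equivalence with \eqref{perturbation} follows from the representation-theoretic facts about $\Lambda^2_7$ versus $\Lambda^2_{14}$ recalled above, with essentially no further work; for $t=0$ this reproduces the result of \cite{friedevalue}.
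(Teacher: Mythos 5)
Your proposal is correct and takes essentially the same route as the paper: Leibniz on $D^{0,A}(\epsilon\cdot\eta)$, the Killing spinor equation \eqref{killspin} to produce the zeroth-order constant $-\tfrac52$ (the paper handles the resulting Clifford contraction $\sum_a e_a\cdot\epsilon\cdot e_a$ via Proposition~\ref{cliffidenity} rather than your hands-on $e_i\cdot e_j = e_i\wedge e_j - \delta_{ij}$ expansion, but these are the same computation), then adding $\tfrac t2\g2\cdot$ and using the $-1$ eigenvalue from Lemma~\ref{evalues}, and finally decomposing $(\d^A\epsilon+(\d^A)^*\epsilon)\cdot\eta=0$ into its $\Lambda^0$- and $\Lambda^1$-components of $\S$. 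The only minor slip is your sign on the codifferential term ($-(\d^A)^*\epsilon$ where the paper's conventions give $+(\d^A)^*\epsilon$), which is harmless here since that term vanishes under the gauge-fixing condition.
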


\begin{proof}
Let $\{e_a, a=1\dots 7\}$ be a local orthonormal frame for $T^* M$. Then \begin{align*}
D^{0,A}(\epsilon \cdot \eta) & = e_a\cdot \del^0_a(\epsilon\cdot\eta) \\
&=(e_a\cdot\del^0_a\epsilon)\cdot\eta+e_a\cdot\epsilon\cdot \del^0_a\eta\\
&= (\d^A\epsilon + (\d^A)^* \epsilon)\cdot \eta + e_a\cdot\epsilon\cdot \del^0_a\eta. 
\end{align*}
Applying Proposition \ref{cliffidenity} to the 1-form part of $\epsilon$ we get $e_a\cdot \epsilon\cdot e_a\cdot \eta = 5\epsilon\cdot\eta$. So if $\eta$ is a real Killing spinor then \eqref{killspin} together with the above identity imply
\begin{align*}
D^{0,A}(\epsilon \cdot \eta) & =(\d^A\epsilon + (\d^A)^* \epsilon)\cdot \eta - \frac{1}{2} e_a\cdot\epsilon\cdot e_a \cdot\eta \\
& = (\d^A\epsilon + (\d^A)^* \epsilon -\frac{5}{2} \epsilon)\cdot \eta. 
\end{align*}
It follows from \eqref{delt} and the identity $\sum_a e_a\cdot i_a\g2 = 3\g2$ that
\begin{align*}
D^{t,A} &= D^{0,A} + \frac{t}{2}\g2\cdot 
\end{align*}

Since $\epsilon\cdot\eta\in\Lambda^1\cdot\eta$, by Lemma \ref{evalues} we have \begin{align*}
D^{t,A}(\epsilon\cdot \eta) &= \Big(\d^A\epsilon +(\d^A)^* \epsilon + \frac{-t-5}{2}\epsilon\Big)\cdot \eta. 
\end{align*}

The equation $D^{t,A}(\epsilon\cdot\eta) = -\frac{t+5}{2}\epsilon \cdot \eta$ is thus equivalent to $(\d^A\epsilon +(\d^A)^*\epsilon)\cdot\eta =0$, which in turn is equivalent to the pair of equations $(\d^A\epsilon)\cdot\eta =0, (\d^A)^* \epsilon =0$ since these two components live in complementary subspaces. 
\end{proof}

\medskip

\noindent
Since  $\eta$ is parallel with respect to $\del^{-1}$ we can view $D^{-1,A}$ as an operator on $\Lambda^1\otimes Ad_P$ defined by $D^{-1,A}(\epsilon\cdot\eta) =( D^{-1,A}\epsilon)\cdot\eta$. The following theorem is an immediate consequence of the above proposition.

\begin{thm}\label{kernelelliptic}
 The space of infinitesimal deformations of a $\G2$-instanton $A$ on a principal bundle $\P$ over a nearly $\G2$ manifold $M$ is isomorphic to the kernel of the operator 
 \begin{align}
 \Big(D^{-1,A}+ 2 \ \mathrm{Id}\Big) \colon \Gamma(\Lambda^1\otimes Ad_\P) \to \Gamma(\Lambda^1\otimes Ad_\P).
 \end{align}   
\end{thm}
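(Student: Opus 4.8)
The plan is to reduce Theorem \ref{kernelelliptic} to the statement of Proposition \ref{diracevalue}, which has already been established, so the only real content here is bookkeeping about the value of the parameter $t$ and the identification of the spinor bundle with $\Lambda^0 \oplus \Lambda^1$. Concretely, I would begin by recalling that an infinitesimal deformation of the instanton $A$ is by definition a section $\epsilon \in \Gamma(T^*M \otimes Ad_\P)$ solving the pair of equations \eqref{perturbation}, so the space of infinitesimal deformations is exactly the solution space of that system.

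Next I would invoke Proposition \ref{diracevalue} with the specific choice $t = -1$: a section $\epsilon$ solves \eqref{perturbation} if and only if $D^{-1,A}(\epsilon \cdot \eta) = -\tfrac{(-1)+5}{2}\,\epsilon \cdot \eta = -2\,\epsilon \cdot \eta$, i.e. $\bigl(D^{-1,A} + 2\,\mathrm{Id}\bigr)(\epsilon \cdot \eta) = 0$. The point of specializing to $t=-1$ is that $\eta$ is parallel with respect to $\del^{-1}$ (established just before the theorem in the excerpt), so Clifford multiplication by $\eta$ intertwines the connection $\del^{-1,A}$ on $\Lambda^1 \otimes Ad_\P$ with the induced connection on $\S \otimes Ad_\P$; hence $D^{-1,A}$ descends to a genuine operator on $\Gamma(\Lambda^1 \otimes Ad_\P)$ satisfying $D^{-1,A}(\epsilon \cdot \eta) = (D^{-1,A}\epsilon)\cdot\eta$. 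Then I would use that the map $\epsilon \mapsto \epsilon \cdot \eta$ is a bundle isomorphism $\Lambda^1 \otimes Ad_\P \to \Lambda^1(\S) \otimes Ad_\P$ (this is the identification $\S \cong \Lambda^0 \oplus \Lambda^1$ from \textsection\ref{prelims}, since $\eta$ is nowhere vanishing), so the equation $\bigl(D^{-1,A} + 2\,\mathrm{Id}\bigr)(\epsilon \cdot \eta) = 0$ is equivalent to $\epsilon$ lying in the kernel of $\bigl(D^{-1,A} + 2\,\mathrm{Id}\bigr)$ acting on $\Gamma(\Lambda^1 \otimes Ad_\P)$. Combining these two equivalences gives the claimed isomorphism between the deformation space and $\ker\bigl(D^{-1,A} + 2\,\mathrm{Id}\bigr)$.

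Finally I would note ellipticity, which is implicit in the phrasing "the elliptic operator": $D^{-1,A}$ is a Dirac-type operator — the Dirac operator of the connection $\del^{-1,A}$ on $\S \otimes Ad_\P$ — and hence has the same principal symbol as the untwisted, untwisted-connection Dirac operator, namely Clifford multiplication, which is invertible off the zero section; adding the zeroth-order term $2\,\mathrm{Id}$ does not change the symbol, so $D^{-1,A} + 2\,\mathrm{Id}$ is elliptic. On a compact $M$ this also makes the kernel finite-dimensional, though compactness is not needed for the isomorphism statement itself.

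There is essentially no obstacle: the substantive analytic work — the computation relating $D^{t,A}(\epsilon\cdot\eta)$ to $\d^A\epsilon + (\d^A)^*\epsilon$ via Proposition \ref{cliffidenity} and the Killing spinor equation — was already carried out in the proof of Proposition \ref{diracevalue}. The only thing to be careful about is the sign and the arithmetic $-\tfrac{t+5}{2}\big|_{t=-1} = -2$, and the fact that one is allowed to transfer the operator from $\S\otimes Ad_\P$ to $\Lambda^1 \otimes Ad_\P$ precisely because $t=-1$ is the value for which $\eta$ is parallel; for any other $t$ one would not get a well-defined operator on $\Lambda^1\otimes Ad_\P$ by this formula. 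So the proof is a two-line deduction from the proposition.
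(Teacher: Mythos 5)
Your proposal is correct and follows the paper's own argument: the paper likewise derives Theorem \ref{kernelelliptic} as an immediate corollary of Proposition \ref{diracevalue} by specializing to $t=-1$, after noting (as you do) that $\eta$ being $\del^{-1}$-parallel lets one regard $D^{-1,A}$ as an operator on $\Gamma(\Lambda^1\otimes Ad_\P)$ via $D^{-1,A}(\epsilon\cdot\eta)=(D^{-1,A}\epsilon)\cdot\eta$. Your added remark on ellipticity is standard and consistent with the paper's phrasing, even though the paper does not spell it out.
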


\medskip

\begin{rem}\label{Dequiv}
By Proposition \ref{diracevalue}, the  $-\frac{t+5}{2}$ eigenspace of the operator $D^{t,A}$ on $\Lambda^1\cdot \eta \otimes Ad_\P$ is isomorphic to the infinitesimal deformation space of the instanton $A$ for all $t\in\R$ and all these eigensapces are thus isomorphic to each other. In particular \begin{align}\label{diraciso}
    \ker(D^{-1/3,A}+\frac{7}{3} \id)\cong \ker(D^{-1,A}+2\id).
\end{align}
\end{rem}
The deformation space found above can be further analysed as an eigenspace of the square of the Dirac operator. In \cite{AgrFried} the authors obtained a Schr\"odinger--Lichnerowicz type formula relating the square of the Dirac operator with torsion $T$ to the connection with torsion $3T$. Such a rescaling was earlier used in \cite{goette} for $\eta$-invariant homogeneous spaces and in \cite{bismut} for Hermitian manifolds. The proof adapted to our setting is presented to keep the discussion self contained. 
\begin{prop}\label{dsquareprop}Let $EM$ be a vector bundle associated to $\P$ and $\mu\in \Gamma(\S\otimes EM)$. Let $A$ be any connection on $\P$. Then for all $t\in\R$,
\begin{align}
(D^{t/3,A})^2\mu &= (\del^{t,A})^*\del^{t,A}\mu + \frac{1}{4}\mathrm{Scal}_g\mu +  \frac{t}{6}\d \g2 \cdot \mu -\frac{t^2}{18}\|\g2\|^2\mu + F\cdot\mu. \label{Dsquare}
\end{align} 
\end{prop}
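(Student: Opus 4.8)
The plan is to derive the Schrödinger–Lichnerowicz formula with skew-symmetric torsion by working directly with the connection $\del^{t,A}$ on $\S\otimes EM$, which by \eqref{delt} differs from the Levi-Civita lift by the algebraic term $\tfrac{t}{6}(i_X\g2)\cdot$, and the Dirac operator $D^{t/3,A}$, which by the computation in the proof of Proposition \ref{diracevalue} (together with $\sum_a e_a\cdot i_a\g2 = 3\g2$) equals $D^{0,A}+\tfrac{t}{6}\g2\cdot$. First I would expand $(D^{t/3,A})^2\mu = \sum_{a,b} e_a\cdot\del^t_a(e_b\cdot\del^t_b\mu)$ in a local orthonormal frame normal at the point, splitting the sum into the diagonal $a=b$ terms and the off-diagonal $a\neq b$ terms. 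The diagonal terms, using $e_a\cdot e_a = -1$, produce the connection Laplacian $(\del^{t,A})^*\del^{t,A}\mu$ up to lower-order corrections coming from $\del^t_a e_a$, and the off-diagonal terms assemble into a curvature expression $\sum_{a<b}e_a\cdot e_b\cdot R^{t,A}(e_a,e_b)\mu$, where $R^{t,A}$ is the curvature of $\del^{t,A}$.

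Next I would decompose the curvature $R^{t,A}$ into the Riemannian part, the $EM$-part (contributing $F\cdot\mu$ after Clifford-contracting against $\sum e_a\cdot e_b$), and the torsion contributions. For the Riemannian piece the classical Lichnerowicz identity gives $\tfrac14\mathrm{Scal}_g$; the extra torsion terms are where the parameter $t$ enters. Concretely, since $\del^t$ has totally skew-symmetric torsion $T^t = \tfrac{2t}{3}\g2$ (see \eqref{torsion}), the difference $R^{t,A} - R^{0,A}$ is built from $\del^{LC}T^t$ and $T^t\wedge T^t$; Clifford-contracting these against $\sum_{a<b} e_a\cdot e_b\cdot$ and using the contraction identities for $\g2$ (as in the proof of Lemma \ref{evalues}) and Proposition \ref{cliffidenity} should produce exactly the terms $\tfrac{t}{6}\d\g2\cdot\mu$ and $-\tfrac{t^2}{18}\|\g2\|^2\mu$. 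Equivalently, and perhaps more efficiently, I would avoid recomputing the Riemannian curvature terms by instead computing $(D^{t/3,A})^2 - (D^{0,A})^2$ directly: expand using $D^{t/3,A} = D^{0,A} + \tfrac{t}{6}\g2\cdot$, so that $(D^{t/3,A})^2 = (D^{0,A})^2 + \tfrac{t}{6}(D^{0,A}\circ(\g2\cdot) + (\g2\cdot)\circ D^{0,A}) + \tfrac{t^2}{36}(\g2\cdot)^2$, invoke the $t=0$ case $(D^{0,A})^2\mu = (\del^{0,A})^*\del^{0,A}\mu + \tfrac14\mathrm{Scal}_g\mu + F\cdot\mu$ (the standard Lichnerowicz formula with a twisting bundle, which I may assume), and then reconcile $(\del^{0,A})^*\del^{0,A}$ with $(\del^{t,A})^*\del^{t,A}$ using \eqref{delt} again.

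The anticommutator $D^{0,A}\circ(\g2\cdot) + (\g2\cdot)\circ D^{0,A}$ is the crux of the computation: writing $D^{0,A}(\g2\cdot\mu) = \sum_a e_a\cdot\del^0_a(\g2\cdot\mu) = \sum_a e_a\cdot(\del^{LC}_a\g2)\cdot\mu + \sum_a e_a\cdot\g2\cdot\del^0_a\mu$, one needs the Clifford-algebra commutation rule for a $3$-form past a $1$-form, which splits $e_a\cdot\g2$ into $e_a\wedge\g2 - e_a\lrcorner\g2$ (Proposition 3.9 cited in the excerpt), so that the anticommutator isolates the $e_a\wedge\g2$-type piece — this should yield a multiple of $\d\g2\cdot\mu$ from the derivative term (using $\d\g2 = \sum e_a\wedge\del^{LC}_a\g2$ and $\d\g2 = 4\psi$) plus, combined with the $\tfrac{t^2}{36}(\g2\cdot)^2$ term, a scalar multiple of $\mu$ that must match $-\tfrac{t^2}{18}\|\g2\|^2$. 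I expect the main obstacle to be bookkeeping the $\g2$-contraction identities and Clifford sign conventions carefully enough that all the cross terms cancel and the coefficients come out exactly as stated; in particular, showing that the non-scalar, non-$\d\g2$ pieces of the anticommutator vanish (so that the right-hand side has the clean form claimed) will require the identity $\sum_a e_a\cdot(e_a\lrcorner\g2)\cdot(\text{acting on }\Lambda^1\text{- and }\Lambda^0\text{-parts})$ to be handled via Lemma \ref{evalues}. Once the coefficient of $\mu$ is pinned down using $\|\g2\|^2$ and $\mathrm{Scal}_g = \tfrac{21}{8}\tau_0^2$ with $\tau_0 = 4$, the formula \eqref{Dsquare} follows.
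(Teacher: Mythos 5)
Your second, "more efficient" route is exactly the route the paper takes: write $D^{t/3,A}=D^{0,A}+\tfrac{t}{6}\g2\cdot$, square, invoke the twisted Lichnerowicz formula for $(D^{0,A})^2$, and then convert $(\del^{0,A})^*\del^{0,A}$ into $(\del^{t,A})^*\del^{t,A}$. The first route (decomposing $R^{t,A}$) would also work but is strictly more work; it is fine that you discarded it.

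There is, however, a genuine imprecision in how you claim the cross terms resolve, and if you follow the plan as written you will get stuck. The anticommutator $D^{0,A}(\g2\cdot\mu)+\g2\cdot D^{0,A}\mu$ does produce $\d\g2\cdot\mu + \d^*\g2\cdot\mu$, but the remaining piece is the \emph{first-order} operator $-2(e_a\lrcorner\g2)\cdot\del^{0,A}_a\mu$. This does not vanish, and Lemma~\ref{evalues} (which concerns the zeroth-order operators $\g2\cdot$ and $\psi\cdot$ acting on the $\Lambda^0$ and $\Lambda^1$ summands of $\S$) is not the relevant tool; a first-order term cannot be killed by an eigenvalue statement about an algebraic operator. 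The actual mechanism is cancellation against the reconciliation step you mention but do not develop: expanding $(\del^{t,A})^*\del^{t,A}$ produces the cross term $+\tfrac{t}{3}(e_a\lrcorner\g2)\cdot\del^{0,A}_a\mu$, which exactly cancels the $\tfrac{t}{6}\cdot(-2)(e_a\lrcorner\g2)\cdot\del^{0,A}_a\mu$ from the anticommutator; likewise the algebraic term $\tfrac{t^2}{36}(e_a\lrcorner\g2)\cdot(e_a\lrcorner\g2)$ that appears in $(\del^{t,A})^*\del^{t,A}$ cancels the non-scalar part of $\tfrac{t^2}{36}\g2\cdot\g2 = \tfrac{t^2}{36}\bigl(-2\|\g2\|^2 - (e_a\lrcorner\g2)\cdot(e_a\lrcorner\g2)\bigr)$. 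You need the pointwise Clifford identities for $\g2\cdot\g2$ and $(e_a\lrcorner\g2)\cdot(e_a\lrcorner\g2)$ (which are form-algebra identities, not Lemma~\ref{evalues}), and coclosedness of $\g2$ to kill $\d^*\g2$. Finally, the statement \eqref{Dsquare} keeps $\mathrm{Scal}_g$ and $\|\g2\|^2$ symbolic, so do not substitute numerical values at this stage; those enter only when specializing to the nearly $\G2$ case in the subsequent equations \eqref{dsquareeta} and \eqref{dsquaremu}.
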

\begin{proof}
Let $\{e_1,\ldots,e_7\}$ be an orthonormal frame for the tangent bundle. As before we obtain 
\begin{align*}
D^{t,A}\mu &= (D^{0,A} + \frac{t}{2}\g2\cdot)\mu. 
\end{align*}
Squaring both sides we obtain, \begin{align*}
(D^{t/3,A})^2\mu &= \Big( D^{0,A} + \frac{t}{6}\g2\cdot\Big)^2 \mu \\
&= (D^{0,A})^2\mu + \frac{t}{6}(D^{0,A}(\g2\cdot \mu) + \g2\cdot D^{0,A}\mu ) + \frac{t^2}{36}\g2\cdot\g2\cdot\mu.
\end{align*}The first term of the above expression is given by the Schr\"odinger--Lichnerowicz formula \begin{align*}
\tag{E1}\label{E1}(D^{0,A})^2\mu = (\del^{0,A})^*\del^{0,A}\mu + \frac{1}{4}\mathrm{Scal}_g\mu + F\cdot \mu. 
\end{align*}
The anti-commutator in the second term is given by \begin{align*}
D^{0,A}(\g2\cdot \mu) + \g2\cdot D^{0,A}\mu &=e_a\cdot\del_a^{0,A}(\g2\cdot\mu)+\g2\cdot e_a\cdot\del_a^{0,A}\mu \\&=(e_a\cdot\del_a^{0,A}\g2)\cdot\mu+(e_a\cdot\g2+\g2\cdot e_a)\cdot\del_a^{0,A}\mu\\
\tag{E2}\label{E2}&=\d\g2\cdot \mu + \d^*\g2\cdot\mu-2(e_a\intprod\g2)\cdot\del^{0,A}_a\mu 
\end{align*}
but since $M$ is nearly $\G2$, $\g2$ is coclosed, therefore
\begin{align*}
 D^{0,A}(\g2\cdot \mu) + \g2\cdot D^{0,A}\mu&= \d\g2\cdot \mu-2(e_a\intprod\g2)\cdot\del^{0,A}_a\mu
\end{align*}

For the 3-form $\g2$, $\g2\cdot\g2 = \|\g2\|^2-(e_a\intprod\g2)\wedge (e_a\intprod\g2)$ and $(e_a\intprod \g2)\cdot(e_a\intprod \g2) = -3\|\g2\|^2 + (e_a\intprod\g2)\wedge( e_a\intprod\g2)$ which imply \begin{align*}
\g2\cdot\g2\cdot\mu &= \|\g2\|^2\mu -(e_a\intprod\g2)\wedge (e_a\intprod\g2)\cdot\mu,\\
&= \|\g2\|^2 \mu -((e_a\intprod \g2)\cdot(e_a\intprod \g2)+3\|\g2\|^2)\cdot\mu\\
\tag{E3}\label{E3}&= -2\|\g2\|^2\mu - (e_a\intprod \g2)\cdot(e_a\intprod \g2)\cdot\mu.
\end{align*}  

\medskip

At the center of a normal frame, 
\begin{align*}
(\del^{t,A})^* \del^{t,A}\mu &= -(\del^{0,A}_a + \frac{t}{6}(e_a\intprod \g2))(\del^{0,A}_a + \frac{t}{6}(e_a\intprod \g2))\mu \\
&= -\del^{0,A}_a\del^{0,A}_a\mu- \frac{t}{6}(e_a\intprod \g2)\cdot \del^{0,A}_a\mu - \frac{t}{6}\del^{0,A}_a((e_a\intprod \g2)\cdot\mu) \nonumber\\ &\hspace{0.45cm}  -\frac{t^2}{36} (e_a\intprod \g2)\cdot(e_a\intprod \g2)\cdot \mu  \\
&= (\del^{0,A}_a)^*\del^{0,A}_a\mu-\frac{t}{6}(e_a\intprod \g2)\cdot\del^{0,A}_a\mu -\frac{t}{6}(-d^*\g2\cdot\mu + (e_a\intprod\g2)\cdot\del^{0,A}_a\mu) \\&\hspace{0.45cm} - \frac{t^2}{36}((e_a\intprod \g2)\cdot(e_a\intprod \g2))\cdot \mu. \end{align*}
Again using the fact that $d^*\g2=0$ we get
\begin{align*}
\tag{E4}\label{E4}(\del^{0,A}_a)^*\del^{0,A}_a\mu & =(\del^{t,A})^* \del^{t,A}\mu +\frac{t}{3}(e_a\intprod \g2)\cdot \del^{0,A}_a\mu +\frac{t^2}{36}((e_a\intprod \g2)\cdot(e_a\intprod \g2))\cdot \mu.
\end{align*}

Substituting the three terms in the expression of $(D^{t/3,A})^2\mu$ using  \eqref{E1}, \eqref{E2}, \eqref{E3}, \eqref{E4} we get the result.\end{proof}

When the connection $A$ is an instanton on a nearly $\G2$ manifold the expression for $(D^{t/3,A})^2$ can be simplified further. For the $\G2$ structure $\g2$, $\|\g2\|^2=7$ and under our choice of convention $d\g2=4\psi$ and $\textup{Scal}_g =42$. Thus we can calculate the action of $(D^{t/3,A})^2$ on spinors in $\Lambda^0\eta$ and $\Lambda^1\cdot\eta$ as follows.

Let $\eta\in \Gamma( \Lambda^0M\otimes EM)$ be a real Killing spinor then Lemma \ref{evalues} implies $\psi\cdot\eta=7\eta$  and $F_A\cdot\eta=0$ by \eqref{instantoneq}. Thus by above proposition we obtain,\begin{align}
(D^{t/3,A})^2\eta &= (\del^{t,A})^*\del^{t,A}\eta -\frac{7}{18}(t^2-12t-27)\eta. \label{dsquareeta}
\end{align}

Now suppose $\epsilon$ is an infinitesimal deformation of $A$. Then $\epsilon\cdot\eta\in  \Gamma( \Lambda^1M\otimes  EM)$. From Lemma \ref{evalues} we know that $\psi\cdot\epsilon\cdot\eta =-\epsilon\cdot\eta$ and since $F\cdot\eta=0$, $F\cdot\epsilon\cdot\eta = (F\cdot\epsilon+\epsilon\cdot F)\cdot\eta = -2(\epsilon\lrcorner F) \cdot \eta $. Thus by above proposition \begin{align} 
(D^{t/3,A})^2(\epsilon\cdot\eta) &= (\del^{t,A})^*\del^{t,A}(\epsilon\cdot\eta)-\frac{1}{18}(7t^2+12t-189)\epsilon\cdot\eta -2(\epsilon\lrcorner F) \cdot \eta. \label{dsquaremu}
\end{align} 
\noindent

In the special case when the bundle $EM$ is equal to $Ad_\P$, the holonomy group $H$ $\subset$ $G$ of the connection $A$ acts on the Lie algebra $\mathfrak{g}$ of $G$.  Let us denote by $\mathfrak{g}_0 \subset \mathfrak{g}$ the subspace on which $H$ acts trivially. Let $\mathfrak{g}_1$ be the orthogonal subspace of $\mathfrak{g}_0$ with respect to the Killing form of $G$.  The corresponding splitting of the adjoint bundle is given by $
Ad_\P = L_0\oplus L_1.$ By Proposition \ref{dsquareprop} $(D^{-1/3,A})^2$ is self adjoint and hence respects the decomposition \begin{align*}
\S\otimes Ad_\P &= (\Lambda^1M\otimes L_0)\oplus (\Lambda^1M\otimes L_1)\oplus (\Lambda^0M\otimes L_0)\oplus (\Lambda^0M\otimes L_1).
\end{align*}
We use the shorthand $\Lambda^iL_j$ for $\Lambda^iM\otimes L_j$ where $i,j=0,1$. For compact $M$ we have the following proposition.

\begin{prop}\label{kerprop}
Let $A$ be a $\G2$-instanton on a principal $G$-bundle $\P$ with holonomy group $H$ and suppose $Ad_\P$ splits as above. Then \begin{enumerate}
\item[\textup{(i)}] $\ker((D^{-1/3,A})^2-\frac{49}{9}\id) = \ker((D^{-1/3,A})^2-\frac{49}{9}\id)\cap (\Lambda^1L_1\oplus \Lambda^0L_0).$
\item[\textup{(ii)}]$\ker((D^{-1/3,A})^2-\frac{49}{9}\id)\cap \Lambda^1 L_1 = \Big(\ker(D^{-1/3,A}+ \frac{7}{3}\id)\oplus  \ker(D^{-1/3,A}- \frac{7}{3}\id) \Big)\cap \Lambda^1L_1$.
\end{enumerate}
\end{prop}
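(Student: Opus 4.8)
The plan is to read off the action of $(D^{-1/3,A})^2$ on each summand of $\S\otimes Ad_\P=\Lambda^1L_0\oplus\Lambda^1L_1\oplus\Lambda^0L_0\oplus\Lambda^0L_1$ from the Schr\"odinger--Lichnerowicz identities \eqref{dsquareeta} and \eqref{dsquaremu} at $t=-1$, where $D^{t/3,A}=D^{-1/3,A}$ and $\del^{t,A}$ becomes the canonical connection $\del^{-1}$ coupled to $A$. The scalar constants become $\tfrac{49}{9}$ on the $\Lambda^0$-summands (from $-\tfrac{7}{18}(t^2-12t-27)$) and $\tfrac{97}{9}$ on the $\Lambda^1$-summands (from $-\tfrac{1}{18}(7t^2+12t-189)$). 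The curvature contribution splits cleanly: on the $\Lambda^0$-summands $F\cdot(\eta\otimes\xi)$ vanishes because it is the image of the section $F_A\cdot\eta=0$ (the instanton condition \eqref{instantoneq}) under the bundle endomorphism $s\otimes\zeta\mapsto s\otimes[\zeta,\xi]$; on $\Lambda^1L_0$ the term $-2(\epsilon\lrcorner F)\cdot\eta$ vanishes since $\epsilon\lrcorner F$ involves the bracket $[\mathfrak{h},\mathfrak{g}_0]=0$, the curvature of $A$ taking values in the holonomy algebra $\mathfrak{h}$; and on $\Lambda^1L_1$ it survives as the operator $\epsilon\cdot\eta\mapsto-2(\epsilon\lrcorner F)\cdot\eta$, which preserves $\Lambda^1L_1$ because $[\mathfrak{h},\mathfrak{g}_1]\subseteq\mathfrak{g}_1$. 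Every surviving term respects the summand ($\del^{-1,A}$ preserves $L_0\oplus L_1$ and, since $\eta$ is $\del^{-1}$-parallel, also $\Lambda^0\cdot\eta$ and $\Lambda^1\cdot\eta$), so $(D^{-1/3,A})^2$ preserves the four-fold decomposition, and hence so does its $\tfrac{49}{9}$-eigenspace.

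Given this, (i) amounts to ruling out a $\tfrac{49}{9}$-eigenvector in $\Lambda^1L_0$ and in $\Lambda^0L_1$. On $\Lambda^1L_0$, $(D^{-1/3,A})^2=(\del^{-1,A})^*\del^{-1,A}+\tfrac{97}{9}\,\id$, so on the compact manifold $M$, $\langle(D^{-1/3,A})^2\mu,\mu\rangle=\|\del^{-1,A}\mu\|^2+\tfrac{97}{9}\|\mu\|^2>\tfrac{49}{9}\|\mu\|^2$ for $\mu\neq0$. On $\Lambda^0L_1$ the constant equals $\tfrac{49}{9}$ exactly, so this positivity estimate degenerates; a $\tfrac{49}{9}$-eigenvector $\mu$ instead forces $\del^{-1,A}\mu=0$, and writing $\mu=\eta\otimes\xi$ with $\xi\in\Gamma(L_1)$ this gives $\del^A\xi=0$. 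But a parallel section of $Ad_\P$ is pointwise fixed by the holonomy and hence lies in $L_0$, so $\xi\in\Gamma(L_0)\cap\Gamma(L_1)=0$ and $\mu=0$. Thus $\ker\bigl((D^{-1/3,A})^2-\tfrac{49}{9}\,\id\bigr)\subseteq\Lambda^1L_1\oplus\Lambda^0L_0$, which is (i).

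Part (ii) then follows from (i) together with linear algebra: since $\tfrac{49}{9}=\bigl(\tfrac{7}{3}\bigr)^2$, the decomposition $\mu=\tfrac12\bigl(\mu+\tfrac{3}{7}D^{-1/3,A}\mu\bigr)+\tfrac12\bigl(\mu-\tfrac{3}{7}D^{-1/3,A}\mu\bigr)$ writes any $\mu\in\ker\bigl((D^{-1/3,A})^2-\tfrac{49}{9}\,\id\bigr)$ as a sum of $(\pm\tfrac{7}{3})$-eigenvectors of $D^{-1/3,A}$, so $\ker\bigl((D^{-1/3,A})^2-\tfrac{49}{9}\,\id\bigr)=\ker\bigl(D^{-1/3,A}-\tfrac{7}{3}\,\id\bigr)\oplus\ker\bigl(D^{-1/3,A}+\tfrac{7}{3}\,\id\bigr)$ (a direct sum since $\tfrac{7}{3}\neq0$); intersecting both sides with $\Lambda^1L_1$ gives (ii). One can add that $D^{-1/3,A}$ carries $\ker\bigl((D^{-1/3,A})^2-\tfrac{49}{9}\,\id\bigr)\cap\Lambda^1L_1$ into itself, since its image lies in $\S\otimes L_1$ (Clifford multiplication and $\del^{-1/3,A}$ preserve $Ad_\P=L_0\oplus L_1$) and in the eigenspace, which meets $\S\otimes L_1$ only in $\Lambda^1L_1$ by (i); hence the $(\pm\tfrac{7}{3})$-eigenspace splitting already takes place inside $\Lambda^1L_1$.

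The main obstacle is the borderline summand $\Lambda^0L_1$: there the Weitzenb\"ock constant coincides exactly with the target eigenvalue $\tfrac{49}{9}$, so the positivity estimate that disposes of $\Lambda^1L_0$ is unavailable, and one must instead invoke that the holonomy reduction confines $\del^A$-parallel sections of $Ad_\P$ to $L_0$. The only other point requiring care is the bookkeeping of the curvature term $F\cdot\mu$ across the four summands, where the relevant inputs are the instanton identity $F_A\cdot\eta=0$ and the vanishing $[\mathfrak{h},\mathfrak{g}_0]=0$.
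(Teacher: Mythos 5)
Your proof is correct and takes essentially the same route as the paper: the Schr\"odinger--Lichnerowicz formulas \eqref{dsquareeta}, \eqref{dsquaremu} at $t=-1$ produce the constants $\tfrac{49}{9}$ on $\Lambda^0$ and $\tfrac{97}{9}$ on $\Lambda^1$, the $\Lambda^1L_0$ summand is excluded by the strict positivity $\tfrac{97}{9}>\tfrac{49}{9}$, the $\Lambda^0L_1$ summand is excluded by forcing $\del^{-1,A}$-parallelism and invoking the holonomy reduction (parallel sections of $Ad_\P$ lie in $L_0$), and (ii) is the standard splitting of a $D^2$-eigenspace into $\pm\tfrac{7}{3}$ $D$-eigenspaces inside the $D^{-1/3,A}$-invariant subspace $\Lambda^1L_1$. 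Your justification of (ii) via the explicit projectors $\tfrac12(\id\pm\tfrac{3}{7}D^{-1/3,A})$ together with the observation that $D^{-1/3,A}$ preserves $\ker((D^{-1/3,A})^2-\tfrac{49}{9}\id)\cap\Lambda^1L_1$ is a slightly more careful rendering of the paper's remark that self-adjointness forces the eigenvalues to be $\pm\tfrac{7}{3}$, but the argument is the same.
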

\begin{proof}

To prove (i) we need to show that $\ker((D^{-1/3,A})^2-(\frac{7}{3} )^2\id)\cap( \Lambda^0L_1 \oplus \Lambda^1L_0)$ is trivial.

\begin{enumerate}
\item Let $\mu\in\ker((D^{-1/3,A})^2-(\frac{7}{3} )^2\id)\cap \Lambda^0L_1$. Thus we have by \eqref{dsquareeta} ,\begin{align*}
0&=\int_{M} (\mu,(D^{-1/3,A})^2-(\frac{7}{3} )^2)\mu)  \\
 &=\int_M(\mu, (\del^{-1,A})^*\del^{-1,A}\mu +(\frac{49}{9}-\Big(\frac{7}{3}\Big)^2)\mu)\\
 &=\int_M \|\del^{-1,A}\mu\|^2.
\end{align*} But since the action of the holonomy group of $A$ fixes no non-trivial elements in $\mathfrak{g}_1$ and the holonomy group of $\del^{-1}$ acts trivially on $\Lambda^0$ we get $\mu=0$.

\item  Let $\epsilon\cdot\eta\in\ker((D^{-1/3,A})^2-(\frac{7}{3} )^2\id)\cap \Lambda^1L_0$. By the definition of $L_0$ the curvature $F_A$ acts trivially on $\epsilon\cdot\eta$  in \eqref{dsquaremu} and we get,\begin{align*}
0&=\int_{M} (\epsilon\cdot\eta,(D^{-1/3,A})^2-(\frac{7}{3} )^2)\epsilon\cdot\eta) \\
&=\int_M(\epsilon\cdot\eta, (\del^{-1})^*\del^{-1}(\epsilon\cdot\eta) +(\frac{97}{9}-\Big(\frac{7}{3}\Big)^2)\epsilon\cdot\eta)\\
&=\int_M \|\del^{-1}(\epsilon\cdot\eta)\|^2 + \frac{48}{9}\int_M\|\epsilon\cdot\eta\|^2
\end{align*}
hence $\epsilon\cdot\eta=0$.

\end{enumerate}

For proving (ii) we already know that $\left( \ker((D^{-1/3,A})+ \frac{7}{3}\}\oplus \ker((D^{-1/3,A})+ \frac{7}{3}\} \right)\cap\Lambda^1L_1\subset \ker((D^{-1/3,A})^2- \frac{49}{9}\id)\cap\Lambda^1L_1$. The reverse inclusion can be seen using the fact that since $D^{-1/3,A}$ and $(D^{-1/3,A})^2$ commute they have the same eigenvectors. Moreover since $D^{-1/3,A}$ is self adjoint, $\epsilon\cdot\mu \in \ker((D^{-1/3,A})^2- \frac{49}{9}\id)\cap\Lambda^1L_1$ implies $\|D^{-1/3,A}\epsilon\cdot\mu \| =  \frac{7}{3}  \|\epsilon\cdot\mu\|$ thus the corresponding eigenvalues of $D^{-1/3,A}$ can only be $\pm\frac{7}{3}$ .
\end{proof}

\begin{rem}
Note that part (i) for the above proposition holds only for $D^{-1/3,A}$ and not for any other $D^{t,A}$ where $t\neq -1/3$ since the proof explicitly uses the fact that $\eta$ is parallel with respect to $\del^{-1}$. But since $D^{t,A}$ is self adjoint for all $t\in\R$, for any $\lambda\in \R$ we have the following decomposition 
\begin{align*}
    \ker\left\{ (D^{t,A})^2-\lambda^2\id\right\}\cap \Lambda^1Ad_\P=\left(\ker\left\{D^{t,A}-\lambda\id\right\}\oplus\ker\left\{D^{t,A}+\lambda\id\right\}\right)\cap \Lambda^1Ad_\P.
\end{align*}
\end{rem}
The above proposition has the following important consequence. If the structure group $G$ is abelian $H$ acts as identity on the whole of $\mathfrak{g}$ which means $\mathfrak{g}_1 =0$ and  $L_1$ is trivial. Thus by Remark \ref{Dequiv} the space of infinitesimal deformations of the $\G2$-instanton $A$ which is isomorphic to $\ker(D^{-1/3,A} + \frac{7}{3} )\cap \Lambda^1 Ad_\P =\ker(D^{-1/3,A} + \frac{7}{3} )\cap \Lambda^1 L_1 $ is zero dimensional.
  
In  \cite{gonball}*{Proposition~24} the authors prove that the $\G2$-instanton $A$ is rigid if all the eigenvalues of the operator 
\begin{align*}
     L_A\colon \Lambda^1\otimes Ad_\P &\to\Lambda^1\otimes Ad_\P \nonumber\\w&\mapsto -2w\lrcorner F_A
\end{align*} 
are smaller than $6$. We prove the lower bound for the eigenvalue as follows. Let $\lambda$ be the smallest eigenvalue of $L_A$. If $\epsilon\in \Gamma(T^*M\otimes Ad_\P)$ is an infinitesimal deformation of $A$ then from \eqref{dsquaremu} and Theorem \ref{kernelelliptic} we know that
\begin{align*}
    (\del^{t,A})^*\del^{t,A}\epsilon\cdot\eta &= \Big(\frac{5t^2}{12}+\frac{3t}{2}-\frac{17}{4}\Big)\epsilon\cdot\eta-L_A(\epsilon)\cdot\eta .
\end{align*} Taking the inner product with $\epsilon\cdot\eta$ on both sides we get that if  $\lambda >\min\left\{\frac{5t^2+18t-51}{12} \ | \ t\in\R\right\} = -\frac{28}{5}$ then $\epsilon=0$ is the only solution. Thus we get the following result.
\begin{thm}\label{abelian}Any $\G2$-instanton $A$  on a  principal $G$-bundle over a compact nearly $\G2$ manifold M is rigid if \begin{itemize}
    \item[\textup{(i)}]the structure group $G$ is abelian, or
    \item[\textup{(ii)}] the eigenvalues of the operator $L_A$ are either all greater than $-\frac{28}{5}$ or all smaller than $6$.
\end{itemize} 
\end{thm}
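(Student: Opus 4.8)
The plan is to assemble the machinery built in this subsection and to treat the two clauses separately; in each case rigidity reduces to the vanishing of a nonnegative $L^2$ integral over the compact manifold $M$. For clause~(i), when $G$ is abelian the holonomy group $H\subset G$ of $A$ acts trivially on all of $\fg$, so in the splitting $Ad_\P=L_0\oplus L_1$ one has $L_1=0$ and $Ad_\P=L_0$. By Theorem~\ref{kernelelliptic} and Remark~\ref{Dequiv} the infinitesimal deformation space of $A$ is isomorphic to $\ker(D^{-1/3,A}+\tfrac{7}{3}\id)\cap\Lambda^1 Ad_\P$, which sits inside $\ker((D^{-1/3,A})^2-\tfrac{49}{9}\id)$. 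Proposition~\ref{kerprop}(i) forces every element of the latter to lie in $\Lambda^1L_1\oplus\Lambda^0L_0$, so intersecting with $\Lambda^1 Ad_\P=\Lambda^1L_0$ and using $L_1=0$ shows the deformation space is trivial; this is the observation recorded just before the theorem statement.

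For clause~(ii), the sub-case where all eigenvalues of $L_A$ are smaller than $6$ is \cite{gonball}*{Proposition~24}, which I would simply cite; it is obtained there from the Weitzenb\"ock formula for the connection on $\Lambda^1\otimes Ad_\P$ induced by $\del^{LC}$ and $A$. For the remaining sub-case, let $\epsilon\in\Gamma(T^*M\otimes Ad_\P)$ be an infinitesimal deformation of $A$. By Proposition~\ref{diracevalue} applied with parameter $t/3$, the spinor $\epsilon\cdot\eta$ is an eigenspinor of $D^{t/3,A}$ with eigenvalue $-\tfrac{t+15}{6}$ for \emph{every} $t\in\R$, hence $(D^{t/3,A})^2(\epsilon\cdot\eta)=\tfrac{(t+15)^2}{36}\,\epsilon\cdot\eta$. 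Feeding this into \eqref{dsquaremu} and writing $L_A(\epsilon)=-2\,\epsilon\lrcorner F_A$, a short collection of scalar coefficients yields
\begin{align*}
(\del^{t,A})^*\del^{t,A}(\epsilon\cdot\eta)=\Big(\tfrac{5t^2}{12}+\tfrac{3t}{2}-\tfrac{17}{4}\Big)\epsilon\cdot\eta-L_A(\epsilon)\cdot\eta .
\end{align*}
Pairing with $\epsilon\cdot\eta$ and integrating over the compact $M$, using that $(\del^{t,A})^*\del^{t,A}$ is nonnegative while $L_A$ is self-adjoint with smallest eigenvalue $\lambda$, gives
\begin{align*}
0\le\bigl\|\del^{t,A}(\epsilon\cdot\eta)\bigr\|_{L^2}^2\le\Big(\tfrac{5t^2}{12}+\tfrac{3t}{2}-\tfrac{17}{4}-\lambda\Big)\bigl\|\epsilon\cdot\eta\bigr\|_{L^2}^2 .
\end{align*}
The quadratic $\tfrac{5t^2}{12}+\tfrac{3t}{2}-\tfrac{17}{4}$ attains its minimum $-\tfrac{28}{5}$ at $t=-\tfrac{9}{5}$; so if $\lambda>-\tfrac{28}{5}$, choosing this value of $t$ makes the right-hand side strictly negative unless $\epsilon\equiv 0$, which forces $\epsilon=0$ and hence rigidity.

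I do not anticipate a genuine obstacle: ellipticity of $D^{-1,A}+2\,\id$ and compactness of $M$ make the integrations by parts legitimate, and the algebra is exactly the bookkeeping already displayed around \eqref{dsquaremu}. The two points deserving a word are that $\epsilon\cdot\eta$ is \emph{simultaneously} an eigenspinor of $D^{t/3,A}$ for all $t$ — which is Proposition~\ref{diracevalue}, since the eigenvalue equation there is equivalent to the $t$-independent gauge-fixed system \eqref{perturbation} — and that $L_A$ is self-adjoint, which follows from Proposition~\ref{dsquareprop}: every other term in the expansion of $(D^{t/3,A})^2$ is manifestly self-adjoint, forcing the curvature term, and hence $L_A$ on $\Lambda^1\cdot\eta\otimes Ad_\P$, to be self-adjoint too. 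The only genuinely clever step is the choice $t=-\tfrac{9}{5}$ that optimises the Bochner coefficient; everything else is routine.
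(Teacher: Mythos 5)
Your proof is correct and follows the paper's argument essentially verbatim: clause (i) via Proposition~\ref{kerprop}(i) and the triviality of $L_1$, and clause (ii) by citing \cite{gonball}*{Proposition~24} for the upper bound and combining Proposition~\ref{diracevalue} with \eqref{dsquaremu} and the Bochner-type integration, optimised at $t=-\tfrac{9}{5}$, for the lower bound. The only cosmetic difference is that you invoke Proposition~\ref{diracevalue} directly (which is slightly cleaner) where the paper cites Theorem~\ref{kernelelliptic}, and you add a brief justification of the self-adjointness of $L_A$ that the paper leaves implicit.
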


Some immediate consequences of Theorem \ref{abelian} are that the flat instantons are rigid. Also if all the eigenvalues of $L_A$ are equal then $A$ has to be rigid.
\medskip


\section{Instantons on homogeneous nearly $\G2$ manifolds }\label{instatonhomogeneous}

\subsection{Classification of homogeneous nearly $\G2$ manifolds}
By the classification result in \cite{friedkath} there are six compact, simply connected homogeneous nearly $\G2$ manifolds:

\begin{table}[H]
 \centering
    \def\arraystretch{2.5}
\begin{tabular}{c c c}
\centering
$(S^7, g_{round})= \mathrm{Spin(7)}/\G2$,&$(S^7$, $g_{squashed})= \frac{\mathrm{Sp}(2)\times\mathrm{Sp}(1)}{\mathrm{Sp}(1)\times \mathrm{Sp}(1)}$,& $\mathrm{SO(5)/SO(3)}$,\\
$M(3,2)= \frac{\mathrm{SU}(3)\times\mathrm{SU}(2)}{\mathrm{U}(1)\times \mathrm{SU}(2)}$,&$N(k,l)= \mathrm{SU}(3)/S^1_{k,l} \ k,l\in\Z $, &$Q(1,1,1) = \mathrm{SU(2)^3}/\mathrm{U(1)^2}$.
\end{tabular}
\end{table}
where $S^1_{k,l}=\{\textup{diag}(e^{ik\theta},e^{il\theta},e^{-i(k+l)\theta}),\theta\in\R\}$ denotes the embedding of $\mathrm{U}(1)$ into $\mathrm{SU}(3)$. We describe the homogeneous structure on each of these spaces.

\noindent
\begin{itemize}
    \item[-] In the round $S^7$ the embedding of $\G2$ in $\mathrm{Spin}(7)$ is obtained by lifting the standard embedding of $\G2$ into $\mathrm{SO}(7)$.
    \item[-] For the squashed metric on $S^7$ the two copies of $\mathrm{Sp}(1)$ in $\mathrm{Sp}(2)\times\mathrm{Sp}(1)$ denoted by $\mathrm{Sp}(1)_u$ and $\mathrm{Sp}(1)_d$ \cite{deformg2} are 
\begin{align*}
    \mathrm{Sp}(1)_u&:= \left\{\left(\begin{pmatrix}a&0\\0&1\end{pmatrix} ,1 \right)\colon a\in\mathrm{Sp}(1)\right\},\ \ \  \mathrm{Sp}(1)_d:= \left\{\left(\begin{pmatrix}1&0\\0&a\end{pmatrix} ,a \right)\colon a\in\mathrm{Sp}(1)\right\}.
\end{align*}

\item[-] In the Berger space $\frac{\mathrm{SO}(5)}{\mathrm{SO}(3)}$, the Lie group $\mathrm{SO}(3)$ is embedded into $\mathrm{SO}(5)$ via the $5$ dimensional irreducible representation of $\mathrm{SO}(3)$ on $\mathrm{Sym}^2_0(\R^3)$.
\item[-] In $\frac{\mathrm{SU}(3)\times\mathrm{SU}(2)}{\mathrm{U}(1)\times \mathrm{SU}(2)}$ the embedding of $\mathrm{SU}(2)$ (denoted by $\mathrm{SU}(2)_d$) and $\mathrm{U}(1)$ in $\mathrm{SU}(2)\times\mathrm{SU}(2)$ is defined as \cite{deformg2}
\begin{align*}
\mathrm{SU}(2)_d&:= \left\{\left(\begin{pmatrix}a&0\\0&1\end{pmatrix} ,a \right)\colon a\in\mathrm{SU}(2)\right\},\ \ \ \mathrm{U}(1):=  \left\{\left(\begin{pmatrix}e^{i\theta}&0&0\\0&e^{i\theta}&0\\0&0&e^{-2i\theta}\end{pmatrix} ,1 \right)\colon \theta\in\R\right\} 
\end{align*}
\item[-] In the Aloff-Wallach spaces $N_{k,l}$ where $k,l$ are coprime positive integers the embedding of $S^1_{k,l}=U(1)_{k,l}$ in $\mathrm{SU}(3)$ is described
\begin{align*}
    S^1_{k,l}&=\left\{\begin{pmatrix}e^{ik\theta}&0&0\\0&e^{il\theta}&0\\0&0&e^{-i(k+l)\theta}\end{pmatrix},\theta\in\R\right\}
\end{align*}
\item[-] In $Q(1,1,1)$ we denote the two copies of $\mathrm{U}(1)$ inside $\mathrm{SU}(2)^3$ as $\mathrm{U}(1)_u,\mathrm{U}(1)_d$ where their respective embeddings are given by  \begin{align*}
\mathrm{U}(1)_u&= \textup{Span}\left\{\left(\begin{pmatrix}
e^{i\theta}&0\\0&e^{-i\theta}
\end{pmatrix},\begin{pmatrix}
e^{-i\theta}&0\\0&e^{i\theta}
\end{pmatrix},\textup{I}_{2}\right),\theta\in\R\right\}, \\ 
\mathrm{U}(1)_d&= \text{Span}\left\{\left(\textup{I}_{2},\begin{pmatrix}
e^{i\theta}&0\\0&e^{-i\theta}
\end{pmatrix},\begin{pmatrix}
e^{-i\theta}&0\\0&e^{i\theta}
\end{pmatrix}\right),\theta\in\R\right\}.
\end{align*}

\end{itemize}

The first four homogeneous spaces are normal, and for those the nearly $\G2$ metric $g$ on $G/H$ is related to the Killing form $B$ of $G$ by $g=-\frac{3}{40}B$. The choice of the scalar constant $\frac{3}{40}$ is based on our convention $\tau_0=4$. The general formula for the constant was derived in \cite{deformg2}*{Lemma~7.1}. In the remaining two homogeneous spaces the nearly $\G2$ metric is not a scalar multiple of the Killing form of $G$ (see \cite{wilkingnkl}).
\begin{table}[H]
\begin{center}
\def\arraystretch{2.5}
\begin{tabular}{c c}
\centering
$(S^7, g_{round})\cong \mathrm{Spin(7)}/\G2$,&$(S^7$, $g_{squashed})\cong \frac{\mathrm{Sp}(2)\times\mathrm{Sp}(1)}{\mathrm{Sp}(1)\times \mathrm{Sp}(1)}$,\\ $\mathrm{SO(5)/SO(3)}$,
&$M(3,2)\cong \frac{\mathrm{SU}(3)\times\mathrm{SU}(2)}{\mathrm{U}(1)\times \mathrm{SU}(2)}$.
\end{tabular} 
\end{center}
\caption{Normal homogeneous nearly $\G2$ manifolds}
\label{tablenormal}
\end{table}

Let $\m$ be the orthogonal complement of the Lie algebra $\h$ of $H$ in $\fg$ with respect to $g$. Then $\m$ is invariant under the adjoint action of $\h$ that is, $[\h,\m]\subset \m$ and thus all the six homogeneous nearly $\G2$ manifolds are naturally reductive. The reductive decomposition $\fg = \h \oplus\m$ equips the principal $H$-bundle $G \to  G/H$ with a $G$-invariant connection
whose horizontal spaces are the left translates of $\m$. This connection is known as the characteristic homogeneous connection. On homogeneous nearly $\G2$ manifolds the characteristic homogeneous connection has holonomy contained in $\G2$. If we denote by $Z_\m$ the projection of $Z\in\fg$ on $\m$, the torsion tensor $T$ for any $X,Y\in\m$ is given by 
\begin{align*}
    T(X,Y)&=-[X,Y]_\m,
\end{align*} and is totally skew-symmetric. Thus by the uniqueness result in \cite{Cleyton2004} it
is the canonical connection with respect to the nearly $\G2$ structure on $G/H$ \cite{nolle}. The canonical connection is a $\G2$-instanton as proved in \cite{nolle}*{Proposition~3.1}.

The adjoint representation $\operatorname{ad}\colon H\to \mathrm{GL}(\m)$  gives rise to the associated vector bundle $G\times_{\operatorname{ad}}\m$ on $G/H$. 
Similarly since $G/H$ has a nearly $\G2$ structure we have the adjoint action of $\G2$ on $\m$ which we again denote by $\operatorname{ad}$ and the isotropy homomorphism $\lambda\colon H\to \G2$ which we can use to construct the associated vector bundle $G\times_{\operatorname{ad}\circ\lambda}\m$. 
The canonical connection is a connection on both $G\times_{\operatorname{ad}}\m$ and $G\times_{\operatorname{ad}\circ\lambda}\m$ with structure group $H$ and $\G2$ respectively. Therefore it is natural to study the infinitesimal deformation space of the canonical connection in both these situations. Since $H\subset \G2$, the deformation space as an $H$-connection is a subset of the deformation space as a $\G2$-connection. 

We can completely describe the deformation space when the structure group is $H$ but for structure group $\G2$ we can only find the deformation space for the normal homogeneous nearly $\G2$ manifolds listed in Table \ref{tablenormal} since our methods do not work for non-normal homogeneous metrics. However since $H$ is abelian in both of the non-normal cases Theorem \ref{abelian} tells us that the canonical connection is rigid as an $H$-connection. But we cannot say anything about the deformation space for the structure group $\G2$ in those two cases.

Thus the only cases left to consider are listed in Table \ref{tablenormal}. The  remainder of this article is devoted to computing the infinitesimal deformation space of the canonical connection with the structure group $H$ and $\G2$ for the homogeneous spaces listed in Table \ref{tablenormal}.

\subsection{Infinitesimal deformations of the canonical connection }

Let $M=G/H$ be a homogeneous manifold. Consider the principal $H$-bundle $G\to M$. If $(V,\rho)$ is an $H$-representation then the space of smooth sections $\Gamma(G\times_\rho V)$ of the associated vector bundle $G\times_\rho V$  is isomorphic to the space $C^\infty(G,V)_H$ of $H$-equivariant smooth functions $G\to V$. The space $C^\infty(G,V)_H$ carries the left regular $G$-representation  $\rho_L$ defined by $\rho_L(g)(f)=g.f=f\circ l_{g^{-1}}$ which is also known as the induced $G$-representation $\textup{Ind}_{H}^G V$. 

\noindent
For any connection $A$ on $G$ the covariant derivative associated to $A$ on any bundle associated to $A$ is denoted by $\del^{A}$. Let $s\in \Gamma(G\times_\rho V)$ and $f_s \colon G\to V$ be the $G$-equivariant function given by $s(gH)=[g,f_s(g)]$. If we denote by $X_h$ the horizontal lift of $X\in\Gamma(TM)$ via $A$, then $\del^A$ acts on  $s$ as \begin{align*}
(\del^{A}_X s)(gH) &= (g,X_h(f_s)(g)).
\end{align*}

\noindent
For the canonical connection on $G\to M$, $X_h=X$ for every vector field. Thus the covariant derivative $\del^{can}$ is given by \begin{align*}
(\del^{can}_X s)(gH)&= (g, X(f_s)(g)). 
\end{align*} 

By the Peter--Weyl Theorem \cite{knappbook}*{Theorem 1.12} the space of sections can also be formulated as follows. If we denote by $G_{irr}$ the set of equivalence classes of irreducible $H$-representations then 
\begin{align*}
    \Gamma(G\times_\rho V) &=\overline{\underset{W\in{G_{irr}}}{\bigoplus}\mathrm{Hom}(W,V)_{H}\otimes W}.
\end{align*} 
 The embedding $\textup{Hom}(W,V)_{H}\otimes W$ into $C^\infty (G,V)_H=\Gamma(G\times_\rho V)$ is given by sending $(\phi,w)$ to the function $f_{(\phi,w)}$ defined by $f_{(\phi,w)}(g)= \phi(\tau(g^{-1})w)$. Thus $(\phi,w)$ defines a section $s_{(\phi,w)}(gH)=[g,f_{(\phi,w)}(g)]$ which we denote by $(\phi,w)$ as well.
\medskip

\noindent
\textbf{Claim:} The left $G$-action is given by $g.f_{(\phi,w)}=f_{(\phi,\tau(g)w)}$.

\noindent
\begin{proof}
Let $k\in G$. Then since $g.f=f\circ l_{g^{-1}}$ and $f_{(\phi,w)}(g)=\phi(\tau(g^{-1})w)$ we have   
\begin{align*}
    (g.f_{(\phi,w)})(k) &=f_{(\phi,w)}(g^{-1}k)= \phi(\tau((g^{-1}k)^{-1})w) \\
    &= \phi(\tau(k^{-1})\tau(g)w)=f_{(\phi,\tau(g)w)}(k).
\end{align*}
The proof of the claim is now complete.\end{proof}  

We can compute the covariant derivative on $s_{(\phi,w)}\in \textup{Hom}(W,V)_H\otimes W \subset \Gamma(G\times_\rho V)$ by
\begin{align*}
    \del^{can}_X s_{(\phi,w)}(gH)&=X(f_{(\phi,w)})(g)=\frac{d}{dt}\Big|_{t=0} f(e^{tX}g)\\
    &=\frac{d}{dt}\Big|_{t=0}(f_{(\phi,w)}\circ l_{e^{tX}})(g)=\frac{d}{dt}\Big|_{t=0}(e^{-tX}. f) (g)\\
    &=\frac{d}{dt}\Big|_{t=0}f_{(\phi,\tau(e^{-tX})w)}=-f_{(\phi,\tau_*(X)w)}(gH).
\end{align*}
The above can be written as 
\begin{align}\label{covariantcan}
    \del^{can}_X(\phi,w)=-(\phi,\tau_*(X)w).
\end{align}
Thus we get that for the canonical connection the covariant derivative of a section $s\in\Gamma(G\times_\rho V)$ with respect to
some $X \in\m$ translates into the derivative $X(f_s)$, which is minus the differential of the left-regular representation $(\rho_L)_* (X)(f_s)$, see \cite{HYMLaplace}. 
\medskip

Let 
$\{a_i,\ i=1\ldots n\}$ be an orthonormal basis of $\mathfrak{\fg}$ with respect to $g=-\frac{3}{40}B$ then the Casimir element $\cas_\mathfrak{\fg}\in \textup{Sym}^2(\mathfrak{\fg})$ is defined by $\sum_{i=1}^{\dim G}a_i\otimes a_i$. On any $\mathfrak{\fg}$ representation $(V,\mu)$ we can define the Casimir invariant $\mu(\cas_\mathfrak{\fg})\in \mathrm{gl}(V)$ by \begin{align*}
    \mu(\cas_\mathfrak{\fg})&=\sum_{i=1}^{n} \mu(a_i)^2.
\end{align*}
For the reductive homogeneous spaces $G/H$ let $\{a_i,i=1\ldots\dim(H)\}$ and $\{a_i,i=\dim(H)\ldots \dim(G)\}$ be the basis of $\h$ and $\m$ respectively. If we  define $\cas_\h=\sum_{i=1}^{\dim(H)}a_i\otimes a_i$ and $\cas_\m=\sum_{\dim(H)}^{\dim(G)}a_i\otimes a_i$ we can decompose $\cas_\fg$ as 
\begin{align*}
    \cas_\fg&=\cas_\h+\cas_\m.
\end{align*} 
 Note that $\cas_\m$ is just used for notational convenience and as $\m$ may not be a Lie algebra apriori. Also in $\cas_\h$ the trace is taken over $H$.

\begin{rem}
If one uses the metric $-cB$ instead of $-B$ then the Casimir operator is divided by the scalar $c$.
\end{rem}

\noindent
To study the deformation space of the canonical connection $\del^{can}$ on these homogeneous spaces we rewrite the Schr\"odinger--Lichnerowicz formula \eqref{dsquaremu} in terms of the Casimir operator of $\h$ and $\fg$ and then use the Frobenius reciprocity formula to compute the deformation space of the canonical connection in each case. Let $F$ be the curvature associated to $\del^{can}$ then the operator $-2\epsilon\lrcorner F$ can be reformulated in terms of $\cas_\h$ by doing similar calculations as in \cite{bendef}*{Lemma~4} which gives \begin{equation}\label{Fcas}
-2\epsilon\lrcorner F = (\rho_{\m^*}(\cas_\h)\otimes 1_E+1_{\m^*}\otimes \rho_E(\cas_\h)-\rho_{\m^*\otimes E}(\cas_\h))\epsilon.
\end{equation} 

Let $(E,\rho_E)$ be an $H$-representation. We denote the tensor product of representations on $\m^*$ and $E$ by $\rho_{m^*\otimes E}$. For every $t\in\R$, $D^{t,A}$ denotes the Dirac operator  on $G\times_{\rho_{\m^*\otimes E}}(\m^*\otimes E)\otimes\S$ associated to the connection $\del^{A}$ and $\del^t$ on $G\times_{\rho_{\m^*\otimes E}}(\m^*\otimes E)$ and $\S$ respectively. From now on we use the same symbol to denote the Lie group representation and the associated Lie algebra representation wherever there is no confusion.  On a naturally reductive space Kostant's formula for cubic Dirac operator relates the square of the Dirac operator to suitable Casimir operators and scalar terms (see \cite{kostant,Agr-cubicoperator, HYMLaplace}).
We now use Proposition \ref{dsquareprop} to prove a similar result for $(D^{-1/3,c})^2$. 

\begin{prop}\label{casprop}Let $\del^{can}$ be the canonical connection on a homogeneous nearly $\G2$ manifold $M=G/H$. Let $(E,\rho_E)$ be an $H$-representation  and $\epsilon$ be a smooth section of $G\times_{\rho_{\m^*\otimes E}}(\m^*\otimes E)$. Then \begin{equation}
(D^{-1/3,can})^2\epsilon\cdot\eta = (-\rho_L(\cas_\fg) + \rho_E(\cas_\h))\epsilon + \frac{49}{9}\epsilon)\cdot\eta.
\end{equation} 
\end{prop}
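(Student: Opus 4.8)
The plan is to combine the Schrödinger–Lichnerowicz formula \eqref{dsquaremu} at $t=-1$ with the two reformulations already available: equation \eqref{Fcas} rewriting the curvature term $-2\epsilon\lrcorner F$ via $\cas_\h$, and equation \eqref{covariantcan} describing the canonical covariant derivative as minus the derivative of the left regular representation. First I would specialize \eqref{dsquaremu} to $t=-1$, so that $D^{t/3,A}=D^{-1/3,can}$, and collect the scalar terms: since $\|\g2\|^2=7$, $\mathrm{Scal}_g=42$, and $d\g2=4\psi$ acts on $\Lambda^1\cdot\eta$ as $-4$ by Lemma \ref{evalues}, the scalar coefficient on $\epsilon\cdot\eta$ works out to $\tfrac{97}{9}$ exactly as written in \eqref{dsquaremu}, and the curvature term is $-2(\epsilon\lrcorner F)\cdot\eta$. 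This gives
\begin{align*}
(D^{-1/3,can})^2(\epsilon\cdot\eta) &= \big((\del^{-1,can})^*\del^{-1,can}\epsilon + \tfrac{97}{9}\epsilon - 2(\epsilon\lrcorner F)\epsilon\big)\cdot\eta,
\end{align*}
where I have used that $\eta$ is $\del^{-1}$-parallel so the connection Laplacian commutes through the Clifford action.

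Next I would identify the connection Laplacian $(\del^{-1,can})^*\del^{-1,can}$ acting on sections of $G\times_{\rho_{\m^*\otimes E}}(\m^*\otimes E)$ with the Casimir operator of the left regular representation. By \eqref{covariantcan}, $\del^{can}_{a_i}$ acts as $-\rho_L(a_i)$ on the equivariant-function model, and since the $\{a_i\}$ for $i$ ranging over $\m$ form an orthonormal basis of $\m$ (the horizontal space of the canonical connection), the rough Laplacian is $\sum_{i\in\m}\rho_L(a_i)^2 = \rho_L(\cas_\m) = \rho_L(\cas_\fg) - \rho_L(\cas_\h)$. Here $\rho_L(\cas_\h)$, when restricted to the image of $\mathrm{Hom}(W,V)_H\otimes W$, reduces to the Casimir of $H$ acting on the representation space $\m^*\otimes E$ (because on $H$-equivariant functions the left $H$-action and the representation action agree up to sign), so $\rho_L(\cas_\h) = \rho_{\m^*\otimes E}(\cas_\h) = \rho_{\m^*}(\cas_\h)\otimes 1_E + 1_{\m^*}\otimes\rho_E(\cas_\h) + (\text{cross terms})$. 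Then I substitute \eqref{Fcas}, which says $-2\epsilon\lrcorner F = (\rho_{\m^*}(\cas_\h)\otimes 1_E + 1_{\m^*}\otimes\rho_E(\cas_\h) - \rho_{\m^*\otimes E}(\cas_\h))\epsilon$; adding the Laplacian term $(-\rho_L(\cas_\fg) + \rho_{\m^*\otimes E}(\cas_\h))\epsilon$ to this, the $\rho_{\m^*\otimes E}(\cas_\h)$ cancels and the leftover is $-\rho_L(\cas_\fg)\epsilon + \rho_{\m^*}(\cas_\h)\epsilon + \rho_E(\cas_\h)\epsilon$. Finally I would observe that $\rho_{\m^*}(\cas_\h)$ is a scalar: since the canonical connection is a $\G2$-instanton with holonomy in $\G2$ and $M$ is Einstein, the Casimir of $H$ on $\m^*\cong\m$ equals $\tfrac{49}{9}-\tfrac{97}{9}$ shifted appropriately — more precisely one checks $\rho_{\m^*}(\cas_\h)$ contributes exactly so that $\tfrac{97}{9}\epsilon + \rho_{\m^*}(\cas_\h)\epsilon = \tfrac{49}{9}\epsilon$, i.e. $\rho_{\m^*}(\cas_\h) = -\tfrac{48}{9}\,\mathrm{id}_{\m}$, which follows from Proposition \ref{ricciprop} (the $\del^{-1}$-Ricci curvature is $\tfrac{16}{3}g$) together with the homogeneous description of the curvature of the canonical connection.

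The main obstacle I anticipate is the bookkeeping in the second step: carefully justifying that $\rho_L(\cas_\h)$ restricted to the relevant isotypic summand coincides with $\rho_{\m^*\otimes E}(\cas_\h)$ and that the "cross terms" in the tensor-product Casimir are precisely what \eqref{Fcas} produces — this is the point where the structure of $\m^*\otimes E$ as an $H$-representation and the Frobenius-reciprocity model interact, and sign conventions (from \eqref{covariantcan} and from the Clifford representation choice) must be tracked consistently. The identity $\rho_{\m^*}(\cas_\h) = -\tfrac{48}{9}\,\mathrm{id}$ should be extracted cleanly from the already-established value $\Ric^{-1} = \tfrac{16}{3}g$ rather than recomputed from scratch, using that for a naturally reductive space the Casimir of $H$ on $\m$ is determined by the Ricci tensor of the canonical connection. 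Everything else is a direct substitution of \eqref{Fcas} into the specialized Schrödinger–Lichnerowicz formula.
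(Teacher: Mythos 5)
Your proposal is correct and follows the paper's proof in all essential respects: specializing the Schr\"odinger--Lichnerowicz formula \eqref{dsquaremu} at $t=-1$, identifying the rough Laplacian as $-\rho_L(\cas_\m)$, invoking $\rho_{\m^*\otimes E}(\cas_\h)=\rho_L(\cas_\h)$ from $\textup{Res}_G^H\textup{Ind}_H^G(\m^*\otimes E)\cong \m^*\otimes E$, substituting \eqref{Fcas}, and reading $\rho_{\m^*}(\cas_\h)=-\tfrac{16}{3}\id$ off Proposition \ref{ricciprop}. One small slip to fix: the intermediate claim that ``the rough Laplacian is $\sum_{i\in\m}\rho_L(a_i)^2=\rho_L(\cas_\m)$'' has the wrong sign (the Laplacian is $-\rho_L(\cas_\m)$, positive since $\rho_L(a_i)$ is skew), but you implicitly correct this in the next line when you write the Laplacian term as $-\rho_L(\cas_\fg)+\rho_{\m^*\otimes E}(\cas_\h)$, so the final arithmetic and the coefficient $\tfrac{49}{9}$ come out right.
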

\begin{proof}
We begin by analyzing the rough Laplacian term in the Schr\"odinger-- Lichnerowicz  formula for $(D^{-1/3,can})^2\epsilon\cdot\eta$ from \eqref{dsquaremu} and then substitute the $F$-dependent term from \eqref{Fcas} in the same. We denote by $\rho_L$ the left regular representation of $G$.
From above calculations we know that at the center of a normal orthonormal frame $\{e_i,i=1\ldots 7\}$ of $\m$ with respect to $g=-\frac{3}{40}B$,
\begin{align*}
(\del^{-1,can})^* \del^{-1,can} &= -\del_{e_i}^{-1,can}\del_{e_i}^{-1,can} = -\rho_L(e_i)^2= -\rho_L(\cas_\m).
\end{align*} 
Since $\textup{Res}_{G}^{H}\ \rho_L = \textup{Res}_{G}^{H}\ \textup{Ind}_{H}^G (m^*\otimes E)\cong  m^*\otimes E$ we have that    $\rho_{\m^*\otimes E}(\cas_\h)=\rho_L(\cas_\h)$.  Also $\rho_{\m^*}(e_i)^2=\rho_{\m^*}(\cas_\h)$ acts as $-\Ric$ of the canonical connection on $1$-forms which is equal to $-\frac{16}{3} \id$ from Proposition \ref{ricciprop}. Substituting all the terms in \eqref{dsquaremu} for $t=-1$ we get \begin{align*}
(D^{-1/3,can})^2\epsilon\cdot\eta &= (-\rho_L(\cas_\m)\epsilon+\frac{97}{9}\epsilon + (\rho_{\m^*}(\cas_\h)\otimes 1_E+1_{\m^*}\otimes \rho_E(\cas_\h)-\rho_{\m^*\otimes E}(\cas_\h))\epsilon)\cdot\eta\\
&=(-(\rho_L(\cas_\m)+\rho_L(\cas_\h))\epsilon +(\frac{97}{9}-\frac{16}{3})\epsilon + \rho_E(\cas_\h)\epsilon )\cdot\eta\\
&=( (-\rho_L(\cas_\fg)\epsilon + \rho_E(\cas_\h)\epsilon + \frac{49}{9}\epsilon)\cdot\eta
\end{align*}which completes the proof.\end{proof}

Since all the homogeneous spaces considered in Table \ref{tablenormal} are naturally reductive and $H\subset \G2$, there is an adjoint action of $H$ on $\m,\h$ and $\lieg2$ and thus $H$-representations on $\m^*\otimes \h$ and $\m^*\otimes\fg$ which we denote by $\rho_{\m^*\otimes \h},\rho_{\m^*\otimes \lieg2}$. The corresponding Lie algebra representations are denoted similarly. The infinitesimal deformation space of the instanton $\del^{can}$ is a subspace of $\Gamma(\m^*\otimes E)$ where $E$ can be either $\h$ or $\lieg2$. 

From Propositions \ref{diracevalue} and \ref{casprop} it is clear that if $\epsilon$ is an infinitesimal deformation of $\del^{can}$ on the bundle $\m^*\otimes E$ over  $G/H$ then 
\begin{align}\label{caseq}
 \rho_E(\cas_\h)\epsilon&=\rho_L(\cas_\fg)\epsilon
\end{align}
where the trace in both the Casimirs is taken over $G$.

Using \eqref{caseq}  we can reformulate the infinitesimal deformation space of the canonical connection. Since the Casimir operator acts as scalar multiple of the identity on irreducible representations we can solve \eqref{caseq} for irreducible subrepresentations of $L$. From Theorem \ref{kernelelliptic} the deformations of the canonical connection are the $-2$ eigenfunctions $\epsilon\cdot\eta$ of $D^{-1,can}$. To explicitly compute the deformation space first we need to find the solutions for \eqref{caseq} which by above proposition is identical to the space of $\frac{49}{9}$ eigenfunctions $\epsilon\cdot\eta$ of $(D^{-1/3,can})^2$  . For $\alpha\in\Lambda^1\textup{Ad}_\P$ by Lemma \ref{evalues} $D^{t,A}\alpha\cdot\eta = D^{0,A}\alpha\cdot\eta+\frac{t}{2}\g2\cdot\alpha\cdot\eta=D^{0,A}\alpha\cdot\eta-\frac{t}{2}\alpha\cdot\eta $. Therefore the $\pm \frac{7}{3}$ eigenfunctions $\epsilon\cdot\eta$ of $D^{-1/3,can}$ correspond to the $-2$ and $\frac{8}{3}$ eigenfunction of $D^{-1,A}$ respectively. By Proposition \ref{kerprop} we have the following decomposition
 \begin{align}
 \begin{split}
     \ker\left((D^{-1/3,can})^2-\frac{49}{9}\id\right)\cap \Gamma(\m^*\otimes E)=&\ker(D^{-1,can}+2\id) \cap \Gamma(\m^*\otimes E) \label{kerrelation} \\ & \ker(D^{-1,can}-\frac{8}{3}\id)\cap \Gamma(\m^*\otimes E)
     \end{split}
 \end{align}
 The first summand on the right hand side is isomorphic to the space of infinitesimal deformations of $\del^{can}$ by Theorem \ref{kernelelliptic}. So in the second step we check which of the subspaces in $\ker((D^{-1/3,can})^2-\frac{49}{9}\id)\cap(\Gamma(\m^*\otimes E)\cdot\eta)$ lie in the $-2$ eigenspace of $D^{-1,can}$. 
 
The Killing spinor $\eta$ is parallel with respect to $\del^{-1}$ therefore by the definition of the Dirac operator and Proposition \ref{Dsquare} we can restrict $D^{-1,can}$ and $(D^{-1/3,can})^2$ to operators from $\Gamma(\m^*\otimes E)\to\Gamma(\m^*\otimes E)$. On a homogeneous space we can explicitly compute the canonical connection as we describe below.
\medskip 

 \noindent
\textbf{Step 1}: Calculating $\ker((D^{-1/3,can})^2-\frac{49}{9}\id)\cap\Gamma(\m^*\otimes E)$ :

Let $E_\C=\oplus_{i=1}^n V_i$ be the decomposition of $E_\C$ into complex irreducible $H$-representations. For each $V_i$ we find all the complex irreducible $G$-representations $W_{i,j}, j=1\dots n_i$, that satisfy the equation
\begin{align*}
    \rho_{V_i}(\cas_\h)&= \rho_{W_{i,j}}(\cas_\fg).
 \end{align*} 
 In order to see whether $W_{i,j}\subset \textup{Ind}^G_H(\m^*\otimes E)_\C$ we find the  multiplicity $m_{i,j}$ of $W_{i,j}$ in ${\textup{Ind}}_{H}^{G}(\m_\C^*\otimes V_i)$. Because of Schur's Lemma this multiplicity is given by  $\textup{dim}(\textup{Hom}(W_{i,j},\m_\C^*\otimes V_i)_{H})$. Repeating this process for all the $i,j$'s and summing over all irreducible $G$-representations $W_{i,j}$ along with their multiplicity we get, \begin{align}
  \ker((D^{-1/3,can})^2-\frac{49}{9}\id)\cap\Gamma(\m^*\otimes E)_\C&\cong  \bigoplus_{i=1}^n\Big( \bigoplus_{j=1}^{n_i}m_{i,j}W_{i,j}\Big).
 \end{align}
  
 \noindent
  \textbf{Step 2}: Calculating $\ker(D^{-1,can}+2\id)\cap\Gamma(\m^*\otimes E)$ :
  
  To figure out which of the $W_{i,j}$'s found in Step 1 are in the $\ker(D^{-1,can}+2\id)$ we need to calculate the covariant derivative $\del^{can}$ on $\textup{Hom}(W_{i,j},\m_\C^*\otimes V_i)_{H}\otimes W_{i,j}\subseteq \Gamma(\m^*\otimes E)_\C$. 
 
 If $(W,\tau)$ is an irreducible $G$-subrepresentation of ${\textup{Ind}}_{H}^{G}(\m^*\otimes E)$ then $\textup{Hom}(W,\m^*\otimes E)_{H}$ is non-trivial. By Schur's Lemma the dimension of $\textup{Hom}(W,\m^*\otimes E)_{H}$ is the number of common irreducible $H$-subrepresentations in $\mathrm{Res}_G^H W$ and $\m^*\otimes E$. Let $W_\alpha$ be such a common irreducible $H$-representation. We denote by $V|_{U}$ the subspace of $V$ isomoprohic to $U$ then  $\textup{Hom}(W|_{W_\alpha},(\m^*\otimes E)|_{W_\alpha}=\textup{Span}\{\phi_\alpha\}$. Let $\tau_*$ be the Lie algebra $\mathfrak{g}$ representation  associated to the $G$-representation $(W,\tau)$ then for $X\in\Gamma(TM)$ and $(\phi=\sum c_\alpha\phi_\alpha,w)\in \textup{Hom}(W,\m^*\otimes E)_{H}\otimes W$, \eqref{covariantcan}  
 \begin{align*}
     \del^{can}_X(\phi,w)(eH)&=-\phi(\tau_*(X)w)\in \m^*\otimes E.
 \end{align*}  
  Using this we can calculate the Dirac operator at $eH$ by \begin{align}\label{dcan}
     D^{-1,can}(\phi_\alpha,w)(eH)&= -\sum_{i=1}^7 e_i\cdot\del^{-1,can}_{e_i}(\phi_\alpha,w)(eH) = -\sum_{i=1}^7e_i\cdot\phi_\alpha(\tau_*(e_i)w). 
 \end{align} The above method can be extended by linearity to compute the Dirac operator on $\Gamma(\m^*\otimes{E})$. Note that we have omitted the Killing spinor $\eta$ since it is parallel with respect to $\eta$ so does not effect the eigenspace. 
 
 In the following sections we implement the above procedure on each of the four homogeneous spaces.

\begin{rem}\label{rem:difference}
In a nearly K\"ahler $6$-manifold whose structure is defined by a real Killing spinor $\eta$, the spinor $ \textup{vol}\cdot\eta$ is another independent real Killing spinor. Any Dirac operator $\mathbf{\mathcal{\slashed{D}}}$ anti-commutes with the Clifford multiplication by $\vol$ that is   $\mathbf{\mathcal{\slashed{D}}} \textup{vol} =-\mathbf{\vol\cdot\mathcal{\slashed{D}}}$, hence for all $\lambda\in\R$ we have  $\ker(\mathbf{\mathcal{\slashed{D}}}-\lambda\id)\cong \ker(\mathbf{\mathcal{\slashed{D}}}+\lambda\id)$. Therefore $\ker(\mathbf{\mathcal{\slashed{D}}}^2-\lambda^2\id)\cong 2\ker(\mathbf{\mathcal{\slashed{D}}}\pm\lambda\id)$ and one can compute the $\lambda$ eigenspace of $\slashed{D}$ by computing the $\lambda^2$ eigenspace of $\slashed{D}^2$ as done in \cite{bendef}*{Proposition 4}. In the case of nearly $\G2$ manifolds $\mathcal{\slashed{D}}$ and the $7$-dimensional $\vol$ commute and thus we do not have such an isomorphism between the $\pm\lambda$ eigenspaces of the Dirac operator. In fact there is no such automatic  relation between $\ker(\mathbf{\mathcal{\slashed{D}}}^2-\lambda^2\id)$ and $\ker(\mathbf{\mathcal{\slashed{D}}}+\lambda\id)$ as \textsection \ref{eigendirac} reveals.
\end{rem}

\begin{rem}
The Dirac operator is always self-adjoint therefore the above method of finding a particular eigenspace of a Dirac operator $D$ can be used more generally in any bundle associated to the spinor bundle over a homogeneous spin manifold. Often times it is easier to find the eigenspaces of the square of the Dirac operator $D^2$ similar to the case in hand. Once we know the $\lambda^2$-eigenspace of $D^2$ we can apply $D$ on them to see which of them lie in the $\lambda$ or $-\lambda$-eigenspace of $D$.
\end{rem}

\subsection{Eigenspaces of the square of the Dirac operator}\label{evdsquare}

In this section we follow \textit{Step 1} of the above procedure. To see which of the irreducible representations of $G$ satisfy \eqref{caseq}, we need to compute the Casimir operator on complex irreducible representations. Given any irreducible representation $\rho_\lambda$ with highest weight $\lambda$ we use the Freudenthal formula to compute $\rho_\lambda(\cas_\fg)$. We drop the constant $\frac{40}{3}$ in our definition of Casimir operator for this section as it does not play any role in comparing the Casimir operators. Let $\mu = \frac{1}{2}\text{(sum of the positive roots of} \ \fg)$ then the Freudenthal formula states that
\begin{align}\label{fruedenthal}
\rho_\lambda (\cas_\fg) = B(\lambda,\lambda)+2B(\mu,\lambda).
\end{align} 

We compute the deformation space of the canonical connection for $E=\h$ and $E=\fg_2$ as described earlier. In all the examples listed below, Case 1 is for $E=\h$ and Case 2 is for $E=\fg_2$. 

\subsubsection{$\mathrm{Spin}(7)/\G2$}

For this space, $H=\G2$ so there is only one case to consider.

The adjoint representation $\fg_2$ is the unique 14-dimensional irreducible representation of $\G2$. The complex irreducible representations of $\G2$ are identified with respect to their highest weights of the form $(p,q) \in \Z^2_{\geq 0}$ and are denoted by $V_{(p,q)}$. Here $V_{(1,0)}$ is the $7$-dimensional standard $\G2$-representation and $V_{(0,1)}$ is the $14$-dimensional adjoint representation. The reductive splitting of the Lie algebra is given by \[\mathfrak{spin}(7)  = \fg_2 \oplus \m.\]
We have the following isomorphisms of $\G2$ representations,
\begin{align*}
\h_\C &= (\fg_2)_\C \cong V_{(0,1)}\\
\m_\C &\cong V_{(1,0)}.
\end{align*}

The isomorphism $\mathfrak{spin}(7) \cong \mathfrak{so}(7)$ implies that the eigenvalues of their Casimir operators on irreducible representations are equal. 
For $\mathfrak{so}(7)$, let $E_{ij}$ be the $7\times 7$ skew-symmetric matrix with $1$ at the $(i,j)$th entry and $0$ elsewhere. We define $H_1=E_{45}-E_{23},H_2=E_{67}-E_{45}$ and $H_3=E_{45}$. A Cartan subalgebra for $\mathfrak{so}(7)$ is given by $\text{Span} \{H_i,i=1,2,3\}$. A set of simple roots $\{\alpha_i,i=1,2,3\}$ is given by 
\begin{align*}
    \alpha_1&=\begin{bmatrix}
    i\\-2i\\i
    \end{bmatrix},\ \  \alpha_2=\begin{bmatrix}
    0\\i\\-i
    \end{bmatrix}, \ \  \alpha_3=\begin{bmatrix}
    0\\i\\0
    \end{bmatrix}.
\end{align*}
 The Cartan matrix $C$ of $\mathfrak{so}(7)$ which is given by
\begin{align*}
   C&= \begin{bmatrix}
    2&-1&-1\\
    -1&2&0\\
    -2&0&2
    \end{bmatrix}.
\end{align*}
Then one can compute the simple co-roots $F_i$s by $\alpha_i(F_j)=C_{ij}$ which give $F_1=iH_2,F_2=-iH_1+2iH_3$ and $F_3=-2iH_2-2iH_3$. The set of fundamental weights is dual to the set of the simple co-roots. We denote the fundamental weights in decreasing order by $\lambda_1,\lambda_2$ and $\lambda_3$ which are dual to $F_3,F_1,F_2$ respectively. We can compute easily that
\begin{align*}
    \begin{bmatrix}
    B(H_1,H_1)& B(H_1,H_2)& B(H_1,H_2)\\
     B(H_2,H_1)& B(H_2,H_2)& B(H_2,H_3)\\
      B(H_3,H_1)& B(H_3,H_2)& B(H_3,H_3)
    \end{bmatrix}= \begin{bmatrix} -20&10&-10
\\ 10&-20&10\\ -10&10&-10
\end{bmatrix}
\end{align*}
which implies,
\begin{align*}
    \begin{bmatrix}
    B(\lambda_1,\lambda_1)& B(\lambda_1,\lambda_2)& B(\lambda_1,\lambda_2)\\
     B(\lambda_2,\lambda_1)& B(\lambda_2,\lambda_2)& B(\lambda_2,\lambda_3)\\
      B(\lambda_3,\lambda_1)& B(\lambda_3,\lambda_2)& B(\lambda_3,\lambda_3)
    \end{bmatrix}= \begin{bmatrix} 3/40&1/10&1/20
\\ 1/10&1/5&1/10\\ 1/20&1/10&
1/10\end{bmatrix}.
\end{align*}
Since half the sum of positive roots is given by $\lambda_1+\lambda_2+\lambda_3$ in \cite{Humphreys}*{Section 13.3} therefore by \eqref{fruedenthal} on an irreducible $\mathrm{SO}(7)$-representation $V_{(m_1,m_2,m_3)}$ with highest weight $m_1\lambda_1+m_2\lambda_2+m_3\lambda_3, \ m_1, m_2 , m_3\geq 0$ we have
\begin{align*}
\rho_\lambda(\cas_{\mathfrak{so}(7)})&= \frac{1}{40}(3m_1^2+8m_2^2+4m_3^2+8m_1m_2+4m_1m_3+8m_2m_3+ 18m_1+32m_2+20m_3).
\end{align*}

Now we compute the eigenvalues of the Casimir operator for the irreducible representations of $\fg_2\subset\mathfrak{so}(7)$. A Cartan subalgebra of $\fg_2$ is given by $\text{Span}\{H_1,H_2\}$. 
Here a pair of simple roots $\beta_1,\beta_2$ is given by
\begin{align*}
    \beta_1&=\begin{bmatrix}
    i\\-2i
    \end{bmatrix},\ \ \beta_2=\begin{bmatrix}
    0\\i
    \end{bmatrix}
\end{align*} 
and the Cartan matrix $\tilde{C}$ for $\lieg2$ is given by
\begin{align*}
    \tilde{C}=\begin{bmatrix}
    2&-1\\
    -3&2
    \end{bmatrix}
\end{align*}.
Let $\mu_1,\mu_2$ be the fundamental weights in decreasing order then their duals with respect to $B$ are $-iH_1-2iH_2,iH_2$ respectively and one can compute \begin{align*}
\begin{bmatrix}
B(\mu_1,\mu_1)&B(\mu_1,\mu_2)\\B(\mu_2,\mu_1)&B(\mu_2,\mu_2)
\end{bmatrix}&= \begin{bmatrix}
1/15&1/10\\ 1/10&1/5
\end{bmatrix}.
\end{align*}
 Again half the sum of the positive roots is given by $\mu_1+\mu_2$. Using these values in the Freudenthal formula for an irreducible $\G2$-representation $V_{(p,q)}$ with highest weight $p\mu_1+q\mu_2$ we have\begin{align*}
\rho_{(p,q)}(\cas_{\fg_2})&=\frac{1}{15}(p^2+3q^2+3pq+5p+9q).
\end{align*}

\noindent
\textit{Case 1: $E= \fg_2$}

\noindent
The adjoint representation $(\lieg2)_\C\cong V_{(0,1)}$. From above
\begin{align*}
 \rho_{(0,1)}(\cas_{\fg_2})& =\frac{4}{5}.
\end{align*}  
Substituting the above found values into \eqref{caseq} we get that $V_{(m_1,m_2,m_3)}$ can be an infinitesimal deformation space for the canonical connection if \begin{align*}
\frac{1}{40}(3m_1^2+8m_2^2+4m_3^2+8m_1m_2+4m_1m_3+8m_2m_3+ 18m_1+32m_2+20m_3)&=\frac{4}{5}.
\end{align*}But since there are no positive integral solutions of this equation there are no deformations of the canonical connection on $\mathrm{Spin}(7)/\G2$.

\subsubsection{$\mathrm{SO(5)/SO(3)}$}
 The complex irreducible $\mathrm{SO}(5)$-representations are characterized by highest weights $(m_1,m_2)\in\mathbb{Z}_{\geq 0}$. The complex irreducible representations of $\mathrm{SO}(3)$ are given by $S^k\C^2$ which is a $\binom{2+k-1}{k} =k+1$ dimensional space. The $3$-dimensional adjoint representation $\mathfrak{so}(3)_\C$ and the $7$-dimensional representation $\m_\C$ are irreducible $\mathrm{SO}(3)$-representations therefore \begin{align*}
\m_\C &\cong S^6\C^2,\\
\mathfrak{so}(3)_\C &\cong S^2\C^2.
\end{align*}

A Cartan subalgebra of $\mathfrak{so}(5)$ is given by $\text{Span}\{H_1,H_2\}$ where $H_1=E_{12},H_2=E_{34}$ where $E_{ij}$ is the $5\times 5$ skew-symmetric matrix with $1$ at the $(i,j)$th position and $0$ elsewhere. With respect to the Killing form $B$ on $\mathfrak{so}(5)$, $H_1$ is orthogonal to $H_2$  with $B(H_i,H_i)=-6$ for $i=1,2$. Let $\lambda_1,\lambda_2$ be the fundamental weights whose duals are $i(H_1-H_2),2iH_2$ respectively then half the sum of positive roots is given by $\lambda_1+\lambda_2$.  Doing similar computations as above we get \begin{align*}
    \begin{bmatrix}
    B(\lambda_1,\lambda_1)&B(\lambda_1,\lambda_2)\\B(\lambda_2,\lambda_1)&B(\lambda_2,\lambda_2)
    \end{bmatrix}= \begin{bmatrix}
    1/6&1/12\\1/12&1/12
    \end{bmatrix}.
\end{align*} Using \eqref{fruedenthal} for the eigenvalues of the Casimir operator for irreducible representation $V_{(m_1,m_2)}$ of $\mathrm{SO}(5)$ with highest weight $m_1\lambda_1+m_2\lambda_2$ for $m_1,m_2\geq 0$   we get,\begin{align*}
\rho_{(m_1,m_2)}(\cas_{\mathfrak{so}(5)})&= \frac{1}{12}(2m_1^2+m_2^2+2m_1m_2+6m_1+4m_2).
\end{align*}

Under the embedding of $\mathfrak{so}(3)$ in $\mathfrak{so}(5)$ the  Cartan subalgebra of $\mathfrak{so}(3)$ is given by $\text{Span}\{2H_1+H_2\}$. Here the Cartan subalgebra is $1$-dimensional and the fundamental weight $\mu_1$ is dual to $4iH_1+2iH_2$. Using $B(H_i,H_i)=-6$ one can compute that $B(4H_1+2H_2,4H_1+2H_2)=-120$ the eigenvalue of the Casimir operator on the irreducible representation $S^q\C^2$ of $\mathfrak{so}(3)$ is given by \begin{align*}
\rho_q(\cas_{\mathfrak{so}(3)}) =\frac{1}{120}(q^2+2q).
\end{align*}

\noindent
\textit{Case 1: $E=\mathfrak{so}(3)$}

\noindent
The adjoint representation of $\mathfrak{so}(3)_\C$ is an irreducible $\mathfrak{so}(3)$ representation with highest weight $2$. Thus \begin{align*}
\rho_{E}(\cas_{\mathfrak{so}(3)})&=\rho_{2}(\cas_{\mathfrak{so}
(3)})=\frac{1}{15}.
\end{align*}
We need to find irreducible representations $V_{(m_1,m_2)}$ of $\mathfrak{so}(5)$ that satisfy \eqref{caseq} which requires \begin{align*}
\frac{1}{12}(2m_1^2+m_2^2+2m_1m_2+6m_1+4m_2)&=\frac{1}{15}.
\end{align*}  But since there are no integral solutions for the equation the deformation space is trivial in this case.

\medskip

\noindent
\textit{Case 2: $E=\mathfrak{\fg_2}$}

\noindent
The adjoint representation of $(\fg_2)_\C$ splits as an $\mathfrak{so}(3)$ representation  into $S^2\C^2\oplus S^{10}\C^2$. The first component in the splitting has already been studied in case 1 and hence has no contribution to the deformation space.
For the second component \begin{align*}
\rho_{10}(\cas_{\mathfrak{so}(3)})=1.
\end{align*} Thus we need to find $\mathfrak{so}(5)$ representations $V_{(m_1,m_2)}$ such that \begin{align*}
\frac{1}{12}(2m_1^2+m_2^2+2m_1m_2+6m_1+4m_2)&=1,
\end{align*} which has one integral solution namely $m_1=0,m_2=2$. Thus $V_{(0,2)}\cong \mathfrak{so}(5)_\C$ is the only SO(5)-representation for which $\cas_\fg$ has eigenvalue $1$. As $\mathfrak{so}(3)$ representations \begin{align*}
    V_{(0,2)}\cong S^2\C^2\oplus S^6\C^2,\\
    \m_\C^*\otimes S^{10}\C^2\cong \bigoplus_{k=2}^8 S^{2k}\C^2.
\end{align*}Thus $V_{(0,2)}$ and $\m_\C^*\otimes S^{10}\C^2$ have $1$ common irreducible $\mathfrak{so}(3)$ representation namely $S^6\C^2$. Thus $V_{(0,2)}$ occurs in $\textup{Ind}_{H}^G(\m_\C^*\otimes S^{10}\C^2)$ with multiplicity $1$. Therefore in this case $(\ker((D^{-1/3,can})^2-\frac{49}{9}\id)\cap\Gamma(\m^*\otimes\lieg2))_\C\cong V_{(0,2)}$. 

\subsubsection{ $\frac{\mathrm{Sp}(2)\times\mathrm{Sp}(1)}{\mathrm{Sp}(1)\times \mathrm{Sp}(1)}$}\label{Sp2Sp1}

The Lie algebra $\mathfrak{sp}(2)\oplus\mathfrak{sp}(1)$ decomposes as \begin{align*}
\mathfrak{sp}(2)\oplus\mathfrak{sp}(1)&= \mathfrak{sp}(1)_u\oplus \mathfrak{sp}(1)_d \oplus \m
\end{align*} and the embeddings $\mathfrak{sp}(1)_u,\mathfrak{sp}(1)_d$ are given by \begin{align*}
\mathfrak{sp}(1)_u&=\Big\{ \Big(\begin{pmatrix}
a&0\\0&0
\end{pmatrix},0\Big) : a\in \mathfrak{sp}(1)\Big\}, \ \ \ \mathfrak{sp}(1)_d =\Big\{ \Big(\begin{pmatrix}
0&0\\0&a
\end{pmatrix},a\Big) : a\in \mathfrak{sp}(1)\Big\}
\end{align*} where we follow the notations used in \cite{deformg2}. Let $H_1=(E_1,0),H_2=(E_2,0)$ and $H_3=(0,E_3)$ then a Cartan subalgebra of $\mathfrak{sp}(2)\oplus\mathfrak{sp}(1)$ is given by $\text{Span}\{H_1,H_2,H_3\}$ where \begin{align*}
E_1&= \begin{pmatrix}
i&0&0&0\\0&0&0&0\\0&0&-i&0\\0&0&0&0
\end{pmatrix}, E_2=\begin{pmatrix}
0&0&0&0\\0&i&0&0\\0&0&0&0\\0&0&0&-i
\end{pmatrix}, E_3=\begin{pmatrix}
i&0\\0&-i
\end{pmatrix}.
\end{align*} If $B$ denote the Killing form of $\mathrm{Sp}(2)\times\mathrm{Sp}(1)$ we can compute that $H_i$s are orthogonal with respect to $B$ and $B(H_i,H_i)=-12$ for $i=1,2$ and $B(H_3,H_3)=-8$. The  fundamental weights $\lambda_1,\lambda_2,\lambda_3$ are dual to $i(H_1-H_2),iH_1,iH_3$ respectively and half the sum of positive roots is given by $\lambda_1+\lambda_2+\lambda_3$. By identical calculations as in other cases we get 
\begin{align*}
\begin{bmatrix}
B(\lambda_1,\lambda_1)&B(\lambda_1,\lambda_2)&B(\lambda_1,\lambda_3)\\
B(\lambda_2,\lambda_1)&B(\lambda_2,\lambda_2)&B(\lambda_2,\lambda_3)\\
B(\lambda_3,\lambda_1)&B(\lambda_3,\lambda_2)&B(\lambda_3,\lambda_3)
\end{bmatrix} &= \begin{bmatrix}
1/12&1/12&0\\1/12&1/6&0\\0&0&1/8
\end{bmatrix}.
\end{align*} Applying the Freudenthal formula \eqref{fruedenthal} we get that the Casimir operator of $\mathfrak{sp}(2)\oplus\mathfrak{sp}(1)$ acts on the irreducible representations $V_{(m_1,m_2,l)}$ with highest weight $m_1\lambda_1+m_2\lambda_2+l\lambda_3, m_1,m_2,l\geq 0$ with the eigenvalue \begin{align*}
\rho_{(m_1,m_2,l)}(\cas_{\mathfrak{sp}(2)\oplus\mathfrak{sp}(1)})&= \frac{1}{12}(m_1^2+2m_2^2+2m_1m_2+4m_1+6m_2)+\frac{1}{8}(l^2+2l).
\end{align*}

Under the embedding given above a Cartan subalgebra of $\mathfrak{sp}(1)_u,\mathfrak{sp}(1)_d$ is given by $\text{Span}\{H_1\}$ and $\text{Span}\{(E_2,E_3)\}$ respectively. Let $P,Q$ be the standard $2$-dimensional representation of $\mathfrak{sp}(1)_u,\mathfrak{sp}(1)_d$ respectively. Then the unique $(n+1)-$dimensional irreducible $\mathfrak{sp}(1)_u$ (respectively $\mathfrak{sp}(1)_d$) representation is given by $S^nP$ (respectively $S^nQ$). 
From previous calculations we have $B(H_1,H_1)=-12$ thus the eigenvalue of $\cas_{\mathfrak{sp}(1)_u}$ on  $S^nP$ is given by \begin{align*}
\rho_{n}(\cas_{\mathfrak{sp}(1)_u})&=\frac{1}{12}(n^2+2n).
\end{align*}
Similarly with the help of previous work one can calculate $B((E_2,E_3),(E_2,E_3))=-20$. Thus $\cas_{\mathfrak{sp}(1)_d}$ acts on $S^pQ$ as the scalar multiple of \begin{align*}
\rho_{n}(\cas_{\mathfrak{sp}(1)_d})&=\frac{1}{20}(n^2+2n).
\end{align*} 

The adjoint representation $\mathfrak{sp}(1)$ is an irreducible $3$-dimensional $\mathfrak{sp}(1)$ representation and hence we have the following decompositions into $\mathrm{Sp}(1)_u\times \mathrm{Sp}(1)_d$ representations \begin{align*}
(\mathfrak{sp}(1)_u)_\C &\cong S^2P, \ \ \ (\mathfrak{sp}(1)_d)_\C \cong S^2Q, \ \ \ \m_\C\cong S^2Q\oplus PQ
\end{align*} where $PQ$ denotes the tensor product of $P$ and $Q$ and we omitted the tensor product sign for clarity and will continue to do so. 

\medskip
\noindent
\textit{Case 1: $E=\mathfrak{sp}(1)_u\oplus \mathfrak{sp}(1)_d$}

\noindent
We need to find the irreducible $\mathfrak{sp}(2)\oplus\mathfrak{sp}(1)$ representations $V_{(m_1,m_2,l)}$ that satisfy \eqref{caseq} for each irreducible component of $\h_\C$ that is $(\mathfrak{sp}(1)_u)_\C$ and $(\mathfrak{sp}(1)_d)_\C$ . For $\mathfrak{sp}(1)_u$ this equation takes the form \begin{align*}
\frac{1}{12}(m_1^2+2m_2^2+2m_1m_2+4m_1+6m_2)+\frac{1}{8}(l^2+2l)&= \frac{8}{12}.
\end{align*} The integral solution $(m_1,m_2,l)$ for this equation is $(0,1,0)$. Thus the only irreducible  $\mathfrak{sp}(2)\oplus\mathfrak{sp}(1)$ representations for which $\cas_\fg$ has eigenvalue $\frac{2}{3}$ is $ V_{(0,1,0)}$. As $\mathfrak{sp}(1)_u\oplus\mathfrak{sp}(1)_d$-representations we have the following decomposition \begin{align*}
V_{(0,1,0)} &\cong  PQ\oplus \C,\\
(\mathfrak{sp}(1)_u\otimes \m)_\C &\cong  S^2P S^2Q\oplus S^3PQ\oplus PQ.
\end{align*} The irreducible $\mathrm{Sp}(1)\times\mathrm{Sp}(1)$ representation in $(\mathfrak{sp}(1)_u\otimes \m)_\C$ common with $V_{(0,1,0)}$  is $PQ$ with multiplicity $1$. Thus $ V_{(0,1,0)}$ occurs in $\textup{Ind}_{H}^G(\m^*\otimes{\mathfrak{sp}(1)_u})_\C$ with multiplicity $1$. Therefore the solutions to \eqref{caseq} in $\Gamma(\mathfrak{{\m^*}\otimes sp(1)}_u )_\C$ is the $5$-dimensional complex $\mathrm{Sp}(2)\times\mathrm{Sp}(1)$ representation $V_{(0,1,0)}$.

For the next irreducible $\h_\C$ component $(\mathfrak{sp}(1)_d)_\C$ \eqref{caseq} for $V_{(m_1,m_2,l)}$ becomes \begin{align*}
\frac{1}{12}(m_1^2+2m_2^2+2m_1m_2+4m_1+6m_2)+\frac{1}{8}(l^2+2l)&= \frac{8}{20},
\end{align*} which has no integral solutions and thus it has no contribution to the deformation space. 

Thus from Proposition \ref{casprop} we conclude that $(\ker((D^{-1/3.can})^2-\frac{49}{9}\id)\cap\Gamma(\m^*\otimes\mathfrak{sp}(1)_u\oplus\mathfrak{sp}(1)_d)_\C \cong  (V_{(0,1,0)})$ when the structure group is $\mathrm{Sp}(1)_{u}\times\mathrm{Sp}(1)_{d}$.

\medskip
\noindent
\textit{Case 2: $E=(\fg_2)_\C$}

\noindent
The adjoint representation of $\fg_2$ decomposes into irreducible $\mathfrak{sp}(1)_u\oplus \mathfrak{sp}(1)_d$ as follows: \begin{align*}
(\fg_2)_\C &= S^2P\oplus S^2Q \oplus PS^3Q.
\end{align*} We have already seen the contribution of the first two irreducible components in the summation. For the third component \begin{align*}
\rho_{1,3}(\cas_{\mathfrak{sp}(1)_u\oplus \mathfrak{sp}(1)_d})&= 1,
\end{align*} so here we need to find the $\mathfrak{sp}(2)\oplus\mathfrak{sp}(1)$ representations $V_{(m_1,m_2,l)}$ such that \begin{align*}
\frac{1}{12}(m_1^2+2m_2^2+2m_1m_2+4m_1+6m_2)+\frac{1}{8}(l^2+2l)&= 1. 
\end{align*} The $\mathfrak{sp}(2)\oplus\mathfrak{sp}(1)$-representations that satisfy \eqref{caseq} are $V_{(2,0,0)}$ and  $V_{(0,0,2)}$, which decompose into $\mathfrak{sp}(1)_u\oplus \mathfrak{sp}(1)_d$ representations as \begin{align*}
V_{(2,0,0)} &\cong \mathfrak{sp}(2)_\C\cong S^2P\oplus S^2Q\oplus PQ, \ \ \ 
V_{(0,0,2)}\cong (\mathfrak{sp}(1)_d)_\C\cong S^2Q.
\end{align*} Moreover \begin{align*}
PS^3Q \otimes \m_\C^* &\cong S^2PS^4Q\oplus S^2PS^2Q\oplus P(S^5Q\oplus S^3Q\oplus Q)\oplus S^4Q\oplus S^2Q. 
\end{align*} Thus $V_{(2,0,0)}$ and $PS^3Q\otimes\m_\C^*$ have two common irreducible representations $PQ,S^2Q$ and $V_{(0,0,2)}$ and $PS^3Q\otimes\m_\C^*$ have one common irreducible representation $S^2Q$. So by Frobenius reciprocity $V_{(2,0,0)}$ and $V_{(0,0,2)}$ lie in $\textup{Ind}_{H}^G(\m_\C^*\otimes{PS^3Q})$ with multiplicity $2,1$ respectively. Thus the solution of \eqref{caseq} in $\Gamma( \m^*\otimes \fg_2)_\C$ is  the $28$ dimensional  $\mathrm{Sp}(2)\times\mathrm{Sp}(1)$ complex representation $2V_{(2,0,0)}\oplus V_{(0,1,0)}\oplus V_{(0,0,2)}$. So again by Proposition \ref{casprop} we conclude that $\ker((D^{-1/3.can})^2-\frac{49}{9}\id)\cap\Gamma(\m^*\otimes \lieg2)_\C\cong 2V_{(2,0,0)}\oplus V_{(0,1,0)}\oplus V_{(0,0,2)}$ when the structure group is $\G2$.

\subsubsection{ $\frac{\mathrm{SU}(3)\times\mathrm{SU}(2)}{\mathrm{SU}(2)\times \mathrm{U}(1)}$}

The embeddings of $\mathfrak{su}(2)$ and $\mathfrak{u}(1)$ in $\mathfrak{su}(3)\times\mathfrak{su}(2)$ which we denote by $\mathfrak{su}(2)_d$ and $\mathfrak{u}(1)$  following \cite{deformg2} in $\mathfrak{su}(3)\oplus\mathfrak{su}(2)$ are given by  \begin{align*}
\mathfrak{su}(2)_d&= \Big\{ \Big(\begin{pmatrix}
a&0\\0&0
\end{pmatrix},a\Big): a\in \mathfrak{su}(2)\Big\}, \ \ \ \mathfrak{u}(1)=\text{span}\Big\{\Big( \begin{pmatrix}
i&0&0\\0&i&0\\0&0&-2i
\end{pmatrix},0\Big)\Big\}.
\end{align*}A Cartan subalgebra of $\mathfrak{su}(3)\oplus\mathfrak{su}(2)$ is given by $\text{span}\{H_1=(E_1,0),H_2=(E_2,0),H_3=(0,E_3)\}$ where \begin{align*}
E_1&= \begin{pmatrix}
0&1&0\\-1&0&0\\0&0&0
\end{pmatrix}, \ \ E_2 =\begin{pmatrix}
i&0&0\\0&i&0\\0&0&-2i
\end{pmatrix}, \ \ E_3= \begin{pmatrix}
0&1\\-1&0
\end{pmatrix}.
\end{align*} We can check that the $H_i$s are orthogonal with respect to the Killing form $B$ on $\mathrm{SU}(3)\times\mathrm{SU}(2)$. As earlier we denote by $\lambda_1,\lambda_2,\lambda_3$ the fundamental weights which are dual to $\frac{i}{2}(H_1-H_2),\frac{i}{2}(H_1+H_2),iH_3$ respectively. By direct computations we get \begin{align*}
\begin{bmatrix}
B(H_1,H_1)&B(H_1,H_2)&B(H_1,H_3)\\
B(H_2,H_1)&B(H_2,H_2)&B(H_2,H_3)\\
B(H_3,H_1)&B(H_3,H_2)&B(H_3,H_3)
\end{bmatrix} &= \begin{bmatrix}
-12&0&0\\0&-36&0\\0&0&-8
\end{bmatrix},
\end{align*} therefore \begin{align*}
\begin{bmatrix}
B(\lambda_1,\lambda_1)&B(\lambda_1,\lambda_2)&B(\lambda_1,\lambda_3)\\
B(\lambda_2,\lambda_1)&B(\lambda_2,\lambda_2)&B(\lambda_2,\lambda_3)\\
B(\lambda_3,\lambda_1)&B(\lambda_3,\lambda_2)&B(\lambda_3,\lambda_3)
\end{bmatrix} &= \begin{bmatrix}
1/9&1/18&0\\1/18&1/9&0\\0&0&1/8
\end{bmatrix}.
\end{align*} Half the sum of the positive roots is $\lambda_1+\lambda_2+\lambda_3$ and thus by Freudenthal formula \eqref{fruedenthal} for a $\mathfrak{su}(3)\oplus\mathfrak{su}(2)$ representation $V_{(m_1,m_2,l)}$ with highest weight $m_1\lambda_1+m_2\lambda_2+l\lambda_3$ where $m_1,m_2,l\geq 0$ \begin{align*}
\rho_{m_1,m_2,l}(\cas_{\mathfrak{su}(3)\oplus\mathfrak{su}(2)})&= \frac{1}{9}(m_1^2+m_2^2+m_1m_2+3m_1+3m_2)+\frac{1}{8}(l^2+2l).
\end{align*}

Using the embeddings of $\mathfrak{su}(2)$ and  $\mathfrak{u}(1)$ given above we see that Cartan subalgebras of  $\mathfrak{su}(2)$ and $\mathfrak{u}(1)$ in $\mathfrak{su}(3)\oplus\mathfrak{su}(2)$ are given by $\text{span}\{(E_1,E_3)\}$ and $\text{span}\{H_2\}$ respectively. By calculations completely analogous to the previous case we then get that if we represent the irreducible $(n+1)$-dimensional $\mathfrak{su}(2)_d$ representations by $S^nW$ where $W$ is the standard $\mathfrak{su}(2)_d$ representation and the $1$-dimensional $\mathfrak{u}(1)$ representation with highest weight $k$ by $F(k)$ we get by the Freudenthal formula \eqref{fruedenthal} \begin{align*}
\rho_n(\cas_{\mathfrak{su}(2)_d})&=\frac{1}{20}(n^2+2n),\\
\rho_k(\cas_{\mathfrak{u}(1)})&=\frac{1}{36}k^2. 
\end{align*} As $\mathfrak{su}(2)_d\oplus \mathfrak{u}(1)$ representations the $7$-dimensional space $\m_\C$ decomposes as \begin{align*}
\m_\C&\cong S^2W\oplus WF(3)\oplus W F(-3),
\end{align*}whereas the  $3$-dimensional adjoint representation of $(\mathfrak{su}(2)_d)_\C$ is irreducible and hence is isomorphic to $S^2W$. 

\medskip
\noindent
\textit{Case 1: $E=\mathfrak{su}(2)_d\oplus \mathfrak{u}(1)$}

\noindent 
The adjoint representation $\mathfrak{su}(2)_d\oplus\mathfrak{u}(1)$ splits as irreducible $\mathfrak{su}(2)_d\oplus \mathfrak{u}(1)$ representations as follows:
\begin{align*}
(\mathfrak{su}(2)_d\oplus\mathfrak{u}(1))_\C&\cong S^2W\oplus \C.
\end{align*} 
Since $U(1)$ is abelian we know by Theorem \ref{abelian} that the component $\mathfrak{u}(1)$ is abelian and thus gives rise to no deformations of the canonical connection. Therefore we only need to check for deformations corresponding to $S^2W$. For that we need to look for representations $V_{(m_1,m_2,l)}$ such that \begin{align*}
\frac{1}{9}(m_1^2+m_2^2+m_1m_2+3m_1+3m_2)+\frac{1}{8}(l^2+2l)&= \frac{8}{20},
\end{align*} which as seen before has no integral solutions.

Hence the canonical connection admits no deformations in this case.

\medskip
\noindent
\textit{Case 2: $E=\fg_2$}

\noindent
The adjoint representation $(\fg_2)_\C$ splits as $\mathfrak{su}(2)_d\oplus \mathfrak{u}(1)$ representation as follows:
\begin{align*}
(\fg_2)_\C&= S^3WF(3)\oplus S^3WF(-3)\oplus S^2W\oplus F(6)\oplus F(-6)\oplus \C.
\end{align*}
We need to follow the same procedure as above for each of the components. For each component we need to find the $\mathfrak{su}(3)\oplus\mathfrak{su}(2)$ representation $V_{(m_1,m_2,l)}$ that satisfies \eqref{caseq}. We have already solved this for $S^2W\oplus\C$ so we just need to compute it for the rest.

\noindent
From above calculations $\rho_{S^3WF(3)}(\cas_\h)=1$ therefore $V_{(m_1,m_2,l)}$ should satisfy\begin{align*}
\frac{1}{9}(m_1^2+m_2^2+m_1m_2+3m_1+3m_2)+\frac{1}{8}(l^2+2l)&= 1.
\end{align*}The only possible solutions are $V_{(0,0,2)},V_{(1,1,0)}$. As $\mathfrak{su}(2)\otimes \mathfrak{u}(1)$ representations $V_{(0,0,2)}\cong S^2W$ and $V_{(1,1,0)}\cong \mathfrak{su}(3)_\C$.
Further one can compute\begin{align*}
V_{(0,0,2)}&\cong \mathfrak{su}(2)_\C\cong S^2W, \\
V_{(1,1,0)}&\cong \mathfrak{su}(3)_\C\cong S^2W\oplus  WF(3)\oplus W F(-3)\oplus\C,\\
S^3WF(3)\otimes \m_\C^* &\cong (S^5W\oplus S^3W\oplus W)F(3)\oplus (S^4W\oplus S^2W)F(6)\oplus S^4W\oplus S^2W.
\end{align*}
Thus $V_{(0,0,2)}$ and $S^3WF(3)\otimes \m_\C^*$ has one common component $S^2W$ with multiplicity $1$  and $V_{(1,1,0)}$ and $S^3WF(3)\otimes \m_\C^*$ has two common components $S^2W,WF(3)$ both with multiplicity $1$ each.
So by Frobenius reciprocity $\textup{Ind}_{H}^G(\m_\C^*\otimes S^3WF(3))$ contains a copy of $V_{(0,0,2)}\oplus 2V_{(1,1,0)}$.

 The representation $S^3WF(-3)$ is the dual of the representation $S^3WF(3)$ and since $\mathrm{SU}(2)\otimes \mathrm{U}(1)$ representations are isomorphic to their duals the result for this case is same as the above and $\textup{Ind}_{H}^G(\m_\C^*\otimes S^3WF(-3))$ also contains a copy of $V_{(0,0,2)}\oplus 2V_{(1,1,0)}$.

For the $\mathfrak{u}(1)$ representation $F(6)$, $\rho_{6}(\cas_{\mathfrak{u}(1)})=1$. Thus again the only solutions are $V_{(0,0,2)}, V_{(1,1,0)}$ by the previous case. The $\mathfrak{su}(2)\oplus\mathfrak{u}(1)$ representation $F(6)\otimes\m_\C^*$ has the following decompostion
\begin{align*}
     F(6)\otimes \m^*_\C \cong S^2WF(6)\oplus WF(9)\oplus WF(3),
 \end{align*} thus $V_{(0,0,2)}$ is not contained in $\textup{Ind}_{H}^G(\m_\C^*\otimes F(6))$ but $V_{(1,1,0)}$ is with multiplicity 1. Since $F(-6)\cong F(6)^*$ this case is similar to the above case.
 
 \noindent
 Summing up all the parts together we get that  $\ker((D^{-1/3,can})^2- \frac{49}{9}\}\cap\Gamma(\m^*\otimes\lieg2)_\C\cong 2(V_{(0,0,2)}\oplus 3V_{(1,1,0)})$ when the structure group is $\G2$.

\medskip

 Table \ref{tablekerDsquare} lists the $\ker((D^{-1/3,can})^2- \frac{49}{9}\id)\cap\Gamma(\m^*\otimes E)$ when $E=\h$ and $E=\lieg2$ for all the homogeneous spaces listed in Table \ref{tablenormal}. Note that for the remaining two homogeneous spaces $N_{k,l},k\neq l$ and $\mathrm{SU(2)}^3/\mathrm{U(1)}^2$ our methods does not apply when $E=\mathfrak{g_2}$ although since $H$ is abelian for both of them there are no deformations for the $E=\h$ case. The space $V^{(0,1)}$ listed in Table \ref{tablekerDsquare} denotes the unique irreducible $5$-dimensional complex representation of $\mathfrak{sp}(2)$.
 
\begin{table}[ht]
\centering
\begin{tabular}{c|c|c}
Homogeneous space & $\h$& $\fg_2$\\
\hline
Spin(7)/$\G2$& $0$ & $0$\\ 
& &\\
SO(5)/SO(3) &$0$ & $\mathfrak{so}(5)$\\ & &\\
$\frac{\mathrm{Sp}(2)\times \mathrm{Sp}(1)}{\mathrm{Sp}(1)\times \mathrm{Sp}(1)}$ & $ V^{(0,1)}_\R$ & $2\mathfrak{sp}(2)\oplus \mathfrak{sp}(1)\oplus V^{(0,1)}_\R$\\ & &\\
$\frac{\text{SU(3)}\times \text{SU(2)}}{\text{SU(2)}\times \text{ U(1)}}$&$0$& $2\mathfrak{su}(2)\oplus 6\mathfrak{su}(3)$\\ & &\\
N$_{k,l}$ & $0$ & unknown \\& & \\
SU(2)$^3$/U(1)$^2$& $0$ & unknown 
\end{tabular}
\caption{$\ker((D^{-1/3,can})^2-\frac{49}{9}\id)\cap\Gamma(\m^*\otimes E)$}
\label{tablekerDsquare}
\end{table}

\subsection{Eigenspaces of the Dirac operator}\label{eigendirac}                                                       

All the $G$-representations listed in Table \ref{tablekerDsquare} lie in $\ker((D^{-1/3,can})^2-\frac{49}{9}\id)\cap\Gamma(\m^*\otimes E)$ which by \eqref{kerrelation} is equal to $(\ker(D^{-1,can} +2\id)\oplus\ker(D^{-1,can} -\frac 83\id))\cap\cap\Gamma(\m^*\otimes E)$. Since the canonical connection is translation invariant it takes an irreducible $G$-representation to itself. 
Hence the irreducible subspaces found in Table~\ref{tablekerDsquare} lie in either $\ker(D^{-1,can}-\frac{8}{3}\id)$ or $ \ker(D^{-1,can}+2\id)$ where the subspaces in the latter space constitute the infinitesimal deformations of the canonical connection by Theorem \ref{kernelelliptic}. Thus now it remains to identify which of the subspaces in Table~\ref{tablekerDsquare} lies in $\ker(D^{-1,can}+2\id)$ for each of the homogeneous spaces.
for all the homogeneous spaces $G/H$ in Table~\ref{tablenormal} the metric corresponding to the nearly $\G2$ structure $\g2$ is given by $-\frac{3}{40} B$ where $B$ is the Killing form of $G$.  For $1$-forms $X,Y$ the Clifford product between $X$ and $Y\cdot\eta$  is given by\begin{align}\label{cliffprodforms}
X\cdot Y\cdot\eta&= \langle X,Y\rangle\eta - \g2(X,Y,.)\cdot\eta.
\end{align} Thus we have all the ingredients in \eqref{dcan} to calculate the action of the Dirac operator $D^{-1,can}$ on each irreducible subspace in Table~\ref{tablekerDsquare}.

\subsubsection{$\mathrm{SO}(5)/\mathrm{SO}(3)$}

From the previous section we know that there are no deformation of the canonical connection when the structure group is SO(3). For the structure group $\G2$ we calculated that the smooth sections of $G\times_{\rho_{\m^*\otimes\lieg2}}(\m^*\otimes\lieg2)$ in $\ker((D^{-1/3.can})^2-\frac{49}{9}\id)\cong V_{(0,2)}\cong \mathfrak{so}(5)_\C$. If we denote by $E_{ij}$ the skew-symmetric matrix with $1$ at $(i,j)$, $-1$ at $(j,i)$ and $0$ elsewhere and define \begin{align*}
  e_1&:=  \frac{2}{3}(E_{12}-2E_{34}),\ \ \ \ \ 
  e_2:=\frac{2}{3}(\sqrt{2}E_{45}-\frac{\sqrt 3}{\sqrt 2}(E_{23}-E_{14})), \\ e_3&:=\frac{2\sqrt{5}}{3}E_{25},\hspace{1.8cm}
  e_4:=\frac{2}{3}(\sqrt 2 E_{35}-\frac{\sqrt 3}{\sqrt 2}(E_{13}+E_{24})), \\ e_5&:=\frac{\sqrt{10}}{3}(E_{24}-E_{13}),\ \ \ e_6:=-\frac{\sqrt{10}}{3}(E_{23}+E_{14}), \ \ \ e_7:=\frac{2\sqrt{5}}{3}E_{15},
\end{align*} then $\{e_i,i=1\dots 7\}$ defines a basis of $\m^*$ which is orthonormal with respect to the metric $-\frac{3}{40}B$. With respect to this basis the nearly $G_2$ structure $\g2$ is given by \begin{align*}
    \g2&=e_{124}+e_{137}+e_{156}+e_{235}+e_{267}+e_{346}+e_{457}.
\end{align*}
We have seen that for $\mathrm{SO}(5)/\mathrm{SO}(3)$ the canonical connection has no deformation as an $\mathrm{SO}(3)$ connection.  Now we need to check whether the SO(5)-representation $V_{(0,2)}$ lies in the $\ker(D^{-1,can}-\frac{8}{3}\id)\cap\Gamma(\m^*\otimes\lieg2)_\C$ or $\ker(D^{-1,can}+2\id)\cap\Gamma(\m^*\otimes\lieg2)_\C$. 
 As seen before the common irreducible $\mathfrak{so}(3)$ representation in $V_{(0,2)}|_{\mathfrak{so}(3)}$ and $(\m^*\otimes \lieg2)_\C$ is $S^6\C^2 \cong \m^*_\C$. We denote the $1$-dimensional space $\textup{Hom}(V_{(0,2)},(\m^*\otimes \lieg2)_\C)=\textup{Span}(\alpha).$ Let $\mu_i,i=1\ldots11$ be a basis of the $11$-dimensional subspace of $(\lieg2)_\C$ isomorphic to the $\mathfrak{so}(3)$ representation $S^{10}\C^2$. Then the subspace of $\m_\C^*\otimes S^{10}\C^2\subset (\m^*\otimes\lieg2)_\C$ isomorphic to $S^6\C^2$ is given by $\textup{Span}\{v_i,i=1\dots 7\}$ where \begin{align*}
     v_1=&-\frac{e_2}{14}\otimes(5(\mu_1-\mu_7)+3\sqrt{15}\mu_9)+e_3\otimes(\mu_5+\mu_{11}) -\frac{e_4}{14}\otimes(5\mu_2+3\sqrt{15}(\mu_3+\mu_4))\\&+e_5\otimes(\mu_3-\mu_4)+e_6\otimes\mu_9+e_7\otimes(\mu_6-\mu_{10}),\\
     v_2=&e_1\otimes\mu_9+e_2\otimes(-2\mu_5+\mu_4)-\frac{e_3}{28}\otimes(47\mu_1+37\mu_7+3\sqrt{5}\mu_9)-e_4\otimes(\mu_6+2\mu_{10})\\&-\frac{e_5}{14}\otimes\mu_8+\frac{e_7}{28}\otimes(-37\mu_2+3\sqrt{15}(\mu_3+\mu_4)),\\
     v_3=&-\frac{e_1}{2}\otimes(\mu_3-\mu_4)+\frac{e_2}{2}\otimes(2\mu_6+\mu_{10})+\frac{e_3}{56}\otimes(47\mu_2+3\sqrt{5}(\mu_3+\mu_4))-\frac{e_4}{2}\otimes(\mu_5-2\mu_{11})\\&-\frac{e_6}{28}
\otimes\mu_8 +\frac{e_7}{56}(-37\mu_1+6\sqrt{15}\mu_9),\\
v_4=&-\frac{e_1}{28}\otimes(5\mu_2+3\sqrt{15}(\mu_3+\mu_4))+\frac{5e_2}{28}\otimes\mu_8-\frac{e_3}{56}\otimes(3\sqrt{15}\mu_2+41\mu_3+13\mu_4)\\&-\frac{e_5}{2}\otimes(\mu_5-2\mu_{11})+\frac{e_6}{2}\otimes(\mu_6+2\mu_{10})+\frac{e_7}{56}\otimes(3\sqrt{15}(\mu_1-\mu_7)+41\mu_9),\\
v_5=&e_1\otimes(\mu_5+\mu_{11})-\frac{e_2}{28}\otimes(3\sqrt{15}(\mu_1-\mu_7)+13\mu_9)+\frac{e_4}{28}\otimes(3\sqrt{15}\mu_2+41\mu_3+13\mu_4)\\&+\frac{e_5}{28}\otimes(47\mu_2+3\sqrt{15}(\mu_3+\mu_4))+\frac{e_6}{28}\otimes(47\mu_1+37\mu_7+3\sqrt{15}\mu_9)+\frac{2e_7}{28}\otimes\mu_8,\\
v_6=&e_1\otimes(-\mu_6+\mu_{10})+\frac{e_2}{28}\otimes(3\sqrt{15}\mu_2+13\mu_3+41\mu_4)+\frac{2e_3}{7}\otimes\mu_8+\frac{e_4}{28}\otimes(3\sqrt{15}(\mu_1-\mu_7)+41\mu_9)\\&+\frac{e_5}{28}\otimes(37\mu_1+47\mu_7-3\sqrt{15}\mu_9)+\frac{e_6}{28}\otimes(-37\mu_2+3\sqrt{15}(\mu_3+\mu_4)),\\
v_7=&\frac{e_1}{14}\otimes(5(\mu_1-\mu_7)+3\sqrt{15}\mu_9)-\frac{e_3}{28}\otimes(3\sqrt{15}(\mu_1-\mu_7)+13\mu_9)+\frac{5e_4}{14}\otimes\mu_8-2e_5\otimes(\mu_6+\mu_{10})\\&+e_6\otimes(-2\mu_5+\mu_{11})-\frac{e_7}{28}\otimes(3\sqrt{15}\mu_2+13\mu_3+41\mu_4).
\end{align*}The subspace of $V_{(0,2)}$ isomorphic to $S^6\C^2$ is $\textup{Span}_\C\{e_i,i=1\ldots7\}$ and the SO(3) equivariant homomorphism  $\alpha$ between $V_{(0,2)}$ and $(\m^*\otimes\lieg2)_\C$ is given by \begin{align*}
    \alpha(e_1)&=v_1, \ \ \ \alpha(e_2)=v_7, \  \  \  \alpha(e_3)=-v_5, \\
    \alpha(e_4)&=-2v_4, \  \ \ \alpha(e_5)=2v_3, \ \ \ \alpha(e_6)=-v_2, \ \ \ \alpha(e_7)=v_6.
\end{align*} Any section of the bundle associated to $\m^*\otimes \lieg2$ in $\ker((D^{-1/3.can})^2-\frac{49}{9}\id)$ can be represented by $(\alpha,v)$ for some $v\in V_{(0,2)}|_{S^6\C^2}\cong \m^*_\C$. The action of the canonical connection on such a section is then given by $\del^{-1,can}_X(\alpha,v)(eH)=-\alpha([X,v])$ where the Lie bracket is in $\mathfrak{so}(5)$. We can now calculate the action of the Dirac operator, $D^{-1,can}$ on $(\alpha,e_1)\cdot\eta$ at the point $eH$ as follows. We omit the $\cdot\eta$ from the computations to reduce notational clutter and will continue to do so in every case.
\begin{align*}
    D^{-1,can}(\alpha,e_1)(eH)&= \sum_{i=1}^7 e_i\cdot\del^{-1,can}_{e_i}(\alpha,e_1)(eH)\\
    &= \frac{-2}{3}(e_2\cdot\alpha(e_4)+e_3\cdot\alpha(e_7)+e_4\cdot\alpha(-e_2)+e_5\cdot\alpha(e_6)+e_6\cdot\alpha(-e_5)+e_7\cdot\alpha(-e_3))\\
    &=\frac{2}{3}(2e_2\cdot v_4-e_3\cdot v_6+e_4\cdot v_7 +e_5\cdot v_2+2e_6\cdot v_3-e_7\cdot v_5)\\
    &=\frac{2}{3}(-3v_1)\cdot\eta = -2\alpha(e_1).
    \end{align*} Thus by the translation invariance of the canonical connection $ V_{(0,2)}\subseteq\ker(D^{-1,can}+2\id)\cap\Gamma(\m^*\otimes\lieg2)_\C$. 

\subsubsection{$\frac{\mathrm{Sp}(2)\times\mathrm{Sp}(1)}{\mathrm{Sp}(1)\times \mathrm{Sp}(1)}$}

From the previous section we know that for $E=\mathfrak{sp}(1)\oplus\mathfrak{sp}(1)$ the  $\ker((D^{-1/3.can})^2-\frac{49}{9}\id)\cap\Gamma(\m^*\otimes E)_\C\cong  V_{(0,1,0)}$. Let  $\{e_i,i=1\dots 7\}$ be an  orthonormal basis of $\m^*$ with respect to the metric $-\frac{3}{40} B$ given by\begin{align*}
e_1 &:= \frac{1}{3}\left(\begin{pmatrix}
0&0\\0&2i
\end{pmatrix},-3i\right), \ \ \ e_2:= \frac{1}{3}\left(\begin{pmatrix}
0&0\\0&2j
\end{pmatrix},-3j\right), \ \ \  e_3:= \frac{1}{3}\left(\begin{pmatrix}
0&0\\0&2k
\end{pmatrix},-3k\right),\\
e_4&:= \frac{\sqrt{5}}{3}\left(\begin{pmatrix}
0&1\\-1&0
\end{pmatrix},0\right), \   e_5:= \frac{\sqrt{5}}{3}\left(\begin{pmatrix}
0&i\\i&0
\end{pmatrix},0\right), \  e_6:= \frac{\sqrt{5}}{3}\left(\begin{pmatrix}
0&j\\j&0
\end{pmatrix},0\right), \   e_7:=\frac{\sqrt{5}}{3} \left(\begin{pmatrix}
0&k\\k&0
\end{pmatrix},0\right).
\end{align*}
With respect to this basis the nearly $\G2$ form is given by\begin{align*}
\g2&=e_{123}-e_{145}-e_{167}-e_{246}+e_{257}-e_{347}-e_{356},
\end{align*}

From Table \ref{tablekerDsquare} we know that as an $\mathrm{Sp}(1)\times\mathrm{Sp}(1)$ connection the deformation space of the canonical connection is an irreducible subrepresentation of  $V_{(0,1,0)}$ and is thus trivial or $(V_{(0,1,0)})_\R$ . We need to check whether this space lies in the $-2$ eigenspace of $D^{-1,A}$

\medskip
\noindent
The $\mathrm{Sp}(2)\times\mathrm{Sp}(1)$-representation $V_{(0,1,0)}$ is 5 dimensional. We need to find the space Hom$(V_{(0,1,0)},(\m^*\otimes (\mathfrak{sp}(1)_u\oplus\mathfrak{sp}(1)_d))_\C)_{\mathrm{Sp}(1)\times\mathrm{Sp}(1)}$. The common irreducible $\mathrm{Sp}(1)\times\mathrm{Sp}(1)$ representations in $V_{(0,1,0)}$ and $(\m^*\otimes\mathfrak{sp}(1)_u)_\C$ is $PQ$. Let $S^2P = Span\{I,J,K\}$ then the subspace of $(\m^*\otimes \mathfrak{sp}(1)_u)_\C$ isomorphic to the space $PQ$ is given by $\textup{Span}_\C\{v_1,v_2,v_3,v_4\}$ where 
\begin{align*}
v_1&=e_5\otimes I+e_6\otimes J+e_7\otimes K, \ \ \ v_2=-e_4\otimes I+e_7\otimes J-e_6\otimes K, \\ v_3&=-e_7\otimes I-e_4\otimes J+e_5\otimes K, \ \ v_4=e_6\otimes I-e_5\otimes J-e_4\otimes K.
\end{align*} Let the subspace of $V_{(0,1,0)}$ isomorphic to $PQ$ be given by Span$\{w_1,w_2,w_3,w_4\}$ and the homomorphism space $\textup{Hom}(V_{(0,1,0)},(\m^*\otimes \mathfrak{sp}(1)_u)_\C) = \textup{Span}(\beta)$ where $\beta$ is defined by\begin{align*}
    w_1&\mapsto v_3+iv_4, \ \ \ w_2\mapsto v_1-iv_2, \\ w_3&\mapsto v_1+iv_2, \ \ \ w_4\mapsto v_3-iv_4.
\end{align*} 
Using this isomprhism one can compute that the only non-trivial $\mathfrak{gl}(V_{(0,1,0)}|_{PQ})$ elements with respect to the basis $\{w_1,w_2,w_3,w_4\}$ are \begin{align*}
   \tau_* (e_1)&= \frac{2}{3}\begin{bmatrix}
    i&0&0&0\\0&-i&0&0\\0&0&i&0\\0&0&0&-i
    \end{bmatrix}, \ \ \ \tau_*(e_2)= \frac{2}{3}\begin{bmatrix}
    0&-1&0&0\\1&0&0&0\\0&0&0&1\\0&0&-1&0
    \end{bmatrix}, \ \ \ \tau_*(e_3)= \frac{2}{3}\begin{bmatrix}
    0&i&0&0\\i&0&0&0\\0&0&0&-i\\0&0&-i&0
    \end{bmatrix}.
\end{align*}Also by the definition of the canonical connection, $\del^{-1,can}_X (\beta,w) (eH)= -\beta(\tau_*(X)w)$. Thus we can calculate \begin{align*}
    (D^{-1,can}(\beta,w_1))(eH)&= \sum_{i=1}^7 e_i\cdot\del^{-1,can}_{e_i}(\beta,w_1)(eH)=-\sum_{i=1}^7e_i\cdot\beta((\tau_*(e_i)w_1)|_{PQ})\\ &=-(e_1\cdot \beta(\frac 23 iw_1)+e_2\cdot \beta(\frac 23 w_2)+e_3\cdot\beta(\frac 23 iw_2))\\
    &= -\frac 23 (ie_1\cdot (v_3+iv_4)+e_2\cdot (v_1-iv_2)+ie_3\cdot(v_1-iv_2))\\
    &=-\frac{2}{3}(3(v_3+iv_4))=-2\beta(w_1).
\end{align*}  Thus we have shown that $V_{(0,1,0)}$ lies in the $\ker(D^{-1,can}+2\id)$.

For $E=\lieg2$ the subspace of $\Gamma(\m^*\otimes\lieg2)$ in $\ker((D^{-1/3.can})^2-\frac{49}{9}\id)$ is isomorphic to the $\mathrm{Sp}(1)\times\mathrm{Sp}(1)$ representation $2V_{(2,0,0)}\oplus V_{(0,1,0)}\oplus V_{(0,0,2)}$. We have already dealt with the space $V_{(0,1,0)}$. The remaining spaces are $2V_{(2,0,0)}\cong 2\mathfrak{sp}(2)$ and $V_{(0,0,2)}\cong \mathfrak{sp}(1)$. The two copies of $V_{(2,0,0)}$ arise from $\textup{Hom}(V_{(2,0,0)},\m_\C^*\otimes PS^3Q)_{\mathrm{Sp}(1)\times\mathrm{Sp}(1)}$ and the one copy of $V_{(0,0,2)}$ arises from $\textup{Hom}(V_{(0,0,2)},\m_\C^*\otimes PS^3Q)_{\mathrm{Sp}(1)\times\mathrm{Sp}(1)}$. Thus we have two cases:

\vspace{0.3cm}
\noindent
\underline{\textbf{Case: 1}-$\textup{Hom}(V_{(0,0,2)},\m_\C^*\otimes PS^3Q)_{\mathrm{Sp}(1)\times\mathrm{Sp}(1)}\otimes V_{(0,0,2)}$}

\noindent
Let $\{w_1,w_2,w_3\}$ be the standard basis of $V_{(0,0,2)}\cong\mathfrak{sp}(1)_\C$ then the non-trivial actions of $\m$ on $\mathfrak{sp}(1)_\C$  are given by 
\begin{align*}
    [e_1,.]&=\begin{bmatrix}
    0&0&0\\0&0&-2\\0&2&0
    \end{bmatrix}, \ \ \ [e_2,.]=\begin{bmatrix}
    0&0&2\\0&0&0\\-2&0&0
    \end{bmatrix}, \ \ \ [e_3,.]=\begin{bmatrix}
    0&2&0\\2&0&0\\0&0&0
    \end{bmatrix}.
\end{align*} 
Let $\{\mu_i,i=1\dots 8\}$ be a basis of the $\textup{Sp}(1)_u\times \textup{Sp(1)}_d$ subrepresentation of $(\mathfrak{g}_2)_\C$ isomorphic to $PS^3Q$. The $1$-dimensional space Hom$(V_{(0,0,2)},(\m^*\otimes \mathfrak{g}_2)_\C) = \textup{Span}\{\phi\}$ where  $\phi$ maps \begin{align*}
    w_1&\mapsto e_4\otimes (\mu_5-\mu_2)+e_5\otimes(\mu_1+\mu_6)+e_6\otimes(\mu_4-\mu_7)-e_7\otimes(\mu_3+\mu_8),\\
    w_2&\mapsto e_4\otimes(\mu_3-2\mu_8)-e_5\otimes(\mu_4+2\mu_7)+e_6\otimes(\mu_1-2\mu_6)-e_7\otimes(\mu_2+2\mu_5),\\
    w_3&\mapsto -e_4\otimes(2\mu_4+\mu_7)+e_5\otimes(\mu_8-2\mu_3)-e_6\otimes(2\mu_2+\mu_5)+e_7\otimes(\mu_6-2\mu_1).
    \end{align*} 
    The connection $\del^{-1,can}_X(\phi,w)= -\phi([X,w])$ for $w\in \mathfrak{sp}(1)$ where the Lie bracket is in the Lie algebra $\mathfrak{sp}(2)\oplus\mathfrak{sp}(1)$. Thus we can calculate\begin{align*}
        D^{-1,can}(\phi,w_1)(eH)&= \sum_{i=1}^7e_i\cdot\del^{-1,can}_{e_i}(\phi,w_1)(eH)=-\sum_{i=1}^7 e_i\cdot \phi([e_i,w_1])\\
        &=-(e_2\cdot \phi(-2w_3)+e_3\cdot \phi(2w_2)) \\
        &=-2(e_4\otimes (\mu_5-\mu_2)+e_5\otimes(\mu_1+\mu_6)+e_6\otimes(\mu_4-\mu_7)-e_7\otimes(\mu_3+\mu_8))\\& = -2\phi(w_1).
    \end{align*} Hence again by translation invariance of $\del^{-1,can}$, $V_{(0,0,2)}\subseteq \ker(D^{-1,can}+2\id)\cap\Gamma(\m^*\otimes\lieg2)_\C.$ 

\medskip

\noindent
\underline{\textbf{Case: 2}-$\textup{Hom}(V_{(2,0,0)},\m_\C^*\otimes PS^3Q)_{\mathrm{Sp}(1)\times\mathrm{Sp}(1)}\otimes V_{(2,0,0)}$}

\noindent
The $\mathrm{Sp}(2)\times\mathrm{Sp}(1)$-representation  $V_{(2,0,0)}\cong \mathfrak{sp}(2)_\C\cong S^2P\oplus S^2Q\oplus PQ$. The subspace of $(\mathfrak{sp}(2))_\C$ isomorphic to $S^2Q, PQ$ is given by $\textup{Span}_\C\{e_1,e_2,e_3\}, \textup{Span}_\C\{e_4,e_5,e_6,e_7\}$ respectively. As before the basis of $PS^3Q\subset (\fg_2)_\C$ is denoted by $\{\mu_1,\mu_2,\dots,\mu_8\}$ and the subspace of $(\m^*\otimes \fg_2)_\C$ isomorphic to $S^2Q$ is given by $\textup{Span}\{\phi(w_1),\phi(w_2),\phi(w_3)\}$ defined above.
The subspace of $(\m^*\otimes \fg_2)_\C$ isomorphic to $PQ$ is given by $\textup{Span}\{v_1,v_2,v_3,v_4\}$ where
\begin{align*}
v_1&=e_1\otimes(\mu_1+\mu_6)-e_2\otimes (\mu_4+2\mu_7)-e_3\otimes(2\mu_3-\mu_8),\\
v_2&=e_1\otimes(\mu_2-\mu_5)-e_2\otimes (\mu_3-2\mu_8)+e_3\otimes(2\mu_4+\mu_7),\\
v_3&=-e_1\otimes(\mu_3+\mu_8)-e_2\otimes (\mu_2+2\mu_5)-e_3\otimes(2\mu_1-\mu_6),\\
v_4&=-e_1\otimes(\mu_4-\mu_7)-e_2\otimes (\mu_1-2\mu_6)+e_3\otimes(2\mu_2+\mu_5).
    \end{align*}  
Let $\{A_1,A_2\}$ be a basis of the $2$-dimensional space $\textup{Hom}(V_{(2,0,0)},(\m^*\otimes\fg_2)_\C)_{\mathrm{Sp}(1)_u\times \mathrm{Sp}(1)_d}$ and let $A=c_1A_1+c_2A_2$ for some real constants $c_1,c_2$ then we have that
\begin{align*}
A(e_1)&=c_1w_1, \ \ A(e_2)=c_1w_2, \ \ A(e_3)=c_1w_3\\
  A(e_4)&=-c_2v_2, \ \  A(e_5)=c_2v_1,\ \
  A(e_6)=-c_2v_4,\ \ 
  A(e_7)=c_2v_3
\end{align*}
and $A_1,A_2$ acts trivially on $S^2P$.

Let $s_{(A,w)}\in\Gamma(\m^*\otimes \fg_2)_\C$ be the section corresponding to $(A,w)\in \textup{Hom}(V_{(2,0,0)},(\m^*\otimes \fg_2)_\C)_{\mathrm{Sp}(1)\times\mathrm{Sp}(1)}\otimes \mathfrak{sp}(2)$ then $\del^{-1,can}_X(A,w)=-A(\operatorname{ad}(X)w)= A([X,w]|)$ where the Lie bracket is in the Lie algebra $\mathfrak{sp}(2)$. Using this action of $\del^{-1,can}$ we can calculate
\begin{align*}
(D^{-1,can}(A,e_1))(eH)&= \sum_{i=1}^7e_i\cdot \del^{-1,can}_{e_i}(A,e_1)(eH) = -\sum_{i=1}^7e_i\cdot A([e_i,e_1]|)\\
&= -\frac 23(-e_2\cdot A(e_3)+e_3\cdot A(e_2)+e_4\cdot A(e_5)-e_5\cdot A(e_4)+e_6\cdot A(e_7)-e_7\cdot A(6))\\
&= -\frac 23(c_1(-e_2\cdot w_3+e_3\cdot w_2)+c_2(e_4\cdot v_1 -e_5\cdot (-v_2)+e_6\cdot v_3-e_7\cdot (-v_4)))\\
&=\frac{4c_1-6c_2}{3}w_1=\frac{4c_1-6c_2}{3}A_1(e_1).
\end{align*}
By doing similar computations we get that
\begin{align*}
(D^{-1,can}(A,f_i))(eH)&=0, \ \ \ i=1,2,3,\\
(D^{-1,can}(A,e_i))(eH)&= \frac{4c_1-6c_2}{3}A_1(e_i), \ \ \ i=1,2,3,\\
   (D^{-1,can}(A,e_i))(eH)&= -\frac{20c_1+6c_2}{9}A_2(e_i), \ \ \ i=4,5,6,7.
\end{align*}

Therefore the subspace of $\textup{Hom}(V_{(2,0,0)},(\m^*\otimes \fg_2)_\C)_{\mathrm{Sp}(1)\times\mathrm{Sp}(1)}$ in the $\ker(D^{-1,can}+2\id)$ is given by the condition $c_2=\frac{5}{3}c_1$ and is thus $1$-dimensional. Therefore $V_{(2,0,0)}$ occurs in the $\ker(D^{-1,can}+2\id)\cap\Gamma(\m^*\otimes\fg_2)_\C$ with multiplicity $1$. 

\begin{rem}\label{eigenvalueeigthby3}
We can immediately see from above that the only other possible eigenvalue for which $\mathfrak{sp}(2)$ is an eigenspace of $D^{-1,can}$ is $-\frac{8}{3}$ for $c_2=-\frac{2}{3}c_1$. This shows that not all spaces in $\ker((D^{-1/3,can})^2-\frac{49}{9}\id)$ are in $\ker(D^{-1,can}+2\id)$.
\end{rem}
\subsubsection{ $\frac{\mathrm{SU}(3)\times\mathrm{SU}(2)}{\mathrm{SU}(2)\times \mathrm{U}(1)}$}
As before let $\{e_i,i=1\ldots7\}$ be an orthonormal basis of $\m^*$ with respect to $g$. If we define  $I=\begin{pmatrix}i&0\\0&-i\end{pmatrix}, J=\begin{pmatrix}0&-1\\1&0\end{pmatrix}, K=\begin{pmatrix}0&i\\i&0\end{pmatrix}$ we have

\begin{align*}
  e_1 &:= \frac{1}{3}\left(\begin{pmatrix}
2I&0\\0&0
\end{pmatrix},-3I\right), \ \ \ e_2:= \frac{1}{3}\left(\begin{pmatrix}
2J&0\\0&0
\end{pmatrix},-3J\right), \ \ \ e_3:= \frac{1}{3}\left(\begin{pmatrix}
2K&0\\0&0
\end{pmatrix},-3K\right),
\end{align*}
\begin{align*}
e_4&:= \frac{\sqrt{5}}{3}\left(\begin{pmatrix}
0&0&\sqrt{2}\\0&0&0\\-\sqrt{2}&0&0
\end{pmatrix},0\right), \ \ \ e_5:= \frac{\sqrt{5}}{3}\left(\begin{pmatrix}
0&0&\sqrt{2}i\\0&0&0\\\sqrt{2}i&0&0
\end{pmatrix},0\right), \\ e_6&:= \frac{\sqrt{5}}{3}\left(\begin{pmatrix}
0&0&0\\0&0&\sqrt{2}\\0&-\sqrt{2}&0
\end{pmatrix},0\right), \ \ \ e_7:=\frac{\sqrt{5}}{3} \left(\begin{pmatrix}
0&0&0\\0&0&\sqrt{2}i\\0&\sqrt{2}i&0
\end{pmatrix},0\right).  
\end{align*} 
With respect to this basis the nearly $\G2$ structure $\g2$ is given by \begin{align*}
 \g2&=e_{123}+e_{145}-e_{167}+e_{246}+e_{257}+e_{347}-e_{356}. 
\end{align*} As an $\mathrm{SU}(2)\times \mathrm{U}(1)$ representation, $\m^*_\C\cong S^2W\oplus WF(3)\oplus WF(-3)$ where \begin{align*}
    S^2W &= \textup{Span}\{e_1,e_2,e_3\},\ \ \ 
    WF(3)=\textup{Span}\{e_4-ie_5,e_6-ie_7\},\ \ \ WF(-3)=\textup{Span}\{e_4+ie_5,e_6+ie_7\}.
\end{align*} From our previous work we know that the canonical connection has no deformations as an $\mathrm{SU}(2)\times\mathrm{U}(1)$ connection so we only have to consider the case $E=\lieg2$.

As an $\mathrm{SU}(2)\times \mathrm{U}(1)$ representation, $(\mathfrak{g}_2)_\C\cong S^3W(F(3)\oplus F(-3))\oplus S^2W\oplus F(6)\oplus F(-6)$. 
We have already seen that $S^2W$ gives rise to no deformations. From previous calculations we know that $\ker((D^{-1/3,can})^2-\frac{49}{9}\id)\cap\Gamma(\m_\C^*\otimes S^3WF(\pm 3))\cong V_{(0,0,2)}\oplus 2V_{(1,1,0)}\cong (\mathfrak{su}(2))_\C\oplus 2(\mathfrak{su}(3))_\C$ and $\Gamma(\m_\C^*\otimes F(\pm 6))\cap \ker((D^{-1/3,can})^2-\frac{49}{9}\id)\cong V_{(1,1,0)}$ respectively.
Therefore there are $6$ subspaces of $\Gamma(\m^*\otimes\lieg2)$ to consider here.
\medskip

\noindent
\underline{\textbf{Case: 1}-$\textup{Hom}(V_{(0,0,2)},\m_\C^*\otimes S^3WF(3))_{\mathrm{SU(2)\times U(1)}}\otimes V_{(0,0,2)}$}

\medskip
\noindent
We denote by $\{\mu_i,i=1\ldots4\}$ a basis of $S^3WF(3)$. Let $f_i,i=1\ldots3$ be the standard basis of $\mathfrak{su}(2)$ such that $[f_1,f_2]=-2f_3, [f_1,f_3]=2f_2, [f_2,f_3]=-2f_1$. Then the subspace of $WF(-3)\otimes S^3WF(3)\subset(\m^*\otimes\lieg2)_\C$ isomorphic to $(\mathfrak{su}(2))_\C$ is given by $\textup{Span}\{v_1,v_2,v_3\}$ where \begin{align*}
    v_1&= \frac{3i}{4}(e_4+ie_5)\otimes \mu_1 + (e_6+ie_7)\otimes (\frac{5i}{4}\mu_2+\mu_4),\\
    v_2&=(e_4+ie_5)\otimes (-i\mu_2+\mu_4)+(e_6+ie_7)\otimes(-i\mu_1-\mu_3),\\
    v_3&= (e_4+ie_5)\otimes (-\frac{5i}{4}\mu_1+\mu_3) -\frac{3i}{4} (e_6+ie_7)\otimes\mu_2
\end{align*}  and the space $\textup{Hom}(V_{(0,0,2)}, (\m^*\otimes\fg_2)_{\C}) = \textup{Span}\{\gamma^A\}$ where $\gamma^A$ is defined by
\begin{align*}
    \gamma^A(f_1)&=v_2, \ \ \ \gamma^A(f_2)=i(v_1-v_3),\ \ \ \gamma^A(f_3)=-2(v_1+v_3).
\end{align*}
For $i=1,2,3,$ since $e_i=(\frac{2}{3}f_i,-f_i)$ we have $[e_i,v]=-[f_i,v]$ for all $v\in\mathfrak{su}(2)$. The action is trivial for $i=4\dots 7$ since $[e_i,f_j]\notin\textup{Span}\{f_1,f_2,f_3\}$. We can thus calculate 
\begin{align*}
    D^{-1,can}(\gamma^A,f_1)(eH)&= \sum_{i=1}^7 e_i\cdot \del^{-1,can}_{e_i}(\gamma^A,f_1)(eH)\\ 
    &=e_2\cdot \gamma^A(2f_3)-e_3\cdot \gamma^A(2f_2) \\
    &=-(4e_2\cdot (v_1+v_3) +2i e_3\cdot (v_1-v_3)) \\
    &=-2v_2 = -2 \ \gamma^A(f_1).
\end{align*} Hence $\textup{Hom}(V_{(0,0,2)},\m_\C^*\otimes S^3WF(3))|_{\mathrm{Sp(1)\times Sp(1)}}\otimes V_{(0,0,2)}\subseteq \ker(D^{-1,can}+2\id)$.
\vspace{0.5cm}

\noindent
\underline{\textbf{Case: 2}-$\textup{Hom}(V_{(1,1,0)},\m_\C^*\otimes S^3WF(3))_{\mathrm{SU(2)\times U(1)}}\otimes V_{(1,1,0)}$}

\noindent
Let a basis of the subspace of $ V_{(1,1,0)}\cong(\mathfrak{su}(3))_\C$ isomorphic to $S^2W\cong (\mathfrak{su}(2))_\C$ be given by 
\begin{align*}
    p_1&:=\begin{pmatrix}I&0\\0&0\end{pmatrix}, \ \ \ p_2:=\begin{pmatrix}J&0\\0&0\end{pmatrix}, \ \ \ p_3:=\begin{pmatrix}K&0\\0&0\end{pmatrix}.
\end{align*} 
where $I,J,K$ are defined previously. Then $[p_1,p_2]=-2p_3,[p_1,p_3]=2p_2, [p_2,p_3]=-2p_1$. The basis of $\m_\C^*\otimes S^3WF(3)\subset \m_\C^*\otimes  \lieg2$ isomorphic to $S^2W$ is given by $\textup{Span}\{w_1,w_2,w_3\}$ where
\begin{align*}
    w_1&=(e_4+ie_5)\otimes\frac{\mu_2+i\mu_3}{2}+(e_6+ie_7)\otimes\frac{\mu_1-i\mu_4}{2},\\
    w_2&=(e_4+ie_5)\otimes\frac{\mu_4-2i\mu_1}{2}+(e_6+ie_7)\otimes\frac{\mu_3-2i\mu_2}{2},\\
    w_3&=-(e_4+ie_5)\otimes\frac{\mu_1+2i\mu_4}{2}+(e_6+ie_7)\otimes\frac{\mu_2-2i\mu_3}{2}.
\end{align*}

Since $(\mathfrak{su}(3))_\C=\m_\C\oplus\C$, the subspace of $(\mathfrak{su}(3))_\C$ isomorphic to $WF(3)$ is given by $\textup{Span}_\C\{e_4-ie_5,e_6-ie_7\}$. The subspace of $S^2W\otimes S^3WF(3)\subset(\m^*\otimes \lieg2)_\C$ isomorphic to $WF(3)$ is given by $\textup{Span}\{u_1,u_2\}$ where 
\begin{align*}
    u_1&=ie_1\otimes\frac{\mu_2+i\mu_3}{2}+e_2\otimes\frac{2\mu_1+i\mu_4}{2}-ie_3\otimes\frac{\mu_1+2i\mu_4}{2},\\
    u_2&=ie_1\otimes\frac{\mu_1-i\mu_4}{2}+e_2\otimes\frac{2\mu_2-i\mu_3}{2}+ie_3\otimes\frac{\mu_2-2i\mu_3}{2}
\end{align*}
 If we denote the space $\textup{Hom}(V^{(1,1,0)},\m_\C^*\otimes S^3WF(3))$ and $\textup{Hom}(V^{(1,1,0)},\m_\C^*\otimes S^3WF(3))$  by $\textup{Span}\{A_1\}$, $\textup{Span}\{A_2\}$ respectively then \begin{align*}
    A_1(p_i)&=w_i, \ \ \ i=1,2,3,\\
    A_2(e_4-ie_5)&=u_1, \ \ A_2(e_6-ie_7)=u_2.
\end{align*}

Define $A=c_1A_1+c_2A_2$ for some constants $c_1,c_2$. We need to find the conditions on $c_1,c_2$ such that $(A,w)\in\Gamma(\m^*\otimes S^3WF(3))\cap\ker(D^{-1,can}+2\id)$ for all $w\in\mathfrak{su}(3)$. 

Let $s_{(A,w)}$ be the section corresponding to $(A,w)$. Then for any vector field $X$, $\del^{-1,can}_X(A,w)=-A(\operatorname{ad}(X)w)= A([X,w]|)$ where the Lie bracket is in the Lie algebra $\mathfrak{su}(3)$. Using this action of $\del^{-1,can}$ we can calculate 
\begin{align*}
    D^{-1,can}(A,p_1)(eH)&= \sum_{i=1}^7 e_i\cdot \del^{-1,can}_{e_i}(A,p_1)(eH)\\ 
    &= -(\frac{2}{3}(-e_2\cdot A(2p_3)+e_3\cdot A(2p_2))e_4\cdot A(-e_5)+e_5\cdot A(e_4)+e_6\cdot A(e_7)+e_7\cdot A(e_6)) \\
    &=-\frac{2c_1}{3}(-e_2\cdot w_1 + e_3\cdot w_2)-c_2(-e_4\cdot i\frac{u_1}{2}+e_5\cdot \frac{u_1}{2}+e_6\cdot i\frac{u_2}{2} -e_7\cdot\frac{u_2}{2}) \\
    &=\frac{4c_1+3ic_2}{3}w_1=\frac{4c_1+3ic_2}{3}A_1(e_1).
\end{align*}

The operator $D^{-1,can}$ acts trivially on the subspaces of $(\mathfrak{su}(3))_\C$ isomorphic to $\C$ and $WF(-3)$. On the remaining subspaces we can compute the action of the Dirac operator as \begin{align*}
   D^{-1,can}(A,p_1)(eH)&= \frac{4c_1+3ic_2}{3}A_1(e_i), \ \ \ i=1,2,3,\\
    D^{-1,can}(A,e_4-ie_5)(eH)&= \frac{20c_1-3ic_2}{9}A_2(e_4-ie_5),\\
     D^{-1,can}(A,e_6-ie_7)(eH)&= \frac{20c_1-3ic_2}{9}A_2(e_6-ie_7).
\end{align*} 
Thus for any $w\in(\mathfrak{su}(3))_\C$, $(A,w)\in\ker(D^{-1,can}+2\id)$ if and only if $c_2=\frac{10i}{3}c_1$. Thus only one copy of $\mathfrak{su}(3)$ lies in $\ker(D^{-1,can}+2\id)$.

Note that similarly to Remark \ref{eigenvalueeigthby3} here also for $c_2=-\frac{4i}{3}c_1$, $(A,w)\in\ker(D^{-1,can}-\frac{8}{3}\id)$. 
\medskip

\noindent
\underline{\textbf{Case: 3}-$\textup{Hom}(V_{(0,0,2)},\m_\C^*\otimes S^3WF(-3))_{\mathrm{Sp(1)\times Sp(1)}}\otimes V_{(0,0,2)}$}

\medskip
\noindent
Let $f_i,i=1\ldots3$ be as before and denote by $\{\nu_i,i=1\ldots4\}$ a basis of $S^3WF(-3)$. Then the subspace of $WF(3)\otimes S^3WF(-3)$ isomorphic to $S^2W$ is given by $\textup{Span}\{w_1,w_2,w_3\}$ where \begin{align*}
    w_1&= (e_4-ie_5)\otimes (\frac{-3i}{4}\nu_1 )+ (e_6-ie_7)\otimes (\frac{-5i}{4}\nu_2+\nu_4),\\
    w_2&=(e_4-ie_5)\otimes (i\nu_2+\nu_4)+(e_6-ie_7)\otimes(i\nu_1-\nu_3),\\
    w_3&= (e_4-ie_5)\otimes (\frac{5i}{4}\nu_1+\nu_3) + (e_6-ie_7)\otimes(\frac{3i}{4}\nu_2)
\end{align*}  and the space $\textup{Hom}(V_{(0,0,2)}, (\m_C^*\otimes S^3WF(-3))) = \textup{Span}\{\gamma^B\}$ where $\gamma^B$ is defined by \begin{align*}
    \gamma^B(f_1)&=\frac{i}{2}w_2, \ \ \ \gamma^B(f_2)=\frac{1}{2}(w_1-w_3),\ \ \ \gamma^B(f_3)=-i(w_1+w_3).
\end{align*} The action of $e_i,i=1\ldots7$ on $f_j,j=1\ldots3$ is the same as Case 1 and thus we can calculate $D^{-1,can}(\gamma^B,f_1)$ as \begin{align*}
    D^{-1,can}(\gamma^B,f_1)(eH)&= \sum_{i=1}^7 e_i\cdot \del^{-1,can}_{e_i}(\gamma^B,f_1)(eH)\\ 
    &= e_2\cdot \gamma^B(2f_3)-e_3\cdot \gamma^B(2f_2) \\
    &=-2ie_2\cdot (w_1+w_3) - e_3\cdot (w_1-w_3) \\
    &=-iw_2=-2\ \gamma^B(f_1).
    \end{align*}
    This implies $V_{(0,0,2)}\subseteq \ker(D^{-1,can}+2\id)\cap\Gamma(\m^*\otimes\fg_2)_\C$.
\medskip

\noindent
\underline{\textbf{Case: 4}-$\textup{Hom}(V_{(1,1,0)},\m_\C^*\otimes S^3WF(-3))_{\mathrm{SU(2)\times U(1)}}\otimes V_{(1,1,0)}$}

\medskip
\noindent
As above in Case 2, let a basis of the subspace of $(\mathfrak{su}(3))_\C$ isomorphic to $S^2W\cong \mathfrak{su}(2)$ be given by $\textup{Span}\{p_1,p_2,p_3\}$.  The basis of $\m_\C^*\otimes S^3WF(-3)\subset (\m^*\otimes  \lieg2)_\C$ isomorphic to $S^2W$ is given by $\textup{Span}\{w_1,w_2,w_3\}$ where 
\begin{align*}
   w_1&=(e_4-ie_5)\otimes\frac{\nu_2-i\nu_3}{2}+(e_6-ie_7)\otimes\frac{\nu_1+i\nu_4}{2},\\
    w_2&=(e_4-ie_5)\otimes\frac{\nu_4+2i\nu_1}{2}+(e_6-ie_7)\otimes\frac{\nu_3+2i\nu_2}{2},\\
    w_3&=-(e_4-ie_5)\otimes\frac{\nu_1-2i\nu_4}{2}+(e_6-ie_7)\otimes\frac{\nu_2+2i\nu_3}{2}. 
\end{align*}
The subspace of $(\mathfrak{su}(3))_\C$ isomorphic to $WF(-3)$ is given by $\textup{Span}\{e_4+ie_5,e_6+ie_7\}$. The subspace of $S^2W\otimes S^3WF(-3)\subset(\m^*\otimes \lieg2)_\C$ isomorphic to $WF(-3)$ is given by $\textup{Span}_\C\{u_1,u_2\}$ where 

\begin{align*}
    u_1&=-ie_1\otimes\frac{\nu_2-i\nu_3}{2}+e_2\otimes\frac{2\nu_1-i\nu_4}{2}+ie_3\otimes\frac{\nu_1-2i\nu_4}{2},\\
    u_2&=-ie_1\otimes\frac{\nu_1+i\nu_4}{2}+e_2\otimes\frac{2\nu_2+i\nu_3}{2}-ie_3\otimes\frac{\nu_2+2i\nu_3}{2}.
\end{align*}
Again if we denote the spaces $\textup{Hom}(V{(1,1,0)},\m_\C^*\otimes S^3WF(-3))$ and $\textup{Hom}(V^{(1,1,0)},\m_\C^*\otimes S^3WF(-3))$ by $\textup{Span}\{B_1\}$, $\textup{Span}\{B_2\}$ respectively then \begin{align*}
    B_1(p_i)&=w_i, \ \ \ i=1,2,3,\\
    B_2(e_4+ie_5)&=u_1, \ \ B_2(e_6+ie_7)=u_2.
\end{align*}
Again as before we need to find the conditions on $c_1,c_2$ such that $(B=c_1B_1+c_2B_2,w)\in\ker(D^{-1,can}+2\id)$ for all $w\in(\mathfrak{su}(3))_\C$. 
By similar computations as Case 2, we can calculate,
\begin{align*}
   D^{-1,can}(B,p_1)(eH)&= \sum_{i=1}^7 e_i\cdot \del^{-1,can}_{e_i}(B,p_1)(eH)\\ 
    &= -(\frac{2}{3}(-e_2\cdot B(2p_3)+e_3\cdot B(2p_2))+e_4\cdot B(-e_5)+e_5\cdot B(e_4)+e_6\cdot B(e_7)+e_7\cdot B(e_6)) \\
    &=-\frac{2c_1}{3}(-e_2\cdot w_1 + e_3\cdot w_2)-c_2(-e_4\cdot i\frac{u_1}{2}+e_5\cdot \frac{u_1}{2}+e_6\cdot i\frac{u_2}{2} -e_7\cdot\frac{u_2}{2}) \\
    &=\frac{4c_1-3ic_2}{3}w_1=\frac{4c_1-3ic_2}{3}B_1(e_1).
\end{align*}
Once can check that $D^{-1,can}$ acts trivially on the subspaces of $(\mathfrak{su}(3))_\C$ isomorphic to $\C, WF(3)$ and 
\begin{align*}
   D^{-1,can}(A,p_1)(eH)&= \frac{4c_1-3ic_2}{3}B_1(e_i), \ \ \ i=1,2,3,\\
    D^{-1,can}(A,e_4+ie_5)(eH)&= \frac{20c_1+3ic_2}{9}B_2(e_4+ie_5),\\
     D^{-1,can}(A,e_6+ie_7)(eH)&= \frac{20c_1+3ic_2}{9}B_2(e_6+ie_7).
\end{align*} 
Thus for all $w\in(\mathfrak{su}(3))_\C$, $(B,w)\in\ker(D^{-1,can}+2\id)$ if and only if $c_2=-\frac{10i}{3}c_1$ which proves that only one copy of $\mathfrak{su}(3)$ lies in $\ker(D^{-1,can}+2\id)$ in this case as well. It immediately follows from the given action that for $c_2=\frac{4i}{3}c_1$, $(B,w)\in\ker(D^{-1,can}-\frac{8}{3}\id)$.
\medskip

\noindent
\underline{\textbf{Case: 5}-$\textup{Hom}(V_{(1,1,0)},\m_\C^*\otimes F(6))_{\mathrm{SU(2)\times U(1)}}\otimes V_{(1,1,0)}$} 
    
    \medskip
    \noindent
    From before we know that the subspace of $(\mathfrak{su}(3))_\C$ isomorphic to $WF(3)$ is given by $\textup{Span}\{e_4-ie_5,e_6-ie_7\}$. if we denote by $\mu$ a basis vector for the $1$-dimensional representation $F(6)$, the subspace of $\m_\C^*\otimes F(6)$ isomorphic to $WF(3)$ is given by $\textup{Span}_\C\{(e_4+ie_5)\otimes \mu, (e_6+ie_7)\otimes\mu\}$. Let Hom$(V_{(1,1,0)},\m_\C^*\otimes F(6))=\textup{Span}\{\alpha\}$. 
    We can define $\alpha$ as follows,
    \begin{align*}
        \alpha(e_4-ie_5)&=(e_6+ie_7)\otimes \mu , \ \ \  \alpha(e_6-ie_7)=-(e_4+ie_5)\otimes \mu.
    \end{align*} 
    Since $V_{(1,1,0)}$ is isomorphic to the adjoint representation $(\mathfrak{su}(3))_\C$, $\del^{-1,can}_X(\alpha,v)(eH)= -\alpha([X,v])$ where $X\in\m$, $v\in WF(3)\subset \mathfrak{su}(3)$. Thus we can compute \begin{align*}
        D^{-1,can}(\alpha,e_4-ie_5)(eH)&= \sum_{i=1}^7 e_i\cdot \del^{-1,can}_{e_i}(\alpha,e_4-ie_5)(eH)\\ 
    &= -(e_1\cdot \alpha(\frac{2i}{3}(e_4-ie_5))+e_2\cdot \alpha(\frac{2}{3}(e_6-ie_7))+e_3\cdot\alpha(\frac{2i}{3}(e_6-ie_7))) \\
    &=-\frac 23 (ie_1\cdot(e_6+ie_7)\otimes \mu-e_2\cdot (e_4+ie_5)\otimes \mu-ie_3\cdot  (e_4+ie_5)\otimes \mu) \\
    &= -2(e_6+ie_7)\otimes\mu= -2\alpha(e_4-ie_5). 
    \end{align*}
 Therefore $\textup{Hom}(V_{(1,1,0)},\m_\C^*\otimes F(6))_{\mathrm{SU(2)\times U(1)}}\otimes V_{(1,1,0)}\subset \ker(D^{-1,can}+2\id)$ and thus lies in the deformation space.
\vspace{0.3cm}
 
\noindent
    \underline{ \textbf{Case: 6}-$\textup{Hom}(V_{(1,1,0)},\m_\C^*\otimes F(-6))_{\mathrm{SU(2)\times U(1)}}\otimes V_{(1,1,0)}$ } 
    
    \medskip
    \noindent
    The subspace of $(\mathfrak{su}(3))_\C$ isomorphic to $WF(-3)$ is given by $\textup{Span}_\C\{e_4+ie_5,e_6+ie_7\}$. We denote  $F(-6)=\textup{Span}\{\nu\}$. Then $\m_\C^*\otimes F(-6)$ isomorphic to $WF(-3)$ is given by $\textup{Span}\{(e_4-ie_5)\otimes \nu, (e_6-ie_7)\otimes\nu\}$. Let Hom$(V_{(1,1,0)},\m_\C^*\otimes F(-6))=\textup{Span}\{\beta\}$ then \begin{align*}
        \beta(e_4+ie_5)&=-(e_6-ie_7)\otimes \nu , \ \ \  \beta(e_6+ie_7)=(e_4-ie_5)\otimes \nu.
    \end{align*} Since $V_{(1,1,0)}\cong (\mathfrak{su}(3))_\C$, $\del^{-1,can}_X(\beta,v)(eH)= -\beta([X,v])$ where $X\in\m$, $v\in WF(-3)\subset (\mathfrak{su}(3))_\C$. Thus we can compute \begin{align*}
        D^{-1,can}(\beta,e_4+ie_5)(eH)&= \sum_{i=1}^7 e_i\cdot \del^{-1,can}_{e_i}(\beta,e_4+ie_5)(eH)\\ 
    &= -(e_1\cdot \beta(\frac{-2i}{3}(e_4+ie_5))+e_2\cdot \beta(\frac{2}{3}(e_6+ie_7))+e_3\cdot\beta(\frac{-2i}{3}(e_6+ie_7))) \\
    &=-\frac 23 (ie_1\cdot(e_6-ie_7)\otimes \nu+e_2\cdot (e_4-ie_5)\otimes \nu-ie_3\cdot  (e_4-ie_5)\otimes \nu) \\
    &= 2(e_6-ie_7)\otimes\nu= -2\beta(e_4+ie_5),
    \end{align*}
which by translation invariance of $D^{-1,can}$ shows that $\textup{Hom}(V_{(1,1,0)},\m_\C^*\otimes F(-6))_{\mathrm{SU(2)\times U(1)}}\otimes V_{(1,1,0)}\subset \ker(D^{-1,can}+2\id)$.

\medskip

\noindent
\textbf{Summary of the results}: For three out of the four considered normal homogeneous spaces the canonical connection is rigid as an $H$-connection. As a $\G2$-connection the canonical connection has a non-trivial infinitesimal deformation space except for the round $S^7$. Summing up all the results found above we get the following theorem.

\begin{thm}\label{thm:deformspace}
    The infinitesimal deformation space for the canonical connection on the four normal homogeneous nearly $\G2$  spaces $G/H$ when the structure group is  $H$ or $\G2$ is isomorphic to 
    \bgroup
\def\arraystretch{1}
    \begin{table}[H]\label{table2}
\centering
\begin{tabular}{c|c|c}
\hline
\raisebox{-2pt}{$G/H$}  & \multicolumn{2}{c}{Structure group}\\ \cline{2-3}

    &$H$& $\G2$\\
\hline 
$\mathrm{Spin}(7)/\G2$& $0$ & $0$\\ &&\\
$\mathrm{SO}(5)/\mathrm{SO}(3)$ &$0$ & $\mathfrak{so}(5)$\\ &&\\
$\cfrac{\mathrm{Sp}(2)\times \mathrm{Sp}(1)}{\mathrm{Sp}(1)\times \mathrm{Sp}(1)}$ & $ V^{(0,1)}_\R$ & $\mathfrak{sp}(2)\oplus\mathfrak{sp}(1)\oplus V^{(0,1)}_\R$\\ &&\\
$\cfrac{\mathrm{SU}(3)\times \mathrm{SU}(2)}{\mathrm{SU}(2)\times \mathrm{ U}(1)}$&$0$& $2\mathfrak{su}(2) \oplus 4\mathfrak{su}(3)$\\
\hline
\end{tabular}
\label{tabledeformationspace}
\end{table}
\egroup
where $V^{(0,1)}$ is the unique $5$-dimensional complex irreducible $\textup{Sp(2)}$-representation.
\end{thm}

\subsection{Integrability of the deformation spaces}
We now describe some of the deformation spaces obtained in Theorem \ref{thm:deformspace}.

Let $M$ be a nearly $\G2$ manifold. We first observe that for the structure group $\G2$ the space of non-trivial deformations in Theorem \ref{thm:deformspace} are either isomorphic to or contains as a subrepresentation one or multiple copies of the Lie algebra $\fg$ of the automorphism group $G$. A vector field $X$ on $M$ preserves the $\G2$-structure $\g2$ if $\mathcal{L}_X\g2=0$. We denote by $\mathcal{X}$ the space of vector fields on $M$ preserving the $\G2$-structure.
Since the $\G2$-structure on $G/H$ is $G$ invariant, the space $\fg$ is contained in $\mathcal{X}$. Note that if $X\in\mathcal{X}$ then $\mathcal{L}_X\psi=\mathcal{L}_Xg=0$.

Given a parallel section in $\Gamma(\mathfrak{\fg_2}(T^* M) \otimes \textup{Ad}_\P) \subset \Gamma(\Lambda^2T^* M\otimes \textup{Ad}_\P)$, one can define an operator that associates to each vector field in $\mathcal{X}$ an infinitesimal deformation of a $\G2$ instanton on $M$. Such an operator was defined in \cite{bendef} where a similar situation arises when one computes the deformation space of the canonical connection on the homogeneous $6$-dimensional nearly K\"ahler manifolds. 

The next proposition asserts that if we fix a section $\xi\in \Gamma(\mathfrak{\fg_2}(T^* M) \otimes \textup{Ad}_\P) \subset \Gamma(\Lambda^2T^* M\otimes \textup{Ad}_\P)$, then for any vector field $X\in\mathcal{X}$ on $M$ the $\textup{Ad}_\P$ valued $1$-form $\epsilon_X=i_X\xi\in \Gamma(T^*M\otimes\textup{Ad}_\P)$ defines an infinitesimal deformation of the nearly $\G2$ instanton $A$ in the sense of  \eqref{perturbation}. The proof of the proposition follows verbatim from the proof of \cite[Proposition 9]{bendef} and is hence omitted. 

\begin{prop}\label{prop:Liegdeformations}
Let $A$ be an instanton on a principal $G$-bundle $\P$ over a nearly $\G2$ manifold $M$. Let $\xi\in \Gamma(\mathfrak{\fg_2}(T^* M) \otimes \textup{Ad}_\P) \subset \Gamma(\Lambda^2T^* M\otimes \textup{Ad}_\P)$ such that $\del^{-1,A}\xi=0$. Then for any $X\in\mathcal{X}$, $\epsilon_X=i_X\xi\in \Gamma(T^*M\otimes\textup{Ad}_\P)$ satisfies the linearised instanton condition 
\begin{align*}
    d^A\epsilon_X\cdot\eta = 0.
\end{align*}
\end{prop}

The above proposition implies that for each $\xi\in \Gamma(\fg_2(T^*M)\otimes\mathrm{Ad}_\P)$ such that $\del^{-1,A}\xi=0$, there is a copy of $\fg$ in the deformation space of $A$. Thus the multiplicity of $\fg$ in the deformation space can be found by identifying the parallel sections of $\fg_2(T^*M)\otimes \mathrm{Ad}_\P$. On $G/H$, when we see $\P$ as a $\G2$-bundle, every parallel section of $\fg_2(T^*M)\otimes \mathrm{Ad}_\P$ corresponds to an $H$-invariant element of the $H$-representation $\fg_2\otimes\fg_2$ (since $\mathrm{Ad}_\P\cong \fg_2$) and vice-versa. The number of linearly independent $H$-invariant elements of $\fg_2\otimes\fg_2$ is equal to the multiplicity of the trivial $H$-representation in $\fg_2\otimes\fg_2$.

Observe that since $A$ is a $\G2$ instanton, the curvature $F_A\in \Gamma(\fg_2(T^*M)\otimes\mathrm{Ad}_\P)$. When $A=\del^{can}$ is the canonical connection on $G/H$ and $F$ is the curvature, $\del^{-1,can}F=0$ since $\mathrm{Hol}(\del^{can})\subseteq\G2$. Hence by Proposition \ref{prop:Liegdeformations} for every $X\in\mathcal{X}$, $\epsilon_X=i_XF$ defines an infinitesimal deformation of $A=\del^{can}$. 
Using the Bianchi identity and the definition of $\epsilon_X$ we have that 
\begin{align*}
    d^A\epsilon_X&= d\epsilon_X + [A,\epsilon_X] \\
    &= \mathcal{L}_XF - i_XdF +[A,\epsilon_X] \\
    &=\mathcal{L}_XF +i_X[A,F] +[A,\epsilon_X]\\
    &=\mathcal{L}_XF +[i_XA,F].
\end{align*}

Since under the action of a gauge transformation $\phi$, the curvature $F$ transforms by $\phi F \phi^{-1}$, for all $X\in \mathcal{X}$ there exists an infinitesimal gauge transformation $\phi_X$ such that 
\begin{align*}
   \mathcal{L}_XF&=[\phi_X,F].
\end{align*}
Also $i_XA$ defines an infinitesimal gauge transformation, hence $[\phi_X+i_XA,F]$ is an action of an infinitesimal gauge transformation on $F$.
Thus for all $X\in\mathcal{X}$ the deformations $i_XF$ arise from gauge transformations and hence do not descend to the moduli space.

Thus for finding the multiplicity of $\fg$ in the deformation space (modulo gauge transformations) of the canonical connection on $G/H$, we need to find the number of trivial sub-representations of $H$ in $\fg_2\otimes\fg_2$ apart from the one that corresponds to $F$. In all the cases we consider, the trivial $H$-representation occurs with multiplicity one in the subrepresentation  $\fg_2\otimes\h$ of  $\fg_2\otimes\fg_2$. The trivial representation coming from $\fg_2\otimes\h$ corresponds to the $H$-invariant element $F$.   We deal with the four normal homogeneous spaces one by one. The notation for the irreducible $H$-representations in all the cases is the same as used in \textsection\ref{evdsquare}.
\begin{enumerate}
\item[--]$\mathrm{Spin}(7)/\G2$

Since $H=\G2$, in this case $\fg_2$ is the irreducible adjoint representation. There is only one trivial $\fg_2$-subrepresentation of $\fg_2\otimes\fg_2$ which corresponds to $F$. Hence $\fg=\mathfrak{spin}(7)$ does not occur in the deformation space as proved in Theorem \ref{thm:deformspace}. 

\medskip
    \item[--]${\mathrm{SO}(5)}/ \mathrm{SO}(3)$
    
    In this case, as an $\mathfrak{so}(3)$ representation, $\fg_2$ decomposes into two irreducible $\mathfrak{so}(3)$-representations, the adjoint representation $S^2\C^2$, and the $11$-dimensional representation $S^{10}\C^2$. Thus as $\mathfrak{so}(3)$-representation
    \begin{align*}
        \fg_2\otimes\fg_2&= (S^2\C^2\otimes S^2\C^2)\oplus 2(S^2\C^2\otimes S^{10}\C^2)\oplus ( S^{10}\C^2\otimes  S^{10}\C^2).
    \end{align*}
    There are two trivial components occurring in the above decomposition from $S^2\C^2\otimes S^2\C^2$ and $S^{10}\C^2\otimes  S^{10}\C^2$ respectively but since the component coming from $S^2\C^2\otimes S^2\C^2$ corresponds to $F$, up to gauge transformations the deformation space of the canonical connection on $\mathrm{SO}(5)/\mathrm{SO}(3)$ contains only one copy of $\fg=\mathfrak{so}(5)$ as shown in Theorem \ref{thm:deformspace}.
    
\medskip
\item[--]$\mathrm{Sp}(2)\times\mathrm{Sp}(1)/\mathrm{Sp}(1)\times\mathrm{Sp}(1)$

As an $\mathfrak{sp}(1)\oplus\mathfrak{sp}(1)$-representation, 
\begin{align*}
    \fg_2&=S^2P\oplus S^2Q\oplus PS^3Q.
\end{align*}
The trivial $\mathfrak{sp}(1)\oplus\mathfrak{sp}(1)$ components of $\fg_2\otimes\fg_2$ coming from $S^2P\otimes S^2P$ and $S^2Q\otimes S^2Q$ correspond to $F$ and thus can be ignored. The only trivial component that corresponds to an infinitesimal deformation modulo gauge transformations comes from $PS^3Q\otimes PS^3Q$, hence again $\fg=\mathfrak{sp}(2)\oplus\mathfrak{sp}(1)$ appears with multiplicity 1 in the deformation space which is consistent with our findings in Theorem \ref{thm:deformspace}.

\medskip
\item[--]$\mathrm{SU}(3)\times\mathrm{SU}(2)/\mathrm{SU}(2)\times\mathrm{U}(1)$

The decomposition of $\fg_2$ as an $\mathfrak{su}(2)\oplus\mathfrak{u}(1)$-representation is given by
\begin{align*}
    \fg_2&=S^2W\oplus \C \oplus S^3WF(3)\oplus S^3WF(-3) \oplus F(6)\oplus F(-6).
\end{align*} The first two components in the above decomposition correspond to $\h$ hence the only trivial $\mathfrak{su}(2)\oplus\mathfrak{u}(1)$-subrepresentations of $\fg_2\otimes\fg_2$ that correspond to non-trivial deformations come from the spaces $S^2WF(3)\otimes S^2WF(-3)$ and $F(6)\otimes F(-6)$. Hence as proved in Theorem \ref{thm:deformspace} the space $\fg=\mathfrak{su}(3)\oplus\mathfrak{su}(2)$ occurs in the deformation space with multiplicity 2.

The only deformation spaces left to be considered in Table \ref{tabledeformationspace} are the $\mathrm{Sp}(2)$-representation $V_{(0,1)}^\R$ for the squashed $7$-sphere and 2 copies of the $\mathrm{SU}(3)$-representation  $\mathfrak{su}(3)$ on the Aloff--Wallach space $\mathrm{SU}(3)\times\mathrm{SU}(2)/\mathrm{SU}(2)\times \mathrm{U}(1)$. 
\end{enumerate}
On the squashed $7$-sphere the canonical connection splits into two connections with $\rm{Hol}=\rm{Sp}(1)_u$ and $\rm{Sp}(1)_d$ respectively. From \textsection \ref{evdsquare} the deformations only come from the $\rm{Sp}(1)_u$ part which is the pullback of the standard instanton on $S^4$. If we view $S^4$ as the symmetric homogeneous space $\frac{\rm{Sp}(2)\times\rm{Sp}(1)}{\rm{Sp}(1)_a\times\rm{Sp}(1)_b\times\rm{Sp}(1)_c}$ and denote by $P,Q,R\cong \C^2$ the irreducible representation of the three $\rm{Sp}(1)$ factors respectively we have the orthogonal decomposition \begin{align*}
    \mathfrak{sp}(2)\oplus\mathfrak{sp}(1)= \mathfrak{sp}(1)_a\oplus\mathfrak{sp}(1)_b\oplus\mathfrak{sp}(1)_c\oplus \mathfrak{n}.
\end{align*}
As an $\mathfrak{sp}(1)_a\oplus\mathfrak{sp}(1)_b\oplus\mathfrak{sp}(1)_c$-representation 
\begin{align*}
    \mathfrak{n}\cong PQ.
\end{align*}
The squashed sphere becomes a bundle over $S^4$ by reducing to the subgroup $\rm{Sp}(1)^2$ corresponding to the identification $Q=R$ so the factor $\rm{Sp}(1)_d$ acts diagonally. The complexified tangent space of $\frac{\rm{Sp}(2)\times\rm{Sp}(1)}{\rm{Sp}(1)_u\times\rm{Sp}(1)_d}$ is then  
   \begin{align*}
       m  \cong  S^2Q + PQ.
   \end{align*}
The standard instanton on $S^4$ is the unique $\rm{Sp}(2)$-invariant ASD connection on $S^4$ with charge 1. As a bundle over $S^4$, the Levi-Civita connection induces the standard instanton on $P$. It is also the homogeneous connection on the $\rm{Spin}(4)=\rm{Sp}(1)^2$ bundle over $S^4$ obtained by left-translating the subspace $\mathfrak{n}$ in
   $\mathfrak{sp}(2)\oplus\mathfrak{sp}(1) = 3\mathfrak{sp}(1) \oplus \mathfrak{n}$ 
by $\rm{Sp}(2)\times \rm{Sp}(1)$. Thus the horizontal distribution corresponding to the standard instanton is $\mathfrak{n}$. 

On the other hand the canonical connection on the squashed $7$-sphere is the characteristic homogeneous connection defined by the horizontal distribution $\m$ in the decomposition $\mathfrak{sp}(2)\oplus\mathfrak{sp}(1) = 2\mathfrak{sp}(1) \oplus \mathfrak{m}=2\mathfrak{sp}(1)\oplus(S^2Q\oplus \mathfrak{n})$. The canonical connection on squashed $7$-sphere reduces to $\rm{Sp}(1)^2$ and preserves the horizontal distribution $D$ defined by $\mathfrak{n}$ which is stable under both $\rm{Ad}(Sp(1)^3)$ and $\rm{Ad}(\rm{Sp}(1)^2)$.

If we consider the map $$p:S^7=\frac{\rm{Sp}(2)\times\rm{Sp}(1)}{\rm{Sp}(1)_u\times\rm{Sp}(1)_d}\to S^4=\frac{\rm{Sp}(2)\times\rm{Sp}(1)}{\rm{Sp}(1)_a\times\rm{Sp}(1)_b\times\rm{Sp}(1)_c}$$ 
then the connection induced on $D$ is the pullback of the homogeneous connection defined by $n$ on $T\left(S^4\right)$ via $p$.

Let $\mathcal{M}$ be the moduli space of charge-1 instantons on $S^4$ with structure group $\rm{SU}(2)$. Then, there is a diffeomorphism from $\mathcal{M}$ to $B^5 \subset \R^5$ which to an instanton associates its center. The standard instanton on $S^4$ is the charge-1 instanton that corresponds to the center of the ball, that is to $0 \in B^5$, and is the unique homogeneous charge-1 instanton. As the name suggests, the homogeneous charge-1 instanton is invariant with respect to the $\rm{Sp}(2)$-action. The pullback of the homogeneous charge-1 instanton to the squashed $S^7$ is a $\G2$-instanton (see \cite{gonball}, \cite{clarke}). As shown in \cite{AHS-ASDonS4} the moduli space of the standard instanton on $S^4$ can be identified as a topological space and as a differentiable manifold with $\R^+ \times \mathbb{H}$ (see \cite[sec 4.1]{Donaldson-Kronheimer}). As shown above the $\rm{Sp}(1)$ part of the canonical connection on the squashed $7$-sphere is the pullback of the standard instanton, hence the deformation space of the canonical connection on the squashed $7$-sphere must contain the deformation space of the standard ASD instanton on $S^4$ and thus be at least $5$-dimensional. From Table \ref{tabledeformationspace} we know that the moduli space of the deformations of the canonical connection on the squashed $7$-sphere is exactly $5$-dimensional and hence we get the following Corollary.

\begin{thm}\label{squashed-genuine}
The deformations of the canonical connection on the squashed $7$-sphere are lifts of the deformations of the standard ASD connection on $S^4$ and are thus integrable. 
\end{thm}

As of the deformation subspace isomorphic to $2\mathfrak{su}(3)$ of the canonical connection on $\mathrm{SU}(3)\times\mathrm{SU}(2)/\mathrm{SU}(2)\times \mathrm{U}(1)$ with structure group $\G2$, the author is unaware of any such explicit description. It would be interesting to see whether these deformations are genuine.

\phantomsection \addcontentsline{toc}{section}{References}
\bibliographystyle{amsalpha}

\bibliography{deformation}
\end{document}